\definecolor{labelkey}{gray}{.8}
\definecolor{refkey}{gray}{.8}
\definecolor{darkred}{rgb}{0.9,0.1,0.1}
\definecolor{darkgreen}{rgb}{0,0.5,0}
\newcommand{\tnorm}[1]{{\left\vert\kern-0.25ex\left\vert\kern-0.25ex\left\vert #1 
    \right\vert\kern-0.25ex\right\vert\kern-0.25ex\right\vert}}
\newtheorem{theorem}{Theorem}[section]
\newtheorem{lemma}[theorem]{Lemma}
\newtheorem{proposition}[theorem]{Proposition}
\theoremstyle{remark}
\newtheorem{remark}[theorem]{Remark}
\renewenvironment{proof}[1][Proof]{ {\itshape \noindent {#1.}} }{$\Box$
\medskip}
\numberwithin{equation}{section}
\newcommand{\R}{\mathbb{R}}
\newcommand{\Z}{\mathbb{Z}}
\newcommand{\Pb}{\mathbb{P}}
\newcommand{\PP}{\mathbf{P}}
\newcommand{\E}{\mathbb{E}}
\newcommand{\F}{\mathcal{F}}
\newcommand{\B}{\mathcal{B}}
\newcommand{\G}{\mathcal{G}}
\newcommand{\U}{\mathcal{U}}
\newcommand{\eps}{\varepsilon}
\newcommand{\EE}{\mathbf{E}}
\newcommand{\bQ}{\mathbbm{Q}}
\def\blue{\textcolor{black}}
\newcommand{\bT}{\mathbb{T}}
\newcommand{\cZ}{\mathcal{Z}}
\newcommand{\cX}{\mathcal{X}}
\newcommand{\rhof}{\rho_{\mathrm{f}}}
\newcommand{\rhob}{\rho_{\mathrm{b}}}
\newcommand{\scU}{\mathscr{U}}
\newcommand{\ic}{\mathrm{ic}}
\begin{document}
\title{Jointly stationary solutions of periodic Burgers flow}

\author{Alexander Dunlap\and Yu Gu}

\address[Alexander Dunlap]{Department of Mathematics, Duke University, Durham, NC 27708, USA.}
\email{dunlap@math.duke.edu}

\address[Yu Gu]{Department of Mathematics, University of Maryland, College Park, MD 20742, USA. }
\email{yugull05@gmail.com}


\begin{abstract} 
For the one dimensional Burgers equation with a random and periodic forcing, it is well-known that there exists a family of invariant measures, each corresponding  to a different average velocity. In this paper, we consider the coupled invariant measures and study how they change as the  velocity parameter varies. We show that the derivative of the invariant measure with respect to the velocity parameter exists, and it can be interpreted as the steady state of a diffusion advected  by the Burgers flow. 
\medskip

\noindent \textsc{Keywords:} invariant measure, directed polymer, environment seen from particle 
\end{abstract}
\maketitle


\section{Introduction}

\subsection{Main result}

Consider the solutions to the stochastic Burgers equation with a family of constant initial data: for any $\theta\in\R, T\in\R$, let $\{u_{\theta}(t,x;-T)\}_{t\geq -T,x\in\R}$ be the solution to
\begin{equation}\label{e.burgers}
\begin{aligned}
\partial_t u_{\theta}(t,x;-T)&=\frac12\Delta u_{\theta}(t,x;-T)+\frac12\nabla ( u_{\theta}^2)(t,x;-T)+\nabla \xi(t,x), && t>-T, x\in\bT,\\
u_{\theta}(-T,x;-T)&=\theta.
\end{aligned}
\end{equation}
Here $\bT=\R/\Z$ is the unit one-dimensional torus (circle) and $\xi$ is a Gaussian noise that is white in time, smooth and $1$-periodic in space, and built on a probability space $(\Omega,\F,\PP)$. The covariance function of $\xi$ is given by 
\[
    \EE [\xi(t,x)\xi(s,y)]=\delta(t-s)R(x-y),
\]
where $R(\cdot)$ is a smooth and $1$-periodic function. The above equation \eqref{e.burgers} preserves the \blue{spatial integral of $u$:} for any $t>-T$, we have 
\begin{equation}\label{e.l1theta}
\int_{\bT} u_\theta(t,x;-T)dx=\theta.
\end{equation}

It is well-known from the early work of Sinai \cite{sinai} that the one-force-one-solution principle holds, namely, as $T\to\infty$, $u_\theta(t,x;-T)$ converges 
to some limit, denoted by $\mathscr{U}_\theta(t,x)$, which is the global solution to the stochastic Burgers equation with mean $\theta$. 

In this paper, we are interested in the coupled stationary solutions to the Burgers equation $\{\mathscr{U}_\theta(t,x):t\in\R,x\in\bT\}_{\theta\in\R}$, and how they behave as the parameter $\theta$ varies. One of our main results is on the differentiability of $\scU_{\theta}(t,x)$ in the parameter $\theta$:

\begin{theorem}\label{t.mainth}
(i) For any $\theta\in\R$, there exists a unique positive global solution $\tilde{g}_\theta$ to 
\begin{equation}\label{e.fk1}
    \partial_t \tilde{g}_\theta(t,x)=\frac12\Delta \tilde{g}_\theta(t,x)+\nabla(\scU_\theta\tilde{g}_\theta)(t,x), \quad\quad  t\in\R, x\in\bT,
\end{equation}
that is spacetime stationary and satisfies $\EE\, \tilde{g}_\theta(0,0)=1$, $\tilde{g}_\theta(0,0)\in L^p(\Omega)$ for any $p\in [1,\infty)$, and, with probability $1$, $\int_{\bT} \tilde{g}_\theta(t,x)dx=1$ for each $t\in\R$.

(ii) For any $\lambda\in\R$ and $t\in\R,x\in\bT$, the following identity holds almost surely:
\begin{equation}\label{e.mainidentity}
\scU_{\lambda}(t,x)-\scU_0(t,x)=\int_0^\lambda \tilde{g}_\theta(t,x)d\theta.
\end{equation}
\end{theorem}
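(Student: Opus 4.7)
The plan is to linearize the finite-$T$ Burgers flow in the initial-data parameter $\theta$, extract a stationary limit as $T\to\infty$, and then pass to the limit in the identity produced by the fundamental theorem of calculus. For each fixed $T$, classical parabolic regularity gives that $\theta\mapsto u_\theta(t,x;-T)$ is smooth, and its derivative $g_\theta^T(t,x):=\partial_\theta u_\theta(t,x;-T)$ satisfies
\[
\partial_t g=\tfrac12\Delta g+\nabla(u_\theta g),\qquad g(-T,\cdot)=1.
\]
Differentiating \eqref{e.l1theta} in $\theta$ preserves $\int_\bT g_\theta^T(t,x)\,dx=1$, spatial translation invariance of $\xi$ on $\bT$ gives $\EE g_\theta^T(t,x)=1$, and the fact that $g_\theta^T(t,\cdot)$ is the conditional pushforward density of the diffusion $dY_s=-u_\theta(s,Y_s)\,ds+dB_s$ started at $s=-T$ from the uniform law on $\bT$ ensures $g_\theta^T>0$. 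The fundamental theorem of calculus then yields
\begin{equation}\label{e.plan.ftc}
u_\lambda(t,x;-T)-u_0(t,x;-T)=\int_0^\lambda g_\theta^T(t,x)\,d\theta.
\end{equation}

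For part (i), I would combine the Cole--Hopf transform with stationary polymer partition functions. Writing $u_\theta(t,x;-T)=\theta+\nabla\log Z_\theta^T(t,x)$, where $Z_\theta^T$ solves the tilted SHE $\partial_t Z=\tfrac12\Delta Z+\theta\nabla Z+\xi Z$ with $Z_\theta^T(-T,\cdot)=1$, differentiation in $\theta$ gives $g_\theta^T=1+\nabla(\partial_\theta\log Z_\theta^T)$, which admits a directed-polymer representation as a spatial derivative of a ratio involving the forward partition function and a time-reversed backward partition function. Standard stationary SHE/polymer estimates then yield almost sure convergence $g_\theta^T\to\tilde g_\theta$ as $T\to\infty$, with the limit spacetime stationary, strictly positive, satisfying $\int_\bT\tilde g_\theta\,dx=1$, $\EE\tilde g_\theta=1$, and possessing moments in every $L^p(\Omega)$, and solving \eqref{e.fk1}. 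For uniqueness, any positive spacetime stationary solution of \eqref{e.fk1} is the density, in the quenched environment, of an invariant measure of the environment-seen-from-the-particle Markov process for the random flow $dY_s=-\scU_\theta(s,Y_s)\,ds+dB_s$; unique ergodicity of this process follows from a Doob/Harris-type argument combining non-degeneracy of the diffusion on $\bT$ with ergodicity of $\xi$ under spacetime shifts, and forces uniqueness of $\tilde g_\theta$.

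For part (ii), I send $T\to\infty$ in \eqref{e.plan.ftc}. The left-hand side converges almost surely to $\scU_\lambda(t,x)-\scU_0(t,x)$ by Sinai's one-force-one-solution principle. For the right-hand side, the pointwise convergence $g_\theta^T\to\tilde g_\theta$ together with uniform-in-$T$ $L^p$ bounds on $g_\theta^T$ that are locally integrable in $\theta$ (from the polymer representation) justifies, by dominated convergence, interchanging the limit with the $\theta$-integral---first in probability, then almost surely after extracting a subsequence. This produces \eqref{e.mainidentity}.

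The step I expect to be the main obstacle is the uniqueness of $\tilde g_\theta$: establishing unique ergodicity for the environment-seen-from-the-particle process in the time-dependent random environment $\scU_\theta$ is delicate, because the state space is $\Omega\times\bT$ and one must simultaneously exploit the spectral gap of the advection--diffusion on the compact factor $\bT$ and the mixing of the noise shift on $\Omega$. The uniform integrability required in the passage to the limit is a secondary technical point, expected to reduce to standard polymer moment estimates once the Cole--Hopf representation of $g_\theta^T$ is available.
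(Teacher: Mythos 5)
Your overall skeleton matches the paper's: differentiate in $\theta$ at finite $T$, identify $g_\theta(t,x;-T)=\partial_\theta u_\theta(t,x;-T)$ as a solution of the Fokker--Planck equation via Hopf--Cole, pass to the limit $T\to\infty$, and then take $T\to\infty$ in the fundamental-theorem-of-calculus identity. However, there are two genuine gaps at exactly the points where the paper has to work hardest. First, the convergence $g_\theta(\cdot,\cdot;-T)\to\tilde g_\theta$ is not a consequence of ``standard stationary SHE/polymer estimates.'' Writing $g_\theta=1+\nabla\partial_\theta\log Z_\theta$ and noting that $\partial_\theta\log Z_\theta$ is the quenched mean of the polymer endpoint is only the starting point: the quenched mean itself grows with $T$ (it contains the winding/displacement of the path), and its spatial gradient has no obvious limit by soft arguments. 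The paper's key mechanism (Lemma~\ref{l.debarrho} and the representation \eqref{e.7191}) is that $\partial_x\bar\rho_\theta(0,x;s,y;-T)$ factors as the mid-point density times the difference $\U_\theta(0,x;-s,y)-u_\theta(0,x;-T)$ of two Burgers solutions started from different data, which is exponentially small in $s$ by the Burgers one-force-one-solution principle (Proposition~\ref{p.burgersofos}); this is how the one-force-one-solution property of the Fokker--Planck equation (Theorem~\ref{t.ofos}) is inherited from that of Burgers, together with moment bounds on the mid-point density. Your proposal does not identify this (or any substitute) cancellation mechanism, and the paper explicitly notes that for drifts without this structure no general recipe is known, even on the torus; so this step cannot be waved through as standard.

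Second, your uniqueness argument is a different route from the paper's and, as sketched, does not work. You propose unique ergodicity of the environment-seen-from-the-particle process on $\Omega\times\bT$ via a ``Doob/Harris-type argument.'' The environment coordinate evolves by a deterministic shift of the noise: there is no smoothing or irreducibility in that (infinite-dimensional) direction, so strong-Feller/Harris machinery does not apply, and the asserted equivalence between positive spacetime-stationary $L^p$ solutions of \eqref{e.fk1} and invariant measures of the environment process absolutely continuous with respect to $\mathscr P$ itself requires proof. The paper avoids all of this: uniqueness is obtained from Proposition~\ref{p.congnew}, which shows that the Fokker--Planck equation \eqref{e.fk2} with the \emph{stationary} drift $\scU_\theta$ (not $u_\theta$) forgets an arbitrary initial probability density exponentially fast, by rerunning the same polymer representation with a stationary terminal weight $e^{H_\theta(-T,\cdot)}$; the invariant-measure statement (Theorem~\ref{t.envi}) is then a consequence of this, not an ingredient. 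Your logical order is inverted, and in effect restates the hard part rather than proving it. The remaining items in your outline (normalization $\int_\bT g_\theta\,dx=1$, $\EE g_\theta=1$ via stationarity, positivity via the quenched-density interpretation, and the dominated-convergence passage in part (ii) once uniform-in-$T$, locally-uniform-in-$\theta$ $L^p$ bounds are available) are consistent with the paper and unproblematic.
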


The PDE \eqref{e.fk1} can be immediately obtained, at least formally, by differentiating the SPDE \eqref{e.burgers} with respect to $\theta$. (See Proposition~\ref{p.fpg} below.) On the other hand, the reader will also recognize \eqref{e.fk1} as the Fokker--Planck equation for the SDE
\begin{equation}\label{e.defYt}
dY_t=-\scU_\theta(t,Y_t)dt+dB_t, \quad\quad Y_0=0,
\end{equation}
where $\{B_t\}_{t\geq0}$ is a standard Brownian motion, independent of $\mathscr{U}_\theta$. That is, the quenched density of $Y_t$ (i.e.\ the density with $\scU_\theta$ considered as fixed) satisfies \eqref{e.fk1}.
Thus, there is a natural interpretation of $\tilde{g}_\theta$, as the Radon-Nikodym derivative of the ``environment seen from the particle'' for a diffusion in the Burgers drift ``$-\scU_\theta$''.

To state this result, we introduce a few notations. Fix $\theta\in\R$. We view $\scU_\theta$ as a spacetime random environment, and for the convenience of notations, we denote it by $\omega$, i.e., each instance of $\omega$ is an element in $C(\R\times\bT)$, and $\omega(t,x)=\scU_\theta(t,x)$. Since $\scU_\theta$ is spacetime stationary, there exists a group of measure-preserving transformations $\{\tau_{(t,x)}\}_{t\in\R,x\in\bT}$ on $\Omega$ such that 
\[
\tau_{(t,x)}\omega(\cdot,\cdot)=\omega(t+\cdot,x+\cdot).
\]

Now we consider the diffusion $Y_t$ satisfying \eqref{e.defYt}.
The environment seen from the particle is 
\[
\omega_t(\cdot,\cdot):=\tau_{(t,Y_t)}\omega(\cdot,\cdot)=\omega(t+\cdot,Y_t+\cdot),
\]
which is a Markov process taking values in $C(\R\times\bT)$. Since $Y_0=0$, we have $\omega_0=\omega$. Note that the processes $\omega_t$ and $Y_t$ both depend on the parameter $\theta$, and we have kept the dependence implicit to simplify the notation. 

Let $\mathscr{P}$ be the law of $\mathscr{U}_\theta$, considered as a probability measure on $C(\R\times\bT)$. Since $\tilde{g}_\theta(0,0)$ is a positive random variable with mean $1$, we consider it as a Radon-Nikodym derivative and define the probability measure $\mathscr{Q}$ on $C(\R\times \bT)$ satisfying
\[
\frac{d\mathscr{Q}}{d\mathscr{P}}=\tilde{g}_\theta(0,0).
\]
Here is the main result on $\omega_t$:
\begin{theorem}\label{t.envi}
$\mathscr{Q}$ is an invariant measure of the Markov process $\{\omega_t\}_{t\geq0}$, and, as $t\to\infty$,  $\omega_t$ converges in distribution to $\mathscr{Q}$.
\end{theorem}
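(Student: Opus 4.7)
Both statements should follow from a single change-of-variables calculation that uses the joint spacetime stationarity of $(\scU_\theta,\tilde g_\theta)$ together with the fact, from part (i), that $\tilde g_\theta$ solves the Fokker--Planck equation \eqref{e.fk1}. Writing $p_t^\omega(x,y)$ for the quenched transition density of the SDE \eqref{e.defYt}, the plan is to compute both $\E_{\mathscr Q}F(\omega_t)$ and $\E_{\mathscr P}F(\omega_t)$ by first integrating out $Y_t$ (giving a kernel $p_t^\omega(0,y)$), then using a spatial shift to move the argument of $F$ from $\tau_{(t,y)}\omega$ to $\tau_{(t,0)}\omega$, and finally recognizing what is left as a solution of \eqref{e.fk1} evaluated at $(t,0)$ or $(0,0)$.

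\textbf{Invariance.} By the definition of $\mathscr Q$,
\[
\E_{\mathscr Q}F(\omega_t)=\E_{\mathscr P}\left[\tilde g_\theta(0,0)\int_\bT F(\tau_{(t,y)}\omega)\,p_t^\omega(0,y)\,dy\right].
\]
For each $y$, I apply the spatial shift $\tau_{(0,-y)}$, which preserves $\mathscr P$ and, by the shift-covariance of the construction of $\tilde g_\theta$ and of the SDE, transforms $\tilde g_\theta(0,0)\mapsto \tilde g_\theta(0,-y)$, $\tau_{(t,y)}\omega\mapsto \tau_{(t,0)}\omega$ and $p_t^\omega(0,y)\mapsto p_t^\omega(-y,0)$. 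After the change of variable $z=-y$ this yields
\[
\E_{\mathscr Q}F(\omega_t)=\E_{\mathscr P}\left[F(\tau_{(t,0)}\omega)\int_\bT \tilde g_\theta(0,z)\,p_t^\omega(z,0)\,dz\right].
\]
The inner integral is the value at $(t,0)$ of the solution of \eqref{e.fk1} with initial data $\tilde g_\theta(0,\cdot)$; by the linearity of \eqref{e.fk1} and the fact that $\tilde g_\theta$ itself solves it, this inner integral is $\tilde g_\theta(t,0)$. Temporal stationarity of $(\omega,\tilde g_\theta)$ then gives $\E_{\mathscr Q}F(\omega_t)=\E_{\mathscr P}[F(\omega)\tilde g_\theta(0,0)]=\E_{\mathscr Q}F(\omega)$.

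\textbf{Convergence.} Running the same shift calculation with $\omega_0\sim\mathscr P$ (no tilting factor) gives
\[
\E_{\mathscr P}F(\omega_t)=\E_{\mathscr P}\left[F(\tau_{(t,0)}\omega)\int_\bT p_t^\omega(z,0)\,dz\right].
\]
Reversing time via $\omega\mapsto\tau_{(t,0)}\omega$, the inner integral becomes $\tilde h_t(0,0;\omega)$, where $\tilde h_t$ is the solution of \eqref{e.fk1} on $[-t,\infty)\times\bT$ with uniform initial data $\tilde h_t(-t,\cdot)\equiv 1$, so that
\[
\E_{\mathscr P}F(\omega_t)=\E_{\mathscr P}[F(\omega)\,\tilde h_t(0,0;\omega)].
\]
It remains to show $\tilde h_t(0,0;\omega)\to \tilde g_\theta(0,0;\omega)$ in $L^1(\mathscr P)$ as $t\to\infty$. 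Since $\int_\bT\tilde h_t(s,z)\,dz=1$ is conserved and the environment is spatially stationary, Fubini gives $\E\tilde h_t(0,0)=1=\E\tilde g_\theta(0,0)$, so by Scheff\'e's lemma it suffices to prove a.s.\ convergence. Once this is in hand, the dominated/bounded convergence theorem applied with bounded continuous $F$ yields $\E_{\mathscr P}F(\omega_t)\to \E_{\mathscr Q}F(\omega)$, i.e.\ convergence in distribution.

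\textbf{Main obstacle.} The invariance argument is essentially a formal manipulation driven by stationarity and the PDE. The real work is in the convergence step: the a.s.\ convergence of $\tilde h_t(0,0;\omega)$ to $\tilde g_\theta(0,0;\omega)$ is a one-force-one-solution statement for the linear equation \eqref{e.fk1}, i.e.\ the statement that $\tilde g_\theta$ attracts all positive solutions of \eqref{e.fk1} starting from arbitrary (normalized) initial data far in the past. This is a strengthening of the uniqueness in part (i) and should be obtained either as part of that construction or by a direct contraction/synchronization argument for \eqref{e.fk1} using the strict positivity of its solutions (e.g., via a Hilbert-type projective or relative-entropy contraction), which is where I expect the bulk of the technical effort to lie.
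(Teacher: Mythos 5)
Your invariance argument is, up to notation, the paper's own: the paper also integrates out $Y_t$ against the quenched transition density (written there as the Fokker--Planck propagator $\mathscr{G}_{t,0}(\cdot,0;\omega)$), moves the shift $\tau_{(t,x)}$ onto the other factors using stationarity of $\mathscr{P}$, and then uses the propagator identity $\int_{\bT}\mathscr{G}_{0,-t}(0,-x;\omega)\,\tilde g_\theta(-t,-x;\omega)\,dx=\tilde g_\theta(0,0;\omega)$, which is exactly your step ``$\int_{\bT}\tilde g_\theta(0,z)p_t^\omega(z,0)\,dz=\tilde g_\theta(t,0)$'' followed by temporal stationarity. Your reduction of the convergence statement is also the paper's: both identify $\EE_{\mathscr{P}}F(\omega_t)=\EE_{\mathscr{P}}\bigl[F(\omega)\,\tilde h_t(0,0;\omega)\bigr]$, where $\tilde h_t$ solves the Fokker--Planck equation \eqref{e.fk2} with initial data $\equiv 1$ at time $-t$ (the paper's $g_\theta(0,0;-t)$). (One should also justify existence/regularity of the quenched density, which the paper does by citing smoothness of $\scU_\theta$; this is minor.)

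The difference is in how the remaining convergence $\tilde h_t(0,0;\omega)\to\tilde g_\theta(0,0;\omega)$ is handled, and this is where your proposal is incomplete as written. You correctly identify it as a one-force-one-solution statement for the linear equation with drift $\scU_\theta$, but you leave it as the ``main obstacle,'' sketching only a possible route (a.s.\ convergence plus Scheff\'e, or a projective/entropy contraction). In the paper this is precisely Proposition~\ref{p.congnew}, which is established \emph{before} the proof of Theorem~\ref{t.envi} (it is the uniqueness ingredient of Theorem~\ref{t.mainth}, proved by rerunning the argument of Theorem~\ref{t.ofos} with the terminal condition changed from flat to stationary), and it gives convergence in $L^p(\Omega)$ directly — so no a.s.\ convergence or Scheff\'e argument is needed, and $L^1$ convergence suffices to pass to the limit against bounded $F$. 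If you simply cite Proposition~\ref{p.congnew}, your proof closes and coincides with the paper's. If instead you intend to prove that input from scratch by a contraction argument for \eqref{e.fk1}, be aware that this is not a routine step: the unit-time kernels only contract with random, not uniform, factors, and the paper's actual proof of this fact is the bulk of Section~3 (Hopf--Cole transformation, polymer mid-point densities, and exponential mixing from \cite{GK21}), so as a standalone argument your proposal has a genuine gap at exactly that point.
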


\subsubsection*{The spacetime white noise}

It is natural to ask about the singular case when $\xi$ is a spacetime white noise, with $R(\cdot)=\delta(\cdot)$. With the singular noise, the solution to the Burgers equation is distribution-valued, and the Fokker-Planck equation \eqref{e.fk1} is only formal. Instead of $u_\theta$, we can consider its spatial integral, \blue{which is the increments of the solution to the KPZ equation, is function-valued and related to the so-called \emph{Busemann function} in the first/last passage percolation and the model of directed polymers, see e.g.\ \cite{DH14,GRT17,GRSS23,CH05,CH08,JR20}. Our definition below can be viewed as a special case of the Busemann function for the directed polymer model associated with \eqref{e.burgers}.}

For any $t,x,T,\theta$, define 
\begin{equation}\label{e.busemann}
B_{\theta}(t,x;-T):=\int_0^x u_\theta(t,y;-T)dy,
\end{equation}
and its limit as $T\to\infty$ by $\mathscr{B}_\theta(t,x)$.
Here is our main result in this case:
\begin{theorem}\label{t.white}
\blue{In the case of $\xi$ being a spacetime white noise,} there exists a process $\{\psi_\theta(t,x)\}_{\theta,t,x}$ such that 
 for any $t,\lambda\in\R,x\in\bT$, we have 
\[
\mathscr{B}_\lambda(t,x )-\mathscr{B}_0(t,x )=\int_0^\lambda [x+\tilde{\psi}_\theta(t,x)]d\theta, 
\]
which holds almost surely. 
For fixed $t$ and $x$, the process $\{\tilde{\psi}_\theta(t,x)\}_{\theta\in\R}$ is stationary in $\theta$; for fixed $\theta$, its law is given by \begin{equation}\label{e.lawBB}
\{x+\tilde{\psi}_\theta(t,x)\}_{x\in\bT}\stackrel{\text{law}}{=}\left\{\frac{\int_0^x e^{\B_1(y)+\B_2(y)}dy}{\int_0^1 e^{\B_1(y)+\B_2(y)}dy}\right\}_{x\in\bT},
\end{equation}
where $\B_1,\B_2$ are independent standard Brownian bridges connecting $(0,0)$ and $(1,0)$.
\end{theorem}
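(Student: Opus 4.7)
The plan is to approximate the spacetime white noise by spatially mollified noises $\xi^\epsilon = \xi \star \rho_\epsilon$ (so that Theorem~\ref{t.mainth} applies) and pass to the limit $\epsilon \to 0$. Writing $\mathscr{U}_\theta^\epsilon, \mathscr{B}_\theta^\epsilon, \tilde g_\theta^\epsilon$ for the corresponding mollified objects, set
\[
\tilde\psi_\theta^\epsilon(t,x) := \int_0^x \bigl[\tilde g_\theta^\epsilon(t,y) - 1\bigr]\, dy,
\]
which satisfies $\tilde\psi_\theta^\epsilon(t,0) = \tilde\psi_\theta^\epsilon(t,1) = 0$ thanks to the normalization $\int_\bT \tilde g_\theta^\epsilon = 1$. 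Integrating \eqref{e.mainidentity} from $0$ to $x$ yields the mollified version
\[
\mathscr{B}_\lambda^\epsilon(t,x) - \mathscr{B}_0^\epsilon(t,x) = \int_0^\lambda \bigl[x + \tilde\psi_\theta^\epsilon(t,x)\bigr]\,d\theta.
\]
It then suffices to identify the law of $\tilde\psi_\theta^\epsilon$ via a polymer representation, pass to the limit $\epsilon \to 0$, and verify stationarity in $\theta$.

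\textbf{Polymer representation and the Brownian-bridge law.} By Cole--Hopf and time reversal, the Fokker--Planck density at time $0$ admits the factorization
\[
\tilde g_\theta^\epsilon(0,x) = \frac{Z^{+,\epsilon}_\theta(0,x)\, Z^{-,\epsilon}_\theta(0,x)}{\int_\bT Z^{+,\epsilon}_\theta(0,y)\, Z^{-,\epsilon}_\theta(0,y)\,dy},
\]
where $Z^{+,\epsilon}_\theta$ (resp.\ $Z^{-,\epsilon}_\theta$) is the partition function of a Brownian polymer ending (resp.\ starting) at $(0,x)$ built from $\xi^\epsilon$ on $\{t<0\}$ (resp.\ $\{t>0\}$). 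The two factors are therefore independent. In the limit $\epsilon \to 0$, the standard characterization of the invariant measure of the KPZ equation on the circle yields $\log Z^{+}_\theta(0,\cdot) \stackrel{\text{law}}{=} c_+ + \theta x + \B_1(x)$ and $\log Z^{-}_\theta(0,\cdot) \stackrel{\text{law}}{=} c_- - \theta x + \B_2(x)$, with random constants $c_\pm$ and independent standard Brownian bridges $\B_1, \B_2$; the sign reversal of $\theta$ in the backward factor comes from time reversal of the Burgers drift. The $\pm \theta x$ and constant terms cancel in the ratio, producing precisely the law \eqref{e.lawBB}.

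\textbf{Main obstacle and stationarity in $\theta$.} The convergence $\mathscr{B}_\lambda^\epsilon - \mathscr{B}_0^\epsilon \to \mathscr{B}_\lambda - \mathscr{B}_0$ on the left follows from the standard convergence of Cole--Hopf SHE solutions on the torus. On the right, the polymer-ratio representation above shows $\tilde\psi_\theta^\epsilon$ converges in distribution to a limit $\tilde\psi_\theta$ with the advertised law; the most delicate step of the proof is to obtain uniform-in-$\epsilon$ moment estimates on the partition-function ratios, jointly over $\theta$ in compact intervals, that are strong enough to interchange the $\epsilon\to 0$ limit with the $\theta$-integral. Finally, stationarity of $\{\tilde\psi_\theta(t,x)\}_{\theta\in\R}$ in $\theta$ follows from Galilean invariance: for spacetime white noise the shift $\theta \mapsto \theta+h$ can be absorbed into a Galilean boost composed with a spatial translation of $\xi$, yielding the joint-in-$\theta$ distributional identity $\{\mathscr{B}_{\theta+h}(t,x)\}_\theta \stackrel{\text{law}}{=} \{\mathscr{B}_\theta(t,x) + h x\}_\theta$; differentiating in $\theta$ and applying the main identity gives $\{\tilde\psi_{\theta+h}(t,x)\}_\theta \stackrel{\text{law}}{=} \{\tilde\psi_\theta(t,x)\}_\theta$.
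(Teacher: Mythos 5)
Your route is genuinely different from the paper's (the paper never touches $\tilde g_\theta$ or Theorem~\ref{t.mainth} in the white-noise case; it works directly with $\partial_\theta\log Z_\theta(0,x;-T)=x+\theta T+\psi_\theta(0,x;-T)$, the quenched mean polymer displacement \eqref{e.quenchmean}, proves $L^p$ convergence of its increments as $T\to\infty$ via the mixing results of \cite{GK23}, and identifies the law through \cite[Proposition 4.8]{GK23} and \cite[Theorem 2.3]{GK21}), but your central step contains a genuine error. The claimed almost-sure factorization $\tilde g^\epsilon_\theta(0,x)\propto Z^{+,\epsilon}_\theta(0,x)Z^{-,\epsilon}_\theta(0,x)$ with $Z^{+,\epsilon}$ built from the noise on $\{t<0\}$ and $Z^{-,\epsilon}$ from the noise on $\{t>0\}$ cannot hold: $\tilde g^\epsilon_\theta(0,\cdot)$ is the $T\to\infty$ limit of the Fokker--Planck evolution driven by $u_\theta(s,\cdot;-T)$, $s\le 0$, hence is measurable with respect to the noise on $(-\infty,0]$, while the ratio $Z^{-,\epsilon}_\theta(0,x)/Z^{-,\epsilon}_\theta(0,x')$ genuinely depends on the future noise. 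What is true is only a \emph{distributional} identification, after a time reversal of the drift field, of $\tilde g_\theta(0,\cdot)$ with the midpoint density of a doubly infinite polymer; this is exactly the heuristic the paper records in Remark~\ref{r.density} and deliberately does not use as a proof, because turning it into one requires (i) the time-reversal equality in law at the level of the whole environment process, (ii) a uniqueness statement identifying the stationary environment density of the $h$-transform dynamics with the product of forward/backward partition functions, and (iii) the bridge law for each factor, which for fixed $\epsilon>0$ (smooth covariance $R$) is \emph{not} an exponential Brownian bridge, so your "standard characterization of the KPZ invariant measure" is only available after the $\epsilon\to0$ limit you have not controlled.

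Beyond this, the analytic core of your plan is precisely the step you defer: the objects $\mathscr{B}^\epsilon_\theta$, $\tilde g^\epsilon_\theta$ are themselves $T\to\infty$ limits, so passing $\epsilon\to0$ requires interchanging two limits (and the $\theta$-integral), which needs mixing/moment bounds uniform in $\epsilon$; nothing in the paper or in the cited literature gives these for free, and acknowledging the difficulty does not close it. Finally, even granting all of the above, a mollification argument as written produces convergence in distribution of $\tilde\psi^\epsilon_\theta$, whereas the theorem asserts the existence of a process $\tilde\psi_\theta$ on the original probability space together with an \emph{almost sure} identity for $\mathscr{B}_\lambda-\mathscr{B}_0$; the paper obtains this by constructing $\tilde\psi_\theta(t,x)$ as an $L^p(\Omega)$ limit of $\psi_\theta(t,x;-T)-\psi_\theta(t,0;-T)$ on the same space and passing to the limit in the exact finite-$T$ identity \eqref{e.7251}, a step your scheme would still have to supply (e.g., by upgrading to convergence in probability jointly with the white-noise objects). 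Your Galilean/shear argument for stationarity in $\theta$ is fine and matches the paper's use of the sheared noise $\xi^\theta$, but it presupposes the construction of the joint process that the missing steps were supposed to deliver.
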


On a heuristic level, Theorem~\ref{t.white} can be viewed as a special case of Theorem~\ref{t.mainth}, and the density function $\frac{e^{\B_1(\cdot)+\B_2(\cdot)}}{\int_0^1 e^{\B_1(y)+\B_2(y)}dy}$ appearing on the r.h.s.\ of \eqref{e.lawBB} plays the role of the $\tilde{g}_\theta(t,\cdot)$ in \eqref{e.mainidentity}, see more discussion in Remark~\ref{r.density} below.

\subsection{Context}
As a simplified model of hydrodynamic turbulence, the stochastic Burgers equation  has been a subject of extensive study since the early 1990s. Its importance, however, extends beyond hydrodynamics; this equation also holds a critical position in non-equilibrium statistical physics, especially in the study of random growth models. It is the derivative of the Kardar-Parisi-Zhang (KPZ) equation, which is one of the default models of interface growth subject to random perturbations. The Burgers and KPZ equations have been studied intensively in recent years, and they appear as the scaling limits of a wide range of models, including various particle systems, directed polymers, and interacting diffusions. To analyze large-scale random fluctuations for models in the so-called $1+1$ KPZ universality class, it is crucial to have good understandings of the invariant measures of the underlying Markov process, which in a sense can be viewed as (a discretization of ) the stochastic Burgers equation. 

The Burgers equation preserves the spatial mean, corresponding to the $\theta$ parameter in our case. Thus we use $\theta$ to parameterize the family of stationary solutions. One of our goals is to investigate the joint distributions of the coupled stationary solutions  $\{\mathscr{U}_\theta(\cdot,\cdot)\}_{\theta}$. In this paper, we consider the periodic setting. 

One of our surprising findings is that the derivative of $\mathscr{U}_\theta$ with respect to $\theta$ is related to the steady state of a diffusion in a random environment, described by the  SDE
\[
dY_t=-\mathscr{U}_\theta(t,Y_t)dt+dB_t.
\] This is an interesting diffusion process for at least two reasons:
\begin{enumerate}
    \item Suppose we denote the solution to the corresponding KPZ equation by $\mathscr{H}_\theta$ and view it as a randomly growing interface. Then the drift for the diffusion is $-\nabla \mathscr{H}_\theta$. Thus it is very natural to view  the process $Y_t$ as describing a particle sliding on the evolving interface, subject to some additional thermal noise. This is a model that has received a lot of attentions in the physics literature, see e.g.\ \cite{slider} and the references cited there, and is sometimes referred to as a ``passive slider''.
    \item The standard (deterministic, inviscid) Burgers equation takes the form \[\partial_t u+u\partial_x u=0,\] and the solution can be interpreted as the speed of the characteristics. With our  notation, it is actually ``$-\mathscr{U}_\theta$'' that plays the  role of the velocity, since the nonlinear term is on the r.h.s.\  rather than the l.h.s.\ of \eqref{e.burgers} by our convention. Therefore, we can view $Y_t$ as the position of a particle tracing the characteristics of the Burgers flow while being subjected to an independent thermal noise. If we consider  the asymmetric simple exclusion process (ASEP), which is a microscopic version of the Burgers equation, then a second class particle, which represents
        a microscopic characteristic, plays a similar role as the diffusion process. In the white noise setting, it is even likely that, with the weakly asymmetric scaling considered in \cite{BG97}, the second class particle process converges to the above diffusion, see e.g.\ \cite{BQS11}. Therefore, our result seems to be indicating an interesting connection between the coupled invariant measures and the ``shock profile'' observed by the second class particle. \blue{This observation is further developed in the subsequent work \cite{Dun24} by the first-named author, in which the inviscid Burgers equation with Poisson forcing is considered.} \blue{It is also clear if we draw an analogy between the stochastic Burgers equation and ASEP: the second class particles are the only differences between two coupled exclusion processes with different densities, and this elementary fact is essentially a discrete version of \eqref{e.mainidentity}, if we treat $\tilde{g}_\theta$ as the ``density'' of second class particles.} See Remark~\ref{r.shock} for a connection to another notion of ``shock profile.'' It is worth mentioning \blue{the recent work \cite{Bat23} where the ``environment seen from particle'' was studied in the first passage percolation}, and also  \cite{MSZ21} in which a result similar to Theorem~\ref{t.envi} was established for the second class particle, namely, the convergence of the environment seen from the second class particle (in the whole line setting).
\end{enumerate}

The study of coupled Busemann functions indexed by different slopes $\theta$ has seen important progress   during the past years. For a few solvable models in the $1+1$ KPZ universality class, one can obtain some explicit information on the joint distributions of the Busemann functions, see \cite{BFS23,BSS22,BSS221,BSS23,FS20,GRSS23,JRS22,SS23} and the references cited there.  One of the  focuses has been on studying the discontinuity of the Busemann function in the variable $\theta$, after fixing a realization of the random environment. \blue{This} turns out to be closely related to questions on the non-uniqueness of geodesics and the failure of the one-force-one-solution principle in a quenched sense. Compared to the aforementioned works, there is a fundamental difference in our periodic setting: by the moment bounds on $\tilde{g}_\theta$ obtained in Theorem~\ref{t.mainth} and an application of the Kolmogorov continuity theorem, we conclude that there exists a continuous version of the process $\{\mathscr{U}_\theta(t,x)\}_{\theta,t,x}$. In the case of a spacetime white noise, the same result holds for the Busemann function, in light of Theorem~\ref{t.white}. This is in sharp contrast with the non-periodic case. Very recently, the same problem was studied in  \cite{GRSS23} for the KPZ equation with a spacetime white noise in the non-periodic setting, and it was shown that, almost surely, \blue{the set of discontinuities} of the Busemann function in the variable $\theta$   is countably infinite and dense in $\R$. The difference seems to come from a stronger integrability in the compact setting, which can be seen from the expression on the r.h.s.\ of \eqref{e.lawBB}. What this implies about the geodesics and the one-force-one-solution principle is worth further study.

The continuity of $\mathscr{U}_\theta$ in $\theta$ on the torus can also be understood from the perspective of the preservation of the mean $\theta$ by the Burgers dynamics. On the torus, the identity $\int_{\bT}\mathscr{U}_\theta(t,x)dx=\theta$ should hold at least simultaneously for all $\theta\in\bQ$, for almost every realization of the noise. This eliminates the possibility of discontinuities of $\mathscr{U}_\theta$ in $\theta$: a discontinuity would introduce a jump in the process $\theta\mapsto \int_{\bT}\mathscr{U}_\theta(t,x)dx$, making it impossible for this process to be equal to $\theta$ for all $\theta\in\bQ$. The same argument does not work on $\R$, where the (quenched) mean should be defined as the limit of $\frac{1}{2L}\int_{-L}^{L} \mathscr{U}_\theta(t,x)dx$ as $L\to\infty$, so the difference of two solutions across a discontinuity can have zero mean  without being zero, as a result of the factor $L^{-1}$ as $L\to\infty$.

One of the main challenges in this paper is to show the existence of global solutions to the Fokker-Planck equation \eqref{e.fk1}. Such problems are the key in the study of  a passive tracer in turbulent transport. For ``compressible'' environments, whether the process of the environment seen from the particle, or, put it in different words, the Lagrangian velocity process, has an invariant measure, and whether it converges to the invariant measure is a fundamental question concerning the long time behaviors of the tracer. The problem is equivalent to studying the convergence of the Fokker-Planck equation of the form \eqref{e.fk1}, starting from the constant $1$. As it will become clear later, the   equation actually describes the evolution of the Radon-Nikodym derivative of the Lagrangian velocity process with respect to the Eulerian velocity. Even in the periodic setting, with a time-dependent drift that satisfies very good mixing properties, there is no standard recipe to prove such a result. A few  existing results are for the drifts modeled by a Gaussian process that decorrelates sufficiently rapidly, see e.g.\ \cite{DG22,tomasz1,tomasz3,tomasz2}. Our approach makes use of the special structure of the drift, as it solves the Burgers equation which is related to the KPZ and stochastic heat equation, through the Hopf-Cole transformation.

    \begin{remark}
        The paper \cite{DR21} on viscous shock solutions to the stochastic Burgers equation yields some analogies to the present work. That paper considered shock solutions to the stochastic Burgers equation connecting a ``top'' and ``bottom'' solution---the latter solutions being two ordered stationary solutions with different means. It was shown in \cite{DR21} that, at stationarity, the Radon--Nikodym derivative of the top and bottom solutions ``seen from a shock'' is proportional to the difference to the top and bottom solutions. Theorem~\ref{t.envi}  in the present work tells us that for the particle satisfying \eqref{e.defYt}, the Radon--Nikodym derivative of the environment seen from the particle is $\tilde g_\theta$, which is an infinitesimal limit of the difference between the top and bottom solutions.

        More precisely, the ``position'' of a shock connecting top and bottom solutions $u_{\theta_2}$ and $u_{\theta_1}$ solves the ODE
\begin{equation}\label{e.bode}
    \partial_tb_t = -\frac12\left(\partial_x\log(u_{\theta_2}-u_{\theta_1})+u_{\theta_2}+u_{\theta_1}\right)(t,b_t).
\end{equation}
This expression comes from \cite[(1.6--1.8)]{DR21}, with some signs changed because we use a different sign on the nonlinearity in \eqref{e.burgers}. 
Letting $\theta_1 = \theta-\eps$ and $\theta_2 = \theta+\eps$ and taking $\eps\searrow 0$, 
we get the limiting ODE
\begin{equation}
    \partial_t b_t = - u_\theta(t,b_t)-\frac12 \partial_x\log \tilde g_\theta(t,b_t).\label{e.limitODE}
\end{equation}
At stationarity, the environment seen from this limiting $b_t$ has Radon--Nikodym derivative $\tilde g_\theta$, just like the situation in Theorem~\ref{t.envi}.
On the other hand, the ODE \eqref{e.limitODE} is not the same as the diffusion \eqref{e.defYt}, although it does share the $-u_\theta$ term.
\label{r.shock}
    \end{remark}

\subsection{Organization of the paper}
The paper is organized as follows. In Section~\ref{s.bfp}, we derive a random Fokker-Planck equation  from the stochastic Burgers equation, through computing the derivative of $u_\theta$ in the variable $\theta$. A one-force-one-solution principle is established in Section~\ref{s.ofos} for the Fokker-Planck equation, which consists of the main technical part of the paper. In Section~\ref{s.proofmain}, we complete the proof of Theorems~\ref{t.mainth} and \ref{t.envi}. In Section~\ref{s.white}, we study the case of a spacetime white noise, and the argument there is mostly based on \cite{GK23}. Some basic facts on the stochastic Burgers equation are left in the appendix.

\subsection*{Notations and conventions.} 
\begin{enumerate} \item Throughout the paper, we denote by $\E$ expectation with respect to an auxiliary Brownian motion, and $\EE$ as the expectation with respect to the random noise $\xi$.

\item We denote $q_t(x)=\tfrac{1}{\sqrt{2\pi t}} e^{-x^2/(2t)}$ the standard heat kernel on $\R$, and we also define  
\blue{\[
        G_t(x)=\sum_{n\in\Z} q_t(x+n) \quad\text{and}\quad Q_t(x)=\sum_{n\in\Z} |\partial_xq_t(x+n)|.
\]}

\item \blue{$\mathcal{Z}_{t,s}(x,y), \mathcal{G}_{t,s}(x,y)$ are the propagators of the stochastic heat equation on $\R_+\times \R$ and $\R_+\times \bT$ respectively; see the precise definitions in \eqref{e.defZtsxy} and \eqref{e.defGtsxy}.}

\item \blue{For any $\theta\in\R$, we define the sheared noise 
\[
\xi^\theta(t,x)=\xi(t,x-\theta t), \quad\quad (t,x)\in\R^2.
\]}
\item When there is no confusion, we write $\sum_n=\sum_{n\in\Z}$.

\item \blue{${\mathcal M}_1(\bT)$ is the set of all Borel probability
measures  on $\bT$.}

\item We use $\|\cdot\|_p$ to denote the $L^p(\Omega)$ norm. 
\item \blue{In all the estimates we derive, the constants $c,C>0$ depend on $R(\cdot)$ and $p\in [1,\infty)$, which may change from line to line.}

\item Throughout the paper, we will identify functions on $\bT$ with $1$-periodic functions on $\R$, and sometimes we do not distinguish between them.

 \item \blue{The case of a smooth noise is analyzed in Sections 2--4, and the white noise case is studied in Section 5. The assumptions on the noise are implicitly imposed in the corresponding sections.}
\end{enumerate}

\subsection*{Acknowledgement.}
Y.G. would like to acknowledge the hospitality of Columbia University in fall 2023, where part of the work was done, and he would like to thank Ivan Corwin and Evan Sorensen for multiple discussions on the subject.
 Y.G. was partially supported by the NSF through DMS-2203014.   We would like to thank  the anonymous referee for sharing their expertise on the subject and for a very careful reading of the manuscript and many helpful suggestions and comments.

\section{From Burgers to Fokker-Planck}
\label{s.bfp}

In this section, we will start from the Burgers equation \eqref{e.burgers} and analyze how its solution depends on the parameter $\theta$, which only appears explicitly in the initial data. One could try to formally take the derivative of $\theta$ on both sides of \eqref{e.burgers}, which would result in a Fokker-Planck equation. The goal of this section is to justify this procedure rigorously. 
\blue{Throughout this section, we consider the case when $\xi$ is spatially smooth.}

\blue{We start from the Hopf-Cole transformation:
\[
u_\theta(t,x;-T)=\partial_x \log Z_\theta(t,x;-T),
\]
with $Z_\theta$ solving the stochastic heat equation 
\[
\partial_t Z_\theta =\frac12\Delta Z_\theta+Z_\theta \xi,\quad\quad Z_\theta(0,x;-T)=e^{\theta x}.
\]
Here the product between $Z_\theta$ and $\xi$ is interpreted in the It\^o-Walsh sense, and the solution admits the following Feynman--Kac representation \cite{BC95}:
\[
Z_\theta(t,x;-T)=\E\big[e^{\int_0^{t+T}\xi(t-s,x+B_s)ds-\frac12R(0)(t+T)}e^{\theta(x+B_{t+T})}\big].
\]
\blue{Here the $\E$ denotes expectation with respect to an auxiliary Brownian motion $B$.}
From the above expression, we can see that, almost surely, for each $t\geq -T$ and $x\in\bT$,  $u_{\theta}(t,x;-T)$ is differentiable in $\theta$; see a proof in Lemma~\ref{l.jointsmooth}.}

%

So our starting point is the following identity: for any $\lambda,t\in\R$ and $x\in\bT$, 
\[
u_{\lambda}(t,x;-T)-u_{0}(t,x;-T)=\int_0^\lambda \partial_\theta u_{\theta}(t,x;-T)d\theta,
\]
and the goal is to send $T\to\infty$ on both sides of the above identity and show the convergence in a proper sense. The convergence of the l.h.s.\ is a classical result of the one-force-one-solution principle of the Burgers equation with a periodic forcing; see e.g.\ \cite{sinai}. We will provide a proof in Appendix~\ref{s.aburgers} for the convenience of the reader. So the main challenge is to study the r.h.s.\  To this end, define 
\[
g_{\theta}(t,x;-T)=\partial_\theta u_{\theta}(t,x;-T).
\]

The main result of this section is to show that $g_\theta$ solves a Fokker-Planck equation:
\begin{proposition}\label{p.fpg}
For any $\theta\in\R$, $g_\theta$ solves the problem
\begin{equation}\label{e.fpg}
\begin{aligned}
\partial_t 
g_{\theta}(t,x;-T)&=\frac12\Delta g_{\theta}(t,x;-T)+\nabla(u_{\theta}g_{\theta})(t,x;-T), \quad\quad t>-T,x\in\R,\\
g_{\theta}(-T,x;-T)&=1.
\end{aligned}
\end{equation}
\end{proposition}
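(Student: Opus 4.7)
The plan is to exploit the Hopf--Cole transformation and reduce the problem to a direct It\^o computation on the stochastic heat equation (SHE). Writing $u_\theta = \partial_x\log Z_\theta$ with $Z_\theta$ solving $\partial_t Z_\theta = \tfrac12\Delta Z_\theta + Z_\theta\xi$ and $Z_\theta(-T,x;-T)=e^{\theta x}$, I would set $h_\theta := \partial_\theta Z_\theta$ and $\phi_\theta := h_\theta/Z_\theta$, so that formally $g_\theta = \partial_\theta\partial_x\log Z_\theta = \partial_x\phi_\theta$. My first step is to justify, via the Feynman--Kac representation of $Z_\theta$ displayed in the excerpt, that $\theta\mapsto Z_\theta(t,x;-T)$ is $C^1$ on an event of full probability, by differentiating under the expectation over the auxiliary Brownian motion (the Gaussian moment bounds on $\int_0^{t+T}\xi(t-s,x+B_s)\,ds$ supply the required dominating function, as already invoked in Lemma~\ref{l.jointsmooth}). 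Differentiating under the expectation also shows that $h_\theta$ satisfies the same linear SHE as $Z_\theta$, but with initial datum $xe^{\theta x}$.

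Next I would derive the PDE for $\phi_\theta$ via It\^o's formula applied to the ratio $h_\theta/Z_\theta$. Since the noise is spatially smooth in this section, classical It\^o calculus applies pointwise in $x$. Because $h_\theta$ and $Z_\theta$ are driven by the \emph{same} multiplicative noise $\xi$, the quadratic variations are $d\langle Z,Z\rangle_t = Z_\theta^2 R(0)\,dt$ and $d\langle h,Z\rangle_t = h_\theta Z_\theta R(0)\,dt$, and expanding
\[
    d\Bigl(\frac{h_\theta}{Z_\theta}\Bigr) = \frac{dh_\theta}{Z_\theta} - \frac{h_\theta}{Z_\theta^2}\,dZ_\theta - \frac{1}{Z_\theta^2}\,d\langle h,Z\rangle_t + \frac{h_\theta}{Z_\theta^3}\,d\langle Z,Z\rangle_t
\]
one sees that the It\^o stochastic term and the two quadratic-variation corrections cancel in pairs, leaving a deterministic-coefficient equation. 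Using the identity $\Delta(h/Z) = \Delta h/Z - (h/Z^2)\Delta Z + 2(\partial_x Z/Z)\,\partial_x(h/Z)$ together with $u_\theta = \partial_x Z_\theta/Z_\theta$, this simplifies to the linear transport--diffusion equation
\[
    \partial_t \phi_\theta = \tfrac12\Delta\phi_\theta + u_\theta\,\partial_x\phi_\theta.
\]
Differentiating in $x$ and applying the Leibniz rule to $u_\theta g_\theta = u_\theta\,\partial_x\phi_\theta$ then yields the Fokker--Planck equation \eqref{e.fpg}. The initial condition is immediate from $\phi_\theta(-T,x;-T) = xe^{\theta x}/e^{\theta x} = x$, so $g_\theta(-T,x;-T) = \partial_x x = 1$.

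The main technical obstacle is to carry out these manipulations rigorously. One must invoke strict positivity of $Z_\theta$ so that $1/Z_\theta$ is well-defined and smooth; justify the interchanges of $\partial_\theta$ with $\partial_t$, $\partial_x$, and the Feynman--Kac expectation locally uniformly in $(\theta,t,x)$; and verify enough space regularity of $h_\theta/Z_\theta$ that spatial derivatives of the It\^o identity can be taken termwise. Under the standing assumption of spatially smooth noise in this section these steps are standard once the appropriate uniform-in-$\theta$ moment bounds on $Z_\theta$, $h_\theta$ and $1/Z_\theta$ are in place, but each requires careful bookkeeping.
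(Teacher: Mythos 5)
Your argument is correct, and it reaches \eqref{e.fpg} by a genuinely different route for the key step. The paper also starts from Hopf--Cole and reduces everything to a PDE for $\partial_\theta\log Z_\theta$, but it proves that PDE (Lemma~\ref{l.eqphi}) probabilistically: the quantity $\partial_\theta\log Z_\theta - x$ is identified as the quenched mean displacement of the polymer endpoint, the polymer measure is identified via Girsanov with the diffusion $dX_s=u_\theta(t-s,X_s)ds+dB_s$ (citing \cite[Lemma 4.2]{GK23}), and It\^o's formula along $X$ then gives the equation $\partial_t\phi_\theta=\frac12\Delta\phi_\theta+u_\theta(\nabla\phi_\theta+1)$. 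You instead observe that $h_\theta=\partial_\theta Z_\theta$ solves the same linear SHE with initial datum $xe^{\theta x}$ and apply It\^o's quotient rule to $h_\theta/Z_\theta$, using that both processes are driven by the same noise so the martingale parts and the two quadratic-variation corrections cancel; this is a purely stochastic-calculus computation at the SHE level, closer in spirit to the ``differentiate directly'' option the paper mentions and sets aside in its first sentence. Your route is more self-contained for this proposition, while the paper's choice is deliberate: the probabilistic representation of $\phi_\theta$ (quenched polymer mean, midpoint density) is exactly what gets reused in Section~\ref{s.ofos} to prove the one-force-one-solution principle, so the computational shortcut would not serve the rest of the argument. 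One small slip: your stated identity should read $\Delta(h/Z)=\Delta h/Z-(h/Z^2)\Delta Z-2(\partial_x Z/Z)\,\partial_x(h/Z)$ (minus sign on the last term); equivalently $\Delta h/Z-(h/Z^2)\Delta Z=\Delta\phi_\theta+2u_\theta\partial_x\phi_\theta$, which is what the It\^o drift requires, and with that correction your final equation $\partial_t\phi_\theta=\frac12\Delta\phi_\theta+u_\theta\partial_x\phi_\theta$ and the conclusion $g_\theta=\partial_x\phi_\theta$ solving \eqref{e.fpg} with $g_\theta(-T,\cdot;-T)\equiv1$ are exactly right.
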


\begin{proof}
    One can directly take the derivative with respect to $\theta$, in the mild formulation of \eqref{e.burgers}, to obtain \eqref{e.fpg}. For our purpose (and the rest of the paper), we provide a different proof. 

First, through the Hopf-Cole transformation, we can write 
\[
u_\theta(t,x;-T)=\partial_x \log Z_{\theta}(t,x;-T),
\] with $Z_\theta$ solving 
\[
\begin{aligned}
\partial_t Z_\theta(t,x;-T)&=\frac12\Delta Z_\theta(t,x;-T) +Z_\theta(t,x;-T)  \xi(t,x),  \quad\quad t>-T,x\in\R,\\
Z_{\theta}(-T,x;-T)&=e^{\theta x}.
\end{aligned}
\]
\blue{For fixed $t,T$, the function $Z_\theta(t,x;-T)$ is smooth in the $(\theta,x)$ variable; see Lemma~\ref{l.jointsmooth} in the appendix.}
Therefore, one can interchange the order of partial derivatives and obtain
\begin{equation}\label{e.regZ}
g_\theta(t,x;-T)=\partial_{\theta}u_\theta(t,x;-T)=\partial_x \partial_\theta\log Z_{\theta}(t,x;-T).
\end{equation}
It turns out that $\partial_{\theta}\log Z_{\theta}$ is related to the quenched mean of a directed polymer in a random environment: by the Feynman-Kac formula, we have 
\[
Z_{\theta}(t,x;-T)=\E \big[e^{\int_0^{t+T} \xi(t-s,x+B_s)ds-\frac12R(0)(t+T)}e^{\theta (x+B_{t+T})}\big],
\]
where $\E$ is the expectation on the standard Brownian motion $B$. This leads to 
\begin{equation}\label{e.reZphi}
\begin{aligned}
\partial_\theta \log Z_{\theta}(t,x;-T)
&=\frac{\E \big[e^{\int_0^{t+T} \xi(t-s,x+B_s)ds}e^{\theta (x+B_{t+T})}(x+B_{t+T})\big]}{\E \big[e^{\int_0^{t+T} \xi(t-s,x+B_s)ds}e^{\theta (x+B_{t+T})}\big]}\\
&=x+\phi_{\theta}(t,x;-T),
\end{aligned}
\end{equation}
where 
\[
\blue{\phi_\theta(t,x;-T)}:= \frac{\E\big[e^{\int_0^{t+T}\xi(t-s,x+B_s)ds}e^{\theta(x+B_{t+T})}B_{t+T}\big]}{\E\big[e^{\int_0^{t+T}\xi(t-s,x+B_s)ds}e^{\theta(x+B_{t+T})}\big]}\] is the average displacement of the polymer endpoint. Applying Lemma~\ref{l.eqphi} below, we know $\phi_\theta$ solves 
\begin{equation}\label{e.eqphi}
\begin{aligned}
\partial_t\phi_\theta(t,x;-T)&=\frac12\Delta\phi_\theta(t,x;-T)+u_{\theta}(t,x;-T)[\nabla\phi_{\theta}(t,x;-T)+1], \quad\quad t>-T,x\in\R,\\
\phi_{\theta}(-T,x;-T)&=0.
\end{aligned}
\end{equation}
By \eqref{e.regZ} and \eqref{e.reZphi}, we have $g_\theta=1+\partial_x \phi_\theta$, which can be checked to solve \eqref{e.fpg}.
\end{proof}

\begin{lemma}\label{l.eqphi}
The function $\phi_{\theta}(t,x;-T)$ solves \eqref{e.eqphi}.
\end{lemma}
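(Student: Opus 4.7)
The plan is to derive the equation for $\phi_\theta$ by first obtaining the KPZ equation for $H_\theta:=\log Z_\theta$, then differentiating in $\theta$, and finally using $\phi_\theta=\partial_\theta H_\theta-x$ from \eqref{e.reZphi}.

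First, since $Z_\theta$ solves the multiplicative stochastic heat equation $\partial_t Z_\theta=\tfrac12\Delta Z_\theta+Z_\theta\,\xi$ (in the It\^o sense) with strictly positive solution, I would apply It\^o's formula to $H_\theta=\log Z_\theta$. Because $\xi$ is spatially smooth and white in time, the quadratic variation at the point $x$ is $Z_\theta(t,x)^2 R(0)\,dt$, so It\^o's formula produces the KPZ equation
\[
\partial_t H_\theta=\tfrac12\Delta H_\theta+\tfrac12(\nabla H_\theta)^2+\xi-\tfrac12 R(0),
\]
with $H_\theta(-T,x;-T)=\theta x$. Recalling the Hopf--Cole identity $u_\theta=\partial_x \log Z_\theta=\nabla H_\theta$, this is the standard KPZ form.

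Next, I would differentiate this equation in $\theta$. The interchange of $\partial_\theta$ with the spatial and temporal derivatives, as well as with the stochastic integral term, is justified by Lemma~\ref{l.jointsmooth}: $Z_\theta(t,x;-T)$ is smooth in $(\theta,x)$ for each fixed $t,T$, and one can further differentiate the Feynman--Kac representation in $\theta$ to see that $\partial_\theta Z_\theta$ itself solves a linear SHE driven by the same noise with initial data $x e^{\theta x}$. Setting $f_\theta:=\partial_\theta H_\theta$, the differentiated equation becomes
\[
\partial_t f_\theta=\tfrac12\Delta f_\theta+(\nabla H_\theta)\,\nabla f_\theta=\tfrac12\Delta f_\theta+u_\theta\,\nabla f_\theta,
\]
with initial data $f_\theta(-T,x;-T)=x$ since $H_\theta(-T,x;-T)=\theta x$.

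Finally, I substitute $f_\theta=x+\phi_\theta$ as obtained in \eqref{e.reZphi}. Then $\nabla f_\theta=1+\nabla\phi_\theta$ and $\Delta f_\theta=\Delta\phi_\theta$, so the equation for $f_\theta$ rearranges exactly to
\[
\partial_t\phi_\theta=\tfrac12\Delta\phi_\theta+u_\theta(\nabla\phi_\theta+1),
\]
and the initial condition becomes $\phi_\theta(-T,x;-T)=0$. This is precisely \eqref{e.eqphi}. The main (and essentially only) obstacle is the rigorous interchange of $\partial_\theta$ with the derivatives in the stochastic PDE; this is handled by the joint smoothness statement of Lemma~\ref{l.jointsmooth} combined with the fact that the noise $\xi$ is $\theta$-independent, so no new stochastic-integration subtleties arise when differentiating.
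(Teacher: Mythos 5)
Your proposal is correct, but it takes a genuinely different route from the paper. The paper proves the lemma probabilistically: it identifies the quenched polymer measure with the law of the diffusion $dX_s=u_\theta(t-s,X_s;0)\,ds+dB_s$ (citing \cite[Lemma 4.2]{GK23}), so that $\phi_\theta(t,x;0)=\E X_t-x=\E\int_0^t u_\theta(t-s,X_s;0)\,ds$, and then applies It\^o's formula to $\psi_\theta(t-s,X_s)$ for the solution $\psi_\theta$ of \eqref{e.eqphi} to show $\psi_\theta$ admits the same probabilistic representation, whence $\phi_\theta=\psi_\theta$. You instead pass to $H_\theta=\log Z_\theta$, derive the KPZ equation by It\^o's formula, differentiate pathwise in $\theta$ (the additive, $\theta$-independent noise drops out), and rearrange using $\partial_\theta H_\theta=x+\phi_\theta$ from \eqref{e.reZphi}; your algebra and the resulting initial condition are all correct, and the paper itself acknowledges in the proof of Proposition~\ref{p.fpg} that such a direct differentiation argument is available. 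What your route buys is a shorter, more elementary derivation; what the paper's route buys is exactly the representation $\phi_\theta=\E X_t-x$ and the polymer/quenched-density machinery that is reused throughout Section~\ref{s.ofos} (e.g.\ \eqref{e.phibarrho}), which is why the authors chose it. Two small points to tighten: Lemma~\ref{l.jointsmooth} as stated gives smoothness in $(\theta,x)$ for fixed $t$, so to swap $\partial_\theta$ with $\partial_t$ you should either work with the time-integrated (mild) form of the KPZ equation or, cleaner still, set $V_\theta:=\partial_\theta Z_\theta$ (which, by differentiating the Feynman--Kac formula under the expectation as in Lemma~\ref{l.jointsmooth}, solves the same linear SHE with initial data $xe^{\theta x}$) and apply It\^o's formula directly to the ratio $V_\theta/Z_\theta$, where the noise and It\^o-correction terms cancel and the equation for $f_\theta$ follows without differentiating the SPDE itself; and you implicitly use strict positivity and spatial smoothness of $Z_\theta$ to apply It\^o to $\log Z_\theta$, which holds in the smooth-noise setting but deserves a sentence.
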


Before presenting the proof of Lemma~\ref{l.eqphi}, we introduce another notation that will be used throughout the paper.  Fix any realization of $\xi$ and parameters $t>-T,x,\theta\in\R$. We define the (quenched) polymer measure $\hat{\Pb}_{t,-T,x,\theta}$ on 
\[
C_x[0,t+T]=\{f\in C[0,t+T]:f(0)=x\}
\] such that for any bounded function $F:C_x[0,t+T]\to\R$, we have 
\begin{equation}\label{e.defpolymer}
\int_{C_x[0,t+T]}F(\omega)\hat{\Pb}_{t,-T,x,\theta}(d\omega)=\frac{\E \big[e^{\int_0^{t+T} \xi(t-s,x+B_s)ds}e^{\theta (x+B_{t+T})}F(x+B_{\cdot})\big]}{\E \big[e^{\int_0^{t+T} \xi(t-s,x+B_s)ds}e^{\theta (x+B_{t+T})}\big]}.
\end{equation}
Thus, the polymer measure $\hat{\Pb}_{t,-T,x,\theta}$ can be viewed as the Wiener measure on $C_x[0,t+T]$ tilted by the factor $e^{\int_0^{t+T} \xi(t-s,x+B_s)ds}e^{\theta (x+B_{t+T})}$. 

\begin{proof}[Proof of Lemma~\ref{l.eqphi}]
To simplify the notation in the proof, we assume that $T=0$ and consider
\begin{equation}\label{e.4221}
\phi_{\theta}(t,x;0)=\frac{\E \big[e^{\int_0^{t} \xi(t-s,x+B_s)ds}e^{\theta (x+B_{t})}B_{t}\big]}{\E \big[e^{\int_0^{t} \xi(t-s,x+B_s)ds}e^{\theta (x+B_{t})}\big]}.
\end{equation}
 
As is well-known, and can be seen through the Girsanov's theorem, the polymer measure is also the measure of 
a diffusion in a random environment: for each realization of $\xi$ and parameters $t>0,x,\theta\in\R$, we know from \cite[Lemma 4.2]{GK23} that the polymer measure $\hat{\Pb}_{t,0,x,\theta}$ is the same as the law of solution $\{X_s\}_{s\in[0,t]}$ to the SDE
\begin{equation}\label{e.4222}
dX_s=u_{\theta}(t-s,X_s;0)ds+dB_s, \quad\quad X_0=x.
\end{equation}
\blue{Thus, $B_t$ under the quenched polymer measure has the same law as $X_t-x$ (in the expression of \eqref{e.4221}, $\{x+B_s\}_{s\in[0,t]}$ is the polymer path, which has the same law as  $\{X_s\}_{s\in[0,t]}$.)
This implies that 
\[
\phi_{\theta}(t,x;0)=\E X_t-x,
\] because, from \eqref{e.4221}, we know that $\phi_\theta(t,x;0)$ is the quenched expectation of $B_t$.} (We emphasize again that the expectation $\E$ is taken only over $B$.) \blue{By the SDE \eqref{e.4222}, one can write 
\begin{equation}\label{e.881}
\E X_t-x=\E \int_0^t u_\theta(t-s,X_s;0)ds.
\end{equation}}

On the other hand,   suppose we consider the solution $\psi_\theta = \psi_\theta(t,x)$ to 
\[
\partial_t\psi_\theta=\frac12\Delta\psi_\theta+u_\theta \nabla\psi_\theta+u_\theta, \quad\quad t>0,x\in\R,
\]
with zero initial data $\psi_\theta(0,x)=0$. \blue{Through an application of It\^o's formula  to 
\[
Y_s=\psi_\theta(t-s,X_s),
\] for each realization of $u_\theta$: 
\[
dY_s=-\partial_t \psi_\theta(t-s,X_s)ds+\nabla \psi_\theta(t-s,X_s)dX_s+\frac12\Delta \psi_\theta(t-s,X_s)ds,
\]
we obtain that (with $X_s$ given by the solution to \eqref{e.4222})
\[
-\psi_\theta(t,x)=Y_t-Y_0=\int_0^t\nabla\psi(t-s,X_s)dB_s-\int_0^tu_\theta(t-s,X_s;0)ds.
\]}
Taking the expectation over $B$ in the above equation, we obtain
\[
\psi_\theta(t,x)=\E \int_0^t u_\theta(t-s,X_s;0)ds=\E X_t-x,
\]
where \eqref{e.881} is used in the second ``$=$''. This implies $\phi_\theta(t,x;0)=\psi_\theta(t,x)$ and  completes the proof.
\blue{Note that the above expression is the probabilistic representation of the solution $\psi$: on the intuitive level, the generator of the underlying Markov process $X$ is $\frac12\Delta +u_\theta\nabla$, and when  integrating the drift $u_\theta$ along the trajectory of the process itself, one obtains the displacement of the process.} 
\end{proof}

Now we note that \eqref{e.fpg} is a Fokker-Planck equation starting from constant $1$, so for each $t>-T$, the random field $\{g_{\theta}(t,x;-T)\}_{x\in\R}$ is positive, stationary, and has \blue{expectation $1$ (see \eqref{e.fkex1} below)}. Since $u_\theta$ is periodic in space, we know that $g_{\theta}$ is also periodic in space and 
\[
\int_0^1 g_{\theta}(t,x;-T)dx=1.
\]
Consider a diffusion on the unit torus $\bT$ with the random drift $-u_\theta$ 
\begin{equation}\label{e.sdeXs}
d\cX_s=-u_\theta(s-T,\cX_s;-T)ds+dB_s, \quad\quad s>0,
\end{equation}
with $\cX_0$ sampled from the uniform measure on $\bT$. It is clear that the quenched density of $\cX_s$ is given by $g_{\theta}(s-T,\cdot\,;\,-T)$. In the following sections, we will show that $g_\theta(t,x;-T)$ converges as $T\to\infty$, and it turns out that the limit can be interpreted as the Radon-Nikodym derivative which is related to a similar diffusion in a random environment.

%

\section{A one-force-one-solution principle for the Fokker-Planck equation}
\blue{In this section, we continue to consider the case when $\xi$ is smooth in space.}
\label{s.ofos}
From the previous section, we know that \[
u_{\lambda}(t,x;-T)-u_{0}(t,x;-T)=\int_0^\lambda g_{\theta}(t,x;-T)d\theta,
\]
with $g_\theta$ solving
\begin{equation}\label{e.fk3}
\begin{aligned}
\partial_t 
g_{\theta}(t,x;-T)=\frac12\Delta g_{\theta}(t,x;-T)+\nabla(u_{\theta}g_{\theta})(t,x;-T), \quad\quad t>-T,x\in\bT,
\end{aligned}
\end{equation}
with initial data $g_\theta(-T,x;-T)=1$.
The goal of this section is to show that, for any fixed $(t,x)$, the random variable  $g_{\theta}(t,x;-T)$ converges strongly as $T\to\infty$, \blue{as its dependence on the ``faraway'' random environments $u_\theta(s,\cdot;-T)$ diminishes as $t-s$ increases. This is sometimes referred to as the ``one-force-one-solution'' principle in random dynamical systems, which roughly says that the stationary solution is a measurable function of the forcing.} We will consider  more general initial data 
\[
g_\theta(-T,x;-T)=g_{\ic}(x).
\] We assume that $g_{\ic}$ is $1$-periodic and that $\int_{\bT}g_{\ic}(x)dx=1$.


Here is the main result of this section:

\begin{theorem}\label{t.ofos}
Fix any $(t,x)$, as $T\to\infty$,  $g_{\theta}(t,x;-T)$ converges in $L^p(\Omega)$ for any $p\in[1,\infty)$. Denoting the limit by $\tilde{g}_\theta(t,x)$, which takes the explicit form \eqref{e.explicitg}, we have 
\[
\EE |g_{\theta}(t,x;-T)-\tilde{g}_\theta(t,x)|^p  \leq Ce^{-c(t+T)}
\]
for some constants $C<\infty$ and $c>0$ depending only on $p$ and $R(\cdot)$.
\end{theorem}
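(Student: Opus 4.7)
The plan is to derive an explicit Hopf--Cole representation for $g_\theta(t,x;-T)$, identify the limit $\tilde g_\theta(t,x)$ via the stationary solution of the stochastic heat equation on $\bT$, and quantify the convergence in $L^p$ with exponential rate via moment estimates on the SHE. Since \eqref{e.fk3} is the Fokker--Planck equation for the diffusion \eqref{e.sdeXs}, I would start by writing
\[
g_\theta(t,x;-T) \;=\; \int_\bT p^\theta_{-T,t}(y,x)\,g_{\ic}(y)\,dy,
\]
where $p^\theta_{-T,t}(y,x)$ is the quenched transition density of \eqref{e.sdeXs}. Applying Girsanov against Brownian motion on $\bT$ and using It\^o's formula on $\log Z_\theta(r,X_r;-T)$ --- so that the stochastic integral $\int u_\theta\,dB$ is rewritten in terms of boundary values of $\log Z_\theta$ via $u_\theta = \partial_x\log Z_\theta$ and the SHE identity for $\log Z_\theta$ --- the transition density reduces to a positive expression whose numerator is built from the torus SHE propagator $\mathcal{G}_{t,-T}(x,y)$ together with the initial factor $Z_\theta(-T,y;-T)=e^{\theta y}$, and whose denominator is $Z_\theta(t,x;-T)$.

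To identify the candidate limit, the sheared noise $\xi^\theta(r,y) = \xi(r,y-\theta r)$, which has the same law as $\xi$, allows one to reduce $Z_\theta$ to an SHE with constant initial data driven by $\xi^\theta$. The one-force-one-solution principle for the SHE on $\bT$ with flat initial data --- well established in the smooth-noise setting --- provides a spacetime stationary partition function $\mathscr{Z}_\theta(t,x)$ such that $Z_\theta(t,x;-T)/\EE[Z_\theta(t,x;-T)] \to \mathscr{Z}_\theta(t,x)$ in every $L^p(\Omega)$ at exponential rate $e^{-c(t+T)}$. Substituting $\mathscr{Z}_\theta$ for $Z_\theta(t,\cdot;-T)$ in the denominator and letting the propagator flatten in the numerator produces the candidate $\tilde g_\theta(t,x)$ of \eqref{e.explicitg}; conservation of mass in \eqref{e.fk3} combined with stationarity then forces $\int_\bT \tilde g_\theta(t,x)\,dx = 1$ almost surely.

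For the quantitative $L^p$ rate I would decompose $g_\theta(t,x;-T) - \tilde g_\theta(t,x)$ into a denominator piece $Z_\theta(t,x;-T)^{-1} - \mathscr{Z}_\theta(t,x)^{-1}$ and a numerator piece. Each is bounded via H\"older using (i) the exponential $L^p$ rate for $Z_\theta \to \mathscr{Z}_\theta$ from the previous step, (ii) uniform-in-$T$ positive and negative moment bounds on $Z_\theta$ and $\mathscr{Z}_\theta$, and (iii) the spectral-gap flattening of the torus heat kernel, $\|G_{t+T}-1\|_\infty \lesssim e^{-2\pi^2(t+T)}$, which drives the unperturbed part of the numerator to a noise-independent constant exponentially fast and leverages the mean-one assumption on $g_{\ic}$.

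\textbf{Main obstacle.} The sharpest difficulty is the uniform-in-$T$ control of negative moments of $Z_\theta(t,x;-T)$ and $\mathscr{Z}_\theta(t,x)$. On the compact torus these can be obtained from a pathwise lower bound via the Feynman--Kac representation for $Z_\theta$ combined with exponential-moment control of the polymer energy $\int_{-T}^t \xi(r,x+B_r)\,dr$; stationarity of the sheared noise is what makes the bound uniform in $T$. A secondary technical point is that the It\^o step produces a residual factor $\exp(\int \partial_x u_\theta(r,X_r)\,dr)$ that must combine cleanly with $\mathcal{G}_{t,-T}$ into a positive kernel. Here the explicit identity $g_\theta = 1 + \partial_x\phi_\theta$ derived in Section~\ref{s.bfp} may provide a more direct route that bypasses having to handle this factor head on, replacing the SDE representation by an analysis of the mild formulation of the PDE for $\phi_\theta$ in \eqref{e.eqphi}.
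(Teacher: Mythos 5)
There is a genuine gap at the heart of your argument: the claimed closed form for the quenched transition density is wrong. If $N$ and $Z_\theta$ both solve the \emph{forward} stochastic heat equation driven by the same noise, then a direct computation shows that the ratio $g=N/Z_\theta$ solves the advection equation $\partial_t g=\frac12\Delta g+u_\theta\,\partial_x g$, not the conservative Fokker--Planck equation $\partial_t g=\frac12\Delta g+\partial_x(u_\theta g)$; the two differ exactly by the zeroth-order term $g\,\partial_x u_\theta$. Equivalently, the diffusion \eqref{e.sdeXs} run \emph{forward} in time with drift $-u_\theta$ is not a Doob $h$-transform with respect to the forward propagator: the polymer/Gibbs representation (used in the paper via \cite[Lemma 4.2]{GK23}) applies only to the time-reversed drift $+u_\theta(t-s,\cdot)$, which is why the Girsanov/It\^o step you sketch leaves behind the path-dependent weight $\exp\big(\int\partial_x u_\theta(r,X_r)\,dr\big)$. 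That factor does not ``combine cleanly'' with $\G_{t,-T}$; controlling it is essentially the whole problem, and no representation of $g_\theta(t,x;-T)$ as $\big(\int_\bT\G_{t,-T}(x,y)e^{\theta y}g_{\ic}(y)dy\big)\big/Z_\theta(t,x;-T)$ (or any endpoint-only ratio) can be correct --- indeed the true limit \eqref{e.explicitg} is an integral over all intermediate times $s\in(0,\infty)$ of $\scU_\theta\,\bar\rho_\theta\,[\U_\theta-\scU_\theta]$, not a two-point ratio. A second error is the convergence input you invoke: $Z_\theta(t,x;-T)/\EE Z_\theta(t,x;-T)$ does \emph{not} converge to a stationary limit in $L^p$ (the fluctuations of $\log Z_\theta$ on the torus grow like $\sqrt{t+T}$, cf.\ \cite{GK21}); one-force-one-solution holds only for normalized quantities such as the endpoint density $\rhof$, increments of $\log Z_\theta$, or $u_\theta=\partial_x\log Z_\theta$ itself.

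Your fallback remark --- work with $g_\theta=1+\partial_x\phi_\theta$ from Section~\ref{s.bfp} instead --- is in fact the route the paper takes, but it is not a cosmetic substitute for the missing ratio formula. The proof represents $\phi_\theta$ through the backward polymer measure, writes $g_\theta$ via the periodized midpoint density $\bar\rho_\theta$ as in \eqref{e.7191}, uses the key identity $\partial_x\bar\rho_\theta(0,x;s,y;-T)=\bar\rho_\theta(0,x;s,y;-T)[\U_\theta(0,x;-s,y)-u_\theta(0,x;-T)]$ (Lemma~\ref{l.debarrho}), and then needs three quantitative inputs you do not supply: moment bounds on $u_\theta$, $\bar\rho_\theta$, $\U_\theta$ (Lemmas~\ref{l.bdu}--\ref{l.bdU}, via the shear reduction of Lemma~\ref{l.reduction}), exponential stabilization of the midpoint density in $T$ (Lemma~\ref{l.mixpolymer}), and the one-force-one-solution principle for Burgers giving $\|\U_\theta(0,x;-s,y)-u_\theta(0,x;-T)\|_p\leq Ce^{-cs}$ (Proposition~\ref{p.burgersofos}). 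It is the exponential smallness in the intermediate time $s$, not a spectral-gap flattening of the heat kernel or a hypothetical stabilization of $Z_\theta/\EE Z_\theta$, that makes the $s$-integral convergent and yields the rate $e^{-c(t+T)}$.
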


As discussed in the introduction, to prove the result in Theorem~\ref{t.ofos} for general drifts is a difficult task, even in the periodic setting. As a matter of fact, we do not know any other example of a non-Gaussian drift in which such a result is proved. In our case with the drift given by the solution to the Burgers equation, we will make full use of the Hopf-Cole transformation and the connection to the stochastic heat equation (hence to the directed polymer through the Feynman-Kac representation). 

Throughout the rest of the section, we will assume without loss of generality that $t=0$.

\subsection{Probabilistic representation}
To prove Theorem~\ref{t.ofos}, we start from a probabilistic representation, which relies on the Feynman-Kac representation of the solution to the stochastic heat equation. A somewhat surprising fact is that, through the Hopf-Cole transformation and the probabilistic representation, the one-force-one-solution principle for $g_\theta$, as stated in Theorem~\ref{t.ofos}, comes from the one-force-one-solution principle for $u_\theta$.

First, as in the proof of Lemma~\ref{l.eqphi}, we define the diffusion $X$ through the SDE (note that $t=0$ in this section)
\begin{equation}\label{e.Xnew}
dX_s=u_{\theta}(-s,X_s;-T)dt+dB_s, \quad\quad X_0=x,
\end{equation}
and $\phi_\theta$ as the solution to
\begin{equation}\label{e.eqphithetanew}
\partial_t\phi_\theta(t,x;-T)=\frac12\Delta\phi_\theta(t,x;-T)+u_{\theta}(t,x;-T)[\nabla\phi_{\theta}(t,x;-T)+1].
\end{equation}
Since the initial data for $g_\theta$ is $g_\theta(-T,x;-T)=g_{\ic}(x)$, to maintain the identity
\[
g_\theta=1+\partial_x\phi_\theta,\] we assume that $\phi_\theta(-T,x;-T)=\phi_{\ic}(x)$ with $\phi_{\ic}'(x)=g_{\ic}(x)-1$. Note that the function $\phi_{\ic}$ is $1$-periodic because of the assumption of $\int_0^1 g_{\ic}(x)dx=1$.

Following the proof of Lemma~\ref{l.eqphi} verbatim, we obtain the following probabilistic representation of the solution to \eqref{e.eqphithetanew}:
\[
\phi_{\theta}(0,x;-T)=\E\phi_{\ic}(X_T)+\E X_T-x.
\]
As discussed in the proof of Lemma~\ref{l.eqphi}, we know that the (quenched) law of the process $\{X_s\}_{s\in[0,T]}$ is the (quenched) polymer measure $\hat{\Pb}_{0,-T,x,\theta}$. It turns out that by the polymer Gibbs measure formulation,  one can write the density of $X_s$ \blue{explicitly}.

Denote $\cZ_{t,s}(x,y)$ as the propagator for the stochastic heat equation, so for any $(s,y)\in\R^2$, 
\begin{equation}\label{e.defZtsxy}
\begin{aligned}
&\partial_t \cZ_{t,s}(x,y)=\frac12\Delta_x \cZ_{t,s}(x,y)+\xi(t,x)\cZ_{t,s}(x,y), \quad\quad t>s,x\in\R,\\
&\cZ_{s,s}(x,y)=\delta_y(x).
\end{aligned}
\end{equation}
\blue{In our case of a smooth noise,  
it admits the Feynman--Kac representation (with the expectation $\E$ below only on the standard Brownian motion $B$)
\begin{equation}\label{e.4231}
\begin{aligned}
\cZ_{t,s}(x,y)&=\E \left[e^{\int_0^{t-s}\xi(t-\ell,x+B_\ell)d\ell-\frac12R(0)(t-s)}\delta_y(x+B_{t-s})\right]\\
&=q_{t-s}(x-y)\E\left[e^{\int_0^{t-s}\xi(t-\ell,x+B_\ell)d\ell-\frac12R(0)(t-s)}\,|\, x+B_{t-s}=y\right],
\end{aligned}
\end{equation}
where $q$ is the standard Gaussian kernel.}

Let $\rho_{\theta}(0,x;s,\cdot;-T)$ be the quenched density of $X_s$. By the Gibbs measure formulation in \eqref{e.defpolymer}, we have 
\begin{lemma}\label{l.exrho}
For any $y\in\R$ and $s\in(0,T]$, we have 
\[
\begin{aligned}
\rho_{\theta}(0,x;s,y;-T)&=\frac{\E[e^{\int_0^T \xi(-t,x+B_t)dt}e^{\theta (x+B_T)}\delta_y(x+B_s) ]}{\E[e^{\int_0^T \xi(-t,x+B_t)dt}e^{\theta(x+ B_T)}]}\\
&=\frac{\cZ_{0,-s}(x,y)\int_{\R} \cZ_{-s,-T}(y,w)e^{\theta w}dw}{\int_{\R}\cZ_{0,-T}(x,w)e^{\theta w}dw}.
\end{aligned}
\]
\end{lemma}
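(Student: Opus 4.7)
The first equality is essentially by definition. Taking $F(\omega)=\delta_y(\omega(s))$ in the Gibbs measure formula~\eqref{e.defpolymer} with $t=0$ picks out the quenched density of the polymer path at position $y$ at time $s$. Since the law of $\{X_s\}$ from~\eqref{e.Xnew} coincides with $\hat{\Pb}_{0,-T,x,\theta}$ (as already used in the proof of Lemma~\ref{l.eqphi}), the middle expression equals $\rho_\theta(0,x;s,y;-T)$.

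For the second equality, the plan is to use the Markov property of Brownian motion at time $s$ together with the Feynman--Kac formula~\eqref{e.4231}. Splitting $\int_0^T = \int_0^s + \int_s^T$ in the exponent and conditioning on $\F_s=\sigma(B_u:u\leq s)$, one uses that $\{B_{s+u}-B_s\}_{u\geq 0}$ is an independent Brownian motion $\tilde B$. On the event $\{x+B_s=y\}$ we may substitute $x+B_{s+u}=y+\tilde B_u$, and the numerator factors as
\[
\E\bigl[e^{\int_0^s\xi(-t,x+B_t)\,dt}\,\delta_y(x+B_s)\bigr]\cdot \E\bigl[e^{\int_0^{T-s}\xi(-s-u,y+\tilde B_u)\,du}\,e^{\theta(y+\tilde B_{T-s})}\bigr].
\]
By~\eqref{e.4231}, the first factor equals $e^{R(0)s/2}\cZ_{0,-s}(x,y)$. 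Inserting $1=\int_\R \delta_w(y+\tilde B_{T-s})\,dw$ into the second factor turns it into $e^{R(0)(T-s)/2}\int_\R \cZ_{-s,-T}(y,w)e^{\theta w}\,dw$. The identical decomposition applied to the denominator (without the $\delta_y$ constraint) yields $e^{R(0)T/2}\int_\R \cZ_{0,-T}(x,w)e^{\theta w}\,dw$. The common prefactor $e^{R(0)T/2}$ cancels between numerator and denominator, producing the claimed ratio.

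The main obstacle is really just bookkeeping: one must carefully match the backward-in-time noise $\xi(-t,\cdot)$ appearing in the polymer exponent against the forward-in-time convention for $\cZ_{t,s}$ in~\eqref{e.4231}, and track the It\^o correction $\tfrac12 R(0)(t-s)$ so that it cancels consistently between the two sides. A minor rigor issue is the formal density $\delta_y(x+B_s)$, which is handled in the usual way by first testing against a bounded continuous function of $B_s$, performing the Markov-property factorization, and then identifying the resulting density; spatial smoothness of the propagators, guaranteed by the smoothness of $\xi$ assumed in this section, ensures this causes no trouble.
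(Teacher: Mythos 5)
Your proposal is correct and follows essentially the same route as the paper, whose proof is simply the one-line observation that the identity follows from the polymer-measure definition \eqref{e.defpolymer} and the Feynman--Kac formula; your write-up just makes explicit the Markov-property factorization and the cancellation of the $\tfrac12R(0)$ correction terms, both of which check out.
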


\begin{proof}
The result comes directly from the definition of the polymer measure in \eqref{e.defpolymer} and the Feynman-Kac formula. 
\end{proof}

Using the quenched density, one can express $\phi_\theta$ as 
\[
\begin{aligned}
&\phi_{\theta}(0,x;-T)\\
&=\E\phi_{\ic}(X_T)+\E X_T-x=\E\phi_{\ic}(X_T)+\E \int_0^T  u_{\theta}(-s,X_s;-T)ds\\
&=\int_{\R} \phi_{\ic}(y)\rho_{\theta}(0,x;T,y;-T)dy+\int_0^T\int_{\R} u_{\theta}(-s,y;-T)\rho_{\theta}(0,x;s,y;-T)dyds.
\end{aligned}
\]
Note that in the above expression, we have periodically extended $\phi_{\ic}(\cdot)$ and $u_{\theta}(-s,\cdot;-T)$ so that they are defined on $\R$. By the fact that $g_\theta=1+\partial_x\phi_\theta$, we have 
\[
\begin{aligned}
    g_{\theta}(0,x;-T)=1&+\int_{\R} \phi_{\ic}(y)\partial_x\rho_{\theta}(0,x;T,y;-T)dy\\
&+\int_0^T\int_{\R} u_{\theta}(-s,y;-T)\partial_x\rho_{\theta}(0,x;s,y;-T)dyds.
\end{aligned}
\]
Therefore, to prove Theorem~\ref{t.ofos}, it suffices to study $\partial_x \rho_{\theta}(0,x;s,y;-T)$ and show that its dependence on the faraway environment is exponentially small. 

Since $\phi_{\ic}(\cdot)$ and $u_{\theta}(-s,\cdot;-T)$ are both $1$-periodic, we define the periodized version of $\rho_\theta$. That is,  we consider $X$ as a diffusion on the torus, and study its density on the torus. For any $y\in[0,1)$, define 
\begin{equation}\label{e.defrhobar}
\bar{\rho}_{\theta}(0,x;s,y;-T)=\sum_{n\in\Z}\rho_{\theta}(0,x;s,y+n;-T),
\end{equation}
so we can also write 
\begin{equation}\label{e.phibarrho}
\begin{aligned}
g_{\theta}(0,x;-T)=1&+\int_0^1 \phi_{\ic}(y)\partial_x\bar{\rho}_{\theta}(0,x;T,y;-T)dy\\
&+\int_0^T\int_0^1 u_{\theta}(-s,y;-T)\partial_x\bar{\rho}_{\theta}(0,x;s,y;-T)dyds.
\end{aligned}
\end{equation}
At this moment, whether the two integrals in \eqref{e.phibarrho} \blue{are well-defined or not} is unclear, and we will show later that this is indeed the case.

To show the convergence of $g_{\theta}(0,x;-T)$ as $T\to\infty$, the integrand in the above expression must be small for $s\gg1$. Since $u_\theta(-s,y;-T)$ is $O(1)$, one needs to show that $\partial_x\bar{\rho}_{\theta}(0,x;s,y;-T)$ is small for large $s$. The intuition is that, as we are on the torus, the polymer density mixes exponentially fast so that the mid-point density for $s\gg1$ does not depend much on the starting point $x$. As a result, a differentiation in $x$ leads to a small quantity. The following lemma derives another expression of $\partial_x\bar{\rho}_{\theta}(0,x;s,y;-T)$, through which we see at least formally that this is the case when $s$ is large.

\begin{lemma}\label{l.debarrho}
We have 
\[
\partial_x \bar{\rho}_{\theta}(0,x;s,y;-T)=\bar{\rho}_{\theta}(0,x;s,y;-T)[\U_\theta(0,x;-s,y)-u_\theta(0,x;-T)],
\]
with 
\begin{equation}\label{e.defU}
\begin{aligned}
\U_\theta(0,x;-s,y):=\partial_x \log \sum_{n\in\Z} \cZ_{0,-s}(x,y+n)e^{\theta n}.
\end{aligned}
\end{equation}
\end{lemma}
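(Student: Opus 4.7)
The plan is to derive an explicit closed-form expression for $\bar\rho_\theta(0,x;s,y;-T)$ by exploiting the periodicity of $\xi$, and then to recognize the two factors that depend on $x$ after taking $\partial_x\log$. Starting from Lemma~\ref{l.exrho}, the first step is to use the identity
\[
\cZ_{-s,-T}(y+n,w)=\cZ_{-s,-T}(y,w-n),\qquad n\in\Z,
\]
which holds pathwise (for the same realization of $\xi$) because $\xi(t,\cdot)$ is $1$-periodic: the function $(t,x)\mapsto \cZ_{t,-T}(x+n,y+n)$ solves the same stochastic heat equation with the same initial delta data as $\cZ_{t,-T}(x,y)$, so the two must agree. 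Applying this identity inside the numerator of $\rho_\theta(0,x;s,y+n;-T)$ and performing the change of variables $w\mapsto w-n$ in the $w$-integral pulls out a factor of $e^{\theta n}$.

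The next step is to sum over $n\in\Z$. Because the $y$-integral $\int_{\R}\cZ_{-s,-T}(y,w)e^{\theta w}\,dw$ is now independent of $n$, the sum factors and one obtains
\[
\bar\rho_\theta(0,x;s,y;-T)=\frac{\bigl(\sum_{n\in\Z}\cZ_{0,-s}(x,y+n)e^{\theta n}\bigr)\,\int_{\R}\cZ_{-s,-T}(y,w)e^{\theta w}\,dw}{\int_{\R}\cZ_{0,-T}(x,w)e^{\theta w}\,dw}.
\]
The middle factor in the numerator has no $x$-dependence, so it disappears after applying $\partial_x\log$. By the definition \eqref{e.defU}, the first factor contributes exactly $\U_\theta(0,x;-s,y)$, while by the Feynman--Kac representation and the Hopf--Cole transformation the denominator equals $Z_\theta(0,x;-T)$ up to an $x$-independent factor, so its logarithmic derivative is $u_\theta(0,x;-T)$. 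Combining yields
\[
\partial_x\log\bar\rho_\theta(0,x;s,y;-T)=\U_\theta(0,x;-s,y)-u_\theta(0,x;-T),
\]
which is equivalent to the claim.

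The computation itself is elementary once the periodicity identity for $\cZ$ is in hand; the only conceptual point worth checking carefully is precisely that periodicity identity. It is a pathwise (not merely distributional) statement and relies on the fact that the driving noise $\xi(t,x)$ is $1$-periodic in space rather than just stationary, so this is the one step where some care is needed to ensure the sums are well-defined (e.g., via the decay of $\cZ_{0,-s}(x,y+n)$ in $n$ from the heat kernel, so that the sum defining $\bar\rho_\theta$ and its derivative can legitimately be interchanged with $\partial_x$). This convergence of sums and exchange of derivative and sum is the main technical, but routine, obstacle.
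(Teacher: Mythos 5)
Your proposal is correct and follows essentially the same route as the paper: starting from Lemma~\ref{l.exrho}, using the periodicity identity $\cZ_{-s,-T}(y+n,w)=\cZ_{-s,-T}(y,w-n)$ to pull out the factor $e^{\theta n}$, summing over $n$, and then taking $\partial_x\log$ so that the $x$-independent middle factor drops out and the two remaining factors give $\U_\theta(0,x;-s,y)$ and $u_\theta(0,x;-T)$. Your extra remarks on justifying the pathwise periodicity identity and the interchange of $\partial_x$ with the sum are sensible refinements of the same argument.
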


\begin{proof}
By Lemma~\ref{l.exrho}, we can write the density as
\[
\begin{aligned}
\rho_{\theta}(0,x;s,y+n;-T)&=\frac{\cZ_{0,-s}(x,y+n)\int_{\R} \cZ_{-s,-T}(y+n,w)e^{\theta w}dw}{\int_{\R}\cZ_{0,-T}(x,w)e^{\theta w}dw}\\
&=\frac{\cZ_{0,-s}(x,y+n)e^{\theta n}\int_{\R} \cZ_{-s,-T}(y,w)e^{\theta w}dw}{\int_{\R}\cZ_{0,-T}(x,w)e^{\theta w}dw}.
\end{aligned}
\]
In the second ``$=$'', we used the spatial periodicity of $\xi$, which implies that 
\[
\cZ_{-s,-T}(y+n,w)=\cZ_{-s,-T}(y,w-n).
\] 
Therefore, 
\[
\begin{aligned}
    \partial_x \log \sum_{n\in\Z} \rho_{\theta}(0,x;s,y+n;-T)&=\partial_x \log \sum_{n\in\Z} \cZ_{0,-s}(x,y+n)e^{\theta n}\\
&\qquad-\partial_x \log \int_{\R}\cZ_{0,-T}(x,w)e^{\theta w}dw,
\end{aligned}
\]
which completes the proof.
\end{proof}

One should view $\U_\theta(0,x;-s,y)$ as the solution to the stochastic Burgers equation with the initial data of ``mean'' $\theta$ at time $-s$. This can be seen from \eqref{e.defU} and the fact that, after the Hopf-Cole transformation, the initial data (at time $-s$) for the corresponding stochastic heat equation is $\sum_n e^{\theta n}\delta_{y+n}$. Thus, by the one-force-one-solution principle and the contraction of the Burgers equation on the torus, for $s\gg1$ and $T\gg1$, we expect $\U_\theta(0,x;-s,y)-u_{\theta}(0,x;-T)$ to be exponentially small in $s$.

Now, combining  \eqref{e.phibarrho} and Lemma~\ref{l.debarrho}, we have
\begin{equation}\label{e.7191}
\begin{aligned}
g_{\theta}&(0,x;-T)=1+\int_0^1 \phi_{\ic}(y)\bar{\rho}_{\theta}(0,x;T,y;-T)[\U_\theta(0,x;-T,y)-u_\theta(0,x;-T)]dy\\
&+\int_0^T\int_0^1u_{\theta}(-s,y;-T)\bar{\rho}_{\theta}(0,x;s,y;-T) [\U_\theta(0,x;-s,y)-u_\theta(0,x;-T)]dyds,
\end{aligned}
\end{equation}
which will be our starting point to prove Theorem~\ref{t.ofos}.

\subsection{Moment estimates}
In this section, we will prove several moment estimates on the terms appearing in \eqref{e.7191}, including the solution to the Burgers equation, the mid-point quenched density etc.

We introduce a few more notations. For any $t>s$ and $x,y\in\R$, define 
\begin{equation}\label{e.defGtsxy}
\G_{t,s}(x,y)=\sum_{n\in\Z} \cZ_{t,s}(x+n,y)=\sum_{n\in\Z} \cZ_{t,s}(x,y+n),
\end{equation} which should be viewed as the propagator of the stochastic heat equation on the torus, and let 
\[
\G_{t,s}(x,-)=\int_0^1 \G_{t,s}(x,y)dy.
\] 
\blue{Note that $\G_{t,s}(x,y)$ also admits a probabilistic representation, and this can be obtained in two ways: (i) using the above definition together with the probabilistic representation of $\cZ_{t,s}(x,y)$ given in \eqref{e.4231}; (ii) replacing the Brownian motion in the expression of \eqref{e.4231} with its projection on $\bT$.}
 We also introduce the following notations on the endpoint densities
of the ``forward'' and ``backward'' polymer path on a cylinder. 
Let ${\mathcal M}_1(\bT)$ be the set of all Borel probability
measures  on $\bT$. 
For
any $\nu\in{\mathcal M}_1(\bT)$ and $t>s$, we let 
\begin{equation}\label{e.forwardbackward}
\begin{aligned}
&\rhof(t,x;s,\nu)=\frac{\int_{\bT} \G_{t,s}(x,y)\nu(dy)}{\int_{\bT^2}\G_{t,s}(x',y)\nu(dy)dx'}, \quad x\in\bT,\\
&\rhob(t,\nu;s,y)=\frac{\int_{\bT}\G_{t,s}(x,y)\nu(dx)}{\int_{\bT^2}\G_{t,s}(x,y')\nu(dx)dy'},\quad y\in\bT.
\end{aligned}
\end{equation}
Here the subscripts ``$\mathrm{f}$'' and ``$\mathrm{b}$'' stand for ``forward'' and ``backward'' respectively. When $\nu$ is a Dirac measure on $\bT$, we   write
$$\rhof(t,x;s,y)=\rhof(t,x;s,\delta_y)\quad \mbox{and}
\quad \rhob(t,x;s,y)=\rhob(t,\delta_x;s,y).
$$

For any $\theta\in\R$, define 
\[
\xi^\theta(t,x)=\xi(t,x-\theta t), \quad\quad (t,x)\in\R^2.
\]
By the fact that $\xi$ is a Gaussian process that is white in time and stationary in space, we know that the two processes $\xi^\theta$ and $\xi$ have the same law. Let $\cZ^\theta,\G^\theta$ be the propagators of the SHE on the line and the torus respectively, driven by $\xi^\theta$.  

\blue{The following lemma helps to reduce all the moment estimates on $u_\theta,\U_\theta, \bar{\rho}_\theta$ to the $\theta=0$ case, because by the previous discussion we know that $\G^\theta$ has the same law as $\G=\G^0$.}

\begin{lemma}\label{l.reduction}
We have
\begin{equation}\label{e.9271}
\begin{aligned}
&u_\theta(0,x;-T)=\theta+\partial_x  \log \int_0^1 \G^\theta_{0,-T}(x,y)dy,\\
&\U_{\theta}(0,x;-s,y)=\theta+\partial_x  \log \G^\theta_{0,-s}(x,y-\theta s),\\
&\bar{\rho}_\theta(0,x;s,y;-T)
=\frac{\G_{0,-s}^\theta(x,y-\theta s)\G_{-s,-T}^\theta(y-\theta s,-)}{\G_{0,-T}^\theta(x,-)}.
\end{aligned}
\end{equation}
\end{lemma}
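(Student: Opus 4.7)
All three identities follow from a single Girsanov-type calculation on the driving Brownian motion inside the Feynman--Kac representation. The plan is: write the object on the left-hand side as a Feynman--Kac expectation in $\xi$ (using Hopf--Cole for (1)--(2) and Lemma~\ref{l.exrho} for (3)), then apply the drift shift $B_\ell = \theta\ell + W_\ell$. The Girsanov density $e^{-\theta B_{\cdot} + \theta^2(\cdot)/2}$ absorbs the $e^{\theta(\cdot)}$ factors present in each expression, and the transformed integrand $\xi(-\ell,x+\theta\ell+W_\ell)$ becomes the sheared noise $\xi^\theta(-\ell,x+W_\ell)$ via the defining relation $\xi^\theta(t,z)=\xi(t,z-\theta t)$. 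The resulting Feynman--Kac expression corresponds to the $\xi^\theta$-SHE on $\R$ with mean-zero initial data, and integrating over the line collapses to the torus propagator $\G^\theta$.

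For (1), Hopf--Cole gives $u_\theta(0,x;-T)=\partial_x \log Z_\theta(0,x;-T)$ with
\[
Z_\theta(0,x;-T) = \E\bigl[e^{\int_0^T \xi(-\ell,x+B_\ell)\,d\ell - R(0)T/2}\,e^{\theta(x+B_T)}\bigr].
\]
The Girsanov shift factors out $e^{\theta x + \theta^2 T/2}$ times the $\xi^\theta$-SHE with initial data $\1$ evaluated at $(0,x)$, whose integral over the line is $\G^\theta_{0,-T}(x,-)$. Taking $\partial_x \log$ concludes.

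For (2), write $\sum_n \cZ_{0,-s}(x,y+n)e^{\theta n}$ as a single Feynman--Kac expectation against the generalized initial data $v(z)=\sum_n e^{\theta n}\delta_{y+n}(z)$. The Girsanov density evaluated on $\{B_s=y+n-x\}$ produces $e^{-\theta(y+n-x)+\theta^2 s/2}$, whose $e^{-\theta n}$ cancels the $e^{\theta n}$ in $v$; the surviving sum $\sum_n\delta_{y+n}(x+\theta s+W_s)$ is the periodic delta on $\bT$ centered at $y-\theta s$ (the shift arising from $B_s = \theta s + W_s$). The $\xi^\theta$-expectation against this periodic delta is $\G^\theta_{0,-s}(x,y-\theta s)$, so
\[
\sum_n \cZ_{0,-s}(x,y+n)e^{\theta n} = e^{-\theta(y-x)+\theta^2 s/2}\,\G^\theta_{0,-s}(x,y-\theta s),
\]
and $\partial_x \log$ yields the stated $\theta+\partial_x\log \G^\theta_{0,-s}(x,y-\theta s)$.

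For (3), substitute Lemma~\ref{l.exrho} and use the spatial periodicity $\cZ_{-s,-T}(y+n,w)=\cZ_{-s,-T}(y,w-n)$ together with $w\mapsto w+n$ in the $w$-integral to pull $e^{\theta n}$ outside the sum. The numerator of $\bar\rho_\theta$ then factorizes as $[\sum_n \cZ_{0,-s}(x,y+n)e^{\theta n}]\cdot Z_\theta(-s,y;-T)$, where the bracket is handled by (2) and the denominator $Z_\theta(0,x;-T)$ by (1). For $Z_\theta(-s,y;-T)$ the same Girsanov argument applies, but since the Feynman--Kac time variable is now $-s-\ell$, matching $\xi^\theta(-s-\ell,z)=\xi(-s-\ell,z+\theta(s+\ell))$ against $\xi(-s-\ell,y+\theta\ell+W_\ell)$ forces $z=y-\theta s+W_\ell$, giving
\[
Z_\theta(-s,y;-T) = e^{\theta y + \theta^2(T-s)/2}\,\G^\theta_{-s,-T}(y-\theta s,-).
\]
After substituting and checking that the three exponential prefactors combine to $1$, the third identity follows. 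The main bookkeeping challenge --- and the only real obstacle --- is this interaction between the definition $\xi^\theta(t,z)=\xi(t,z-\theta t)$ and a nonzero evaluation time: at $t=0$ no spatial shift appears, but at $t=-s$ the correct shift is exactly $-\theta s$, which is precisely what aligns the arguments of $\G^\theta_{0,-s}$ and $\G^\theta_{-s,-T}$ in the third identity.
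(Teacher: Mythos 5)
Your proposal is correct and follows essentially the same route as the paper: Feynman--Kac representations combined with the Cameron--Martin/Girsanov drift shift $B_\ell=\theta\ell+W_\ell$, which turns the tilt $e^{\theta(\cdot)}$ into the sheared noise $\xi^\theta$ and the torus propagator $\G^\theta$ (the paper handles the third identity by periodizing the formula of Lemma~\ref{l.exrho} exactly as you do, only writing it directly in terms of $\cZ^\theta$ rather than through your explicit prefactor bookkeeping, which you carried out correctly, including the $-\theta s$ shift at evaluation time $-s$).
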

\begin{proof}
The proofs below all rely on the Feynman-Kac formula and the Cameron-Martin theorem.  For $u_\theta(0,x;-T)$, we have
\[
\begin{aligned}
u_\theta(0,x;-T)&=\partial_x \log  \E e^{\int_0^T\xi(-t,x+B_t)dt}e^{\theta (x+B_T)}\\
&=\theta+\partial_x \log  \E e^{\int_0^T\xi(-t,x+B_t)dt}e^{\theta B_T-\frac12\theta^2 T}\\
&=\theta+\partial_x \log \E e^{\int_0^T \xi(-t,\theta t+x+B_t)dt}\\
&=\theta+\partial_x \log \E e^{\int_0^T\xi^\theta(-t,x+B_t)dt-\frac12R(0)T}=\theta+\partial_x  \log \int_0^1 \G^\theta_{0,-T}(x,y)dy.
\end{aligned}
\]
 For the last ``$=$'', we used the fact that $\int_0^1 \G^\theta_{0,-T}(x,y)dy$ solves the stochastic heat equation driven by $\xi^\theta$, with the constant initial data $1$ at time $-T$. \blue{Here we recall that the function $R(\cdot)$ appeared in the above expression is the spatial covariance function of $\xi$ and $\xi^\theta$}.

Similarly, by \eqref{e.defU} we can write $\U_{\theta}(0,x;-s,y)$ as 
\[
\begin{aligned}
\U_{\theta}(0,x;-s,y)&=\partial_x \log  \sum_n \cZ_{0,-s}(x,y+n)e^{\theta n}\\
&=\partial_x \log \sum_n \E \big[e^{\int_0^s\xi(-t,x+B_t)dt}e^{\theta n}\delta_{y+n}(x+B_s)\big]\\
&=\partial_x \log \sum_n \E \big[e^{\int_0^s\xi(-t,x+B_t)dt}e^{\theta (x+B_s-y)}\delta_{y+n}(x+B_s)\big].
\end{aligned}
\]
\blue{In the last ``='', we used the definition of the  Dirac function so that $e^{\theta n}$ can be replaced by $e^{\theta(x+B_s-y)}$. More precisely, the expectations above can be written as the   expectations with respect to a Brownian bridge, multiplied by the corresponding transition density of the Brownian motion. In this way, one sees immediately that the last ``='' holds.} 
Now we apply the Cameron-Martin theorem again, the above expression is equal to
\[
\begin{aligned}
&\theta+\partial_x \log \sum_n \E \big[e^{\int_0^s\xi(-t,x+\theta t+B_t)dt}\delta_{y+n}(x+\theta s+B_s)\big]\\
&=\theta+\partial_x \log \E \big[e^{\int_0^s\xi^\theta(-t,x+B_t)dt-\frac12R(0)s}\sum_n\delta_{y-\theta s+n}(x+B_s)\big].
\end{aligned}
\]
Using $\G^\theta$, the expectation on $B$ in the above expression can be rewritten as 
\[
\E \big[e^{\int_0^s\xi^\theta(-t,x+B_t)dt-\frac12R(0)s}\sum_n\delta_{y-\theta s+n}(x+B_s)\big]= \G^\theta_{0,-s}(x,y-\theta s).
\]

For the mid-point   density,  recall that $\bar{\rho}_\theta(0,x;s,y;-T)=\sum_n \rho_\theta (0,x;s,y+n;-T)$, and
\[
\rho_\theta(0,x;s,y+n;-T)=\frac{\E[e^{\int_0^T \xi(-t,x+B_t)dt}e^{\theta (x+B_T)}\delta_{y+n}(x+B_s) ]}{\E [e^{\int_0^T \xi(-t,x+B_t)dt}e^{\theta(x+ B_T)}]}.
\]
By the same discussion as in the previous case, we have
\[
\begin{aligned}
\bar{\rho}_\theta(0,x;s,y;-T)&=\sum_n \frac{\cZ_{0,-s}^\theta(x,y+n-\theta s)\int_{\R} \cZ_{-s,-T}^\theta(y+n-\theta s,w)dw}{\int_{\R} \cZ_{0,-T}^\theta(x,w)dw}\\
&=\frac{\G_{0,-s}^\theta(x,y-\theta s)\G_{-s,-T}^\theta(y-\theta s,-)}{\G_{0,-T}^\theta(x,-)}.
\end{aligned}
\]
In the second ``$=$'', we used the fact that 
\[
\int_{\R} \cZ_{-s,-T}^\theta(y+n-\theta s,w)dw=\int_{\R} \cZ_{-s,-T}^\theta(y-\theta s,w-n)dw=\G_{-s,-T}^\theta(y-\theta s,-).
\]
The proof is complete.
\end{proof}

Next, we derive the moment estimates on  the solutions to the Burgers equation with constant initial data: 
\begin{lemma}\label{l.bdu}
\blue{For any $p\in [ 1,\infty)$, there exists a constant $C=C(p,R)$ such that} \[
\EE |u_{\theta}(t,0;0)|^p \leq C(|\theta|^p+1).
\]
\end{lemma}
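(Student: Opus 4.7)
First I would reduce to the case $\theta=0$. By Lemma~\ref{l.reduction},
\[
u_\theta(0,x;-T) \;=\; \theta + \partial_x \log \int_0^1 \G^{\theta}_{0,-T}(x,y)\,dy.
\]
Because the sheared noise $\xi^\theta$ has the same joint law as $\xi$, the random field $\G^\theta$ has the same joint law as $\G$, so the right-hand side above equals in law $\theta + u_0(0,x;-T)$. Combining this with the time-stationarity of $\xi$ (which gives $u_\theta(t,0;0)\stackrel{\text{law}}{=} u_\theta(0,0;-t)$), I obtain
\[
u_\theta(t,0;0) \;\stackrel{\text{law}}{=}\; \theta + u_0(t,0;0).
\]
The triangle inequality $|a+b|^p\leq 2^{p-1}(|a|^p+|b|^p)$ then reduces the claim to showing the uniform bound $\sup_{t\geq 0}\EE|u_0(t,0;0)|^p <\infty$.

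For this uniform-in-$t$ bound I would appeal to the one-force-one-solution principle for the Burgers equation on the torus, established in Appendix~\ref{s.aburgers}: it yields exponential convergence
\[
\|u_0(t,0;0)-\scU_0(t,0)\|_{L^p(\Omega)} \;\leq\; C e^{-ct},
\]
together with the $L^p$-integrability of the stationary solution $\scU_0(0,0)$. Combined with the stationarity of $\scU_0$ in time, these give $\sup_{t\geq t_0}\EE|u_0(t,0;0)|^p<\infty$ for any $t_0>0$. The complementary small-$t$ regime is handled directly from the mild formulation of \eqref{e.burgers} with the constant initial datum $u_0(0,\cdot;0)\equiv 0$ and standard short-time moment estimates for the SHE on the torus via the Cole--Hopf transform.

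The principal technical obstacle, if one tried to avoid OFOS and prove the uniform bound on $u_0(t,0;0)$ directly, is the following. Writing $u_0 = \partial_x G/G$ with $G(t,x):=\G_{t,0}(x,-)$, a naive H\"older split
\[
\|u_0(t,0;0)\|_{L^p} \;\leq\; \|\partial_x G(t,0)\|_{L^{2p}}\,\|G(t,0)^{-1}\|_{L^{2p}}
\]
cannot produce a uniform-in-$t$ bound: both factors grow like $e^{ct}$ (a reflection of the ``free-energy'' growth $\EE G(t,0)^{q}\sim e^{c_q t}$ of the SHE with constant initial data), while their ratio remains $O(1)$. Extracting this cancellation directly would require working with $\log G$ modulo spatial constants, which is exactly what the OFOS argument for Burgers does at the level of $u_0$; this is why routing the proof through the OFOS principle is the natural strategy.
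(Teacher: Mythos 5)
Your reduction to $\theta=0$ via Lemma~\ref{l.reduction} is exactly the paper's first step, and the split into a large-time and a bounded-time regime is reasonable. The gap is in the large-time regime: you invoke ``the $L^p$-integrability of the stationary solution $\scU_0(0,0)$'' as if it were supplied by Appendix~\ref{s.aburgers}, but the appendix does not establish it independently. In the paper, the moments of $\scU_0$ are obtained \emph{from} the uniform-in-$t$ bound $\sup_{t\ge 0}\EE|u_0(t,0;0)|^p\le C$ (Lemma~\ref{l.mmbd}) together with the $L^p$ convergence $u_0(0,\cdot;-T)\to\scU_0(0,\cdot)$; that uniform bound is precisely what you are trying to prove after the shear reduction, so as written your argument is circular. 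The fix is easy but must be stated: Proposition~\ref{p.burgersofos}, estimate \eqref{e.7212} with $s=0$, gives $\|u_0(0,0;-T)-u_0(0,0;-T_0)\|_{L^p(\Omega)}\le Ce^{-cT_0}$ for all $T\ge T_0$, so the uniform bound reduces to a moment bound at a single fixed time $T_0$, which your direct Feynman--Kac/Cole--Hopf estimate on a bounded time interval supplies; the stationary solution $\scU_0$ should not enter at all (or, equivalently, its integrability should be derived this way before being used).

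It is also worth noting that the paper's own proof of the uniform bound (Lemma~\ref{l.mmbd}) does not go through the one-force-one-solution principle, contrary to the diagnosis in your last paragraph. For $t>1$ it writes $Z_0(t,x;0)=\int_{\bT}\G_{t,t-1}(x,y)Z_0(t-1,y;0)\,dy$, so that $|u_0(t,x;0)|=|\partial_x\log Z_0(t,x;0)|\le \sup_y|\partial_x\G_{t,t-1}(x,y)|\,/\,\inf_y\G_{t,t-1}(x,y)$: the factor $\int Z_0(t-1,\cdot)$ cancels in the ratio, and the two remaining quantities depend only on the noise in the unit window $[t-1,t]$, so their moments are bounded uniformly in $t$ by time-stationarity, using \cite[Lemma 4.1]{GK21} and Lemma~\ref{l.bddegreen}. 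This one-step propagator decomposition achieves exactly the ``$\log$ modulo spatial constants'' cancellation you claimed forces the OFOS route, and it is both more elementary and logically prior to the mixing results; your OFOS route can be made to work, but only with the repair above, and it buys nothing extra here.
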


\begin{proof}
For each fixed $t>0$, by Lemma~\ref{l.reduction}  we have 
\[
u_{\theta}(t,0;0) \stackrel{\text{law}}{=}\theta+u_0(t,0;0),
\]
so the result is a direct consequence of Lemma~\ref{l.mmbd} below.
\end{proof}

Recall that $G_t(x)=\sum_{n} (2\pi t)^{-1/2}\exp(-(x+n)^2/(2t))$ is the heat kernel on the torus.  For the mid-point quenched density, we have 
\begin{lemma}\label{l.bdrho}
\blue{For  $p\in [1,\infty)$, there exists $C=C(p,R)$ such that for all $T\geq10$, }
\[
\EE |\bar{\rho}_{\theta}(0,x;s,y;-T)|^p \leq C\big(1_{s\geq1}+1_{s<1}G_s(y-\theta s-x)^p\big).
\]
\end{lemma}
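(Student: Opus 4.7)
The plan is to reduce to $\theta=0$ via Lemma~\ref{l.reduction}, split into short-time ($s<1$) and long-time ($s\geq 1$) regimes, and in each regime apply H\"older's inequality combined with pointwise and negative moment bounds for the SHE propagators on the torus. By Lemma~\ref{l.reduction} and the equality in law between $\G^\theta$ and $\G$, it suffices to bound the $L^p$ norm of
\[
\frac{\G_{0,-s}(x,y')\,\G_{-s,-T}(y',-)}{\G_{0,-T}(x,-)},
\]
where $y':=y-\theta s$; the shift is exactly what produces the Gaussian factor $G_s(y-\theta s-x)^p$ on the right-hand side of the claimed bound.

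For the short-time regime $s<1$, I would apply H\"older's inequality with three exponents all equal to $3p$:
\[
\left\|\bar{\rho}_\theta(0,x;s,y;-T)\right\|_p \leq \|\G_{0,-s}(x,y')\|_{3p}\,\|\G_{-s,-T}(y',-)\|_{3p}\,\|\G_{0,-T}(x,-)^{-1}\|_{3p}.
\]
The first factor is bounded by $C G_s(y'-x)$ by summing over $n\in\Z$ the standard pointwise moment bound $\|\cZ_{0,-s}(x,y'+n)\|_{3p}\leq C q_s(x-y'-n)$ for the SHE on the line, which follows from direct Feynman--Kac moment computations. The second factor is uniformly bounded because $\G_{-s,-T}(y',-)$ solves the SHE on the torus with constant initial data $1$ over a time interval of length $T-s\geq 9$, and the third factor is a negative moment bound for the same equation, which is uniformly bounded via the Hopf--Cole representation and long-time moment bounds on the associated KPZ equation.

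For the long-time regime $s\geq 1$, use the semigroup identity $\G_{0,-T}(x,-)=\int_0^1 \G_{0,-s}(x,z)\G_{-s,-T}(z,-)\,dz$ together with the pointwise lower bound $\int_0^1 \G_{0,-s}(x,z)\G_{-s,-T}(z,-)\,dz\geq \min_{z\in\bT}\G_{0,-s}(x,z)\cdot \int_0^1\G_{-s,-T}(z,-)\,dz$ to obtain
\[
\bar{\rho}_\theta(0,x;s,y;-T) \leq \frac{\G_{0,-s}(x,y')}{\min_{z\in\bT}\G_{0,-s}(x,z)}\cdot\frac{\G_{-s,-T}(y',-)}{\int_0^1\G_{-s,-T}(z,-)\,dz}.
\]
Apply H\"older with exponents $2p$ and $2p$: the first factor has $L^{2p}$ norm bounded uniformly in $s\geq 1$ by combining $\|\G_{0,-s}(x,y')\|_{4p}\leq C$ (since $G_s\leq C$ for $s\geq 1$) with a uniform negative moment bound $\|\min_{z\in\bT}\G_{0,-s}(x,z)^{-1}\|_{4p}\leq C$, and the second factor is a backward polymer density on $\bT$ whose $L^{2p}$ norm is bounded uniformly in $T$ by arguments analogous to those controlling $\rhob$ in \eqref{e.forwardbackward}.

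The main obstacle is the uniform negative moment bound on $\min_{z\in\bT}\G_{0,-s}(x,z)$ for $s\geq 1$. The periodicity is essential here: on the torus, the SHE propagator from a delta initial condition equilibrates on timescale of order one, so one expects an $L^q$-bound on $\min_{z\in\bT}\G_{0,-s}(x,z)^{-1}$ uniformly in $s\geq 1$. I would prove this by first establishing a pathwise Feynman--Kac lower bound on a time interval of length roughly $1$ and then using exponential mixing/contraction of the polymer Gibbs measure on the torus to propagate the bound to arbitrary $s\geq 1$.
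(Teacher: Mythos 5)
Your reduction to $\theta=0$ via Lemma~\ref{l.reduction} and the split into $s<1$ and $s\geq 1$ match the paper, but the way you bound the two regimes has a genuine gap: you apply H\"older to the \emph{unnormalized} factors and claim uniform-in-time moment bounds for them, and those bounds are false. In the short-time regime you need $\|\G_{-s,-T}(y',-)\|_{3p}\leq C$ and $\|\G_{0,-T}(x,-)^{-1}\|_{3p}\leq C$ uniformly in $T\geq 10$; but the SHE on the torus is intermittent, so positive moments of the solution started from constant data grow exponentially in $T-s$, and since $\log$ of the solution has fluctuations of order $\sqrt{T}$ (indeed a CLT, cf.\ \cite{GK21}) on top of a linear-in-$T$ quenched free energy, the negative moments of $\G_{0,-T}(x,-)$ blow up exponentially as well. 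The product of your three factors therefore carries a factor growing exponentially in $T$, and no bound uniform in $T$ can come out of this splitting. The same problem recurs in the long-time regime: $\|\G_{0,-s}(x,y')\|_{4p}\leq C$ does not follow from $G_s\leq C$ (that only controls the first moment), and the ``main obstacle'' you identify, a bound $\|\min_{z\in\bT}\G_{0,-s}(x,z)^{-1}\|_{4p}\leq C$ uniform in $s\geq 1$, is not an obstacle but a false statement, so no mixing argument can establish it.

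The cancellation between numerator and denominator is exactly what must be exploited, and this is what the paper does: it rewrites $\bar\rho_\theta$ \emph{entirely} in terms of normalized polymer endpoint densities, as in \eqref{e.midpointdensity}, i.e.\ as $\rhob(0,x;-s,y-\theta s)\rhof(-s,y-\theta s;-T,\mathrm{m})$ divided by $\int_0^1\rhob(0,x;-s,y'-\theta s)\rhof(-s,y'-\theta s;-T,\mathrm{m})\,dy'$, where the linear-in-time growth of $\log\G$ cancels; only then does it apply H\"older, using the uniform moment bounds \eqref{e.7221} for $\rhob$, $\rhof$ and their reciprocals (Lemma~\ref{l.estipolymer}, \cite[Proposition A.1]{GK23}). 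Your long-time factorization is in fact of this normalized type: the second factor is exactly $\rhof(-s,y';-T,\mathrm{m})$, and the first factor equals $\rhob(0,x;-s,y')/\min_{z}\rhob(0,x;-s,z)$, whose moments are uniformly bounded for $s\geq1$; it could be salvaged by bounding this ratio directly rather than through the separate, individually divergent moments you propose. The short-time argument, however, needs to be redone along the paper's lines, keeping the denominator attached to the numerator (e.g.\ bounding the ratio by $\rhob\cdot\rhof\cdot(\inf_{y'}\rhof)^{-1}$ as in the paper) so that only quantities with $T$-uniform moments appear.
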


\begin{proof}
\blue{By Lemma~\ref{l.reduction}, we have 
\[
\bar{\rho}_\theta(0,x;s,y;-T)
=\frac{\G_{0,-s}^\theta(x,y-\theta s)\G_{-s,-T}^\theta(y-\theta s,-)}{\G_{0,-T}^\theta(x,-)}
\]
By the fact that $\G\stackrel{\text{law}}{=}\G^\theta$, we claim 
\begin{equation}\label{e.midpointdensity}
\{\bar{\rho}_\theta(0,x;s,y;-T)\}_{x,s,y,T}\stackrel{\text{law}}{=}\left\{\frac{\rhob(0,x;-s,y-\theta s)\rhof(-s,y-\theta s;-T,\mathrm{m})}{\int_0^1\rhob(0,x;-s,y'-\theta s)\rhof(-s,y'-\theta s;-T,\mathrm{m})dy'}\right\}_{x,s,y,T}.
\end{equation}
Here ``$\mathrm{m}$'' denotes the uniform measure on $\bT$.
To see why \eqref{e.midpointdensity} holds, it suffices to use the definitions of the forward and backward polymer endpoint densities  \eqref{e.forwardbackward} to rewrite it as 
\begin{align*}
    A&:=\frac{\rhob(0,x;-s,y-\theta s)\rhof(-s,y-\theta s;-T,\mathrm{m})}{\int_0^1\rhob(0,x;-s,y'-\theta s)\rhof(-s,y'-\theta s;-T,\mathrm{m})dy'}\\&=\frac{\G_{0,-s}(x,y-\theta s)\G_{-s,-T} (y-\theta s,-)}{\G_{0,-T} (x,-)}.
\end{align*}}
Before estimating $\EE A^p$, we record a few facts about polymer endpoint densities: \blue{for any $p\in[1,\infty)$, there exists a constant $C=C(p, R)$ such that} 
\begin{equation}\label{e.7221}
\begin{aligned}
&\|\rhob(0,x;-s,y-\theta s)\|_p \leq C(G_s(y-\theta s -x)1_{s<1}+1_{s\geq1}),\\
&\|\rhob(0,x;-s,y-\theta s)^{-1}\|_p \leq C(G_s(y-\theta s -x)^{-1}1_{s<1}+1_{s\geq1}),\\
&\|\sup_{y\in\bT}\rhof(-s,y-\theta s;-T,\mathrm{m})\|_p +\|\sup_{y\in\bT}\rhof(-s,y-\theta s;-T,\mathrm{m})^{-1}\|_p\leq C.\\
\end{aligned}
\end{equation}

These estimates are rather standard. The proof of the  bound for $s\geq1$ can be found e.g.\ in \cite[Proposition A.1]{GK23}, while the bound for $s<1$ can be proved using the Feynman-Kac formula and Jensen's inequality in this case. The difference between the estimates for $\rhob$ and $\rhof$ comes from the different initial conditions: for $\rhob$, we have $\delta_x$ as the starting distribution, while for $\rhof$, the initial distribution is $\mathrm{m}$ so the singularity has been  integrated out. \blue{For the convenience of readers, we leave the detailed proof in the appendix, see Lemma~\ref{l.estipolymer}.}

To estimate $\EE A^p$, if $s\geq1$, it suffices to apply  \eqref{e.7221} and the H\"older inequality. If $s<1$, we first bound $A$ by 
\[
A\leq \rhob(0,x;-s,y-\theta s)\rhof(-s,y-\theta s;-T,\mathrm{m})  (\inf_{y'\in\bT}\rhof(-s,y'-\theta s;-T,\mathrm{m}))^{-1},
\]
then apply \eqref{e.7221} and the H\"older inequality to complete the proof.
\end{proof}

Recall that $q_t(x)=(2\pi t)^{-1/2}e^{-x^2/(2t)}$. We introduce the notation 
\begin{equation}\label{e.defQ}
Q_t(x)=\sum_{n\in\Z} |\partial_xq_t(x+n)|, \quad\quad t>0,x\in\R.
\end{equation}
The following lemma provides a moment estimate for the solution to the Burgers equation  with  ``Dirac'' initial data:
\begin{lemma}\label{l.bdU}
    \blue{For any $p\in [1,\infty)$, there exists a constant $C=C(p,R)$ such that} 
\[
\EE |\U_{\theta}(0,x;-s,y)|^p\leq C\bigg(|\theta|^p+1_{s\geq 1}+1_{s<1}\big(1+\frac{Q_s(y-\theta s-x)^p}{G_s(y-\theta s-x)^p}\big)\bigg).
\]
\end{lemma}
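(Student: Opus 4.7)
The plan is to first apply Lemma~\ref{l.reduction} to write
\[
\U_{\theta}(0,x;-s,y) = \theta + \partial_x \log \G^\theta_{0,-s}(x, y-\theta s),
\]
and then exploit the equality in law $\G^\theta \stackrel{\text{law}}{=} \G$ (from $\xi^\theta \stackrel{\text{law}}{=} \xi$) to reduce to the $\theta$-free estimate
\[
\EE |\partial_x \log \G_{0,-s}(x, z)|^p \leq C\left(\1_{s \geq 1} + \1_{s < 1}\Bigl(1 + \frac{Q_s(z-x)^p}{G_s(z-x)^p}\Bigr)\right)
\]
with $z = y-\theta s \in \bT$; the $|\theta|^p$ term in the stated bound absorbs the additive $\theta$.

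The main tool is the Brownian-bridge Feynman--Kac representation
\[
\G_{0,-s}(x, z) = \sum_{n \in \Z} q_s(z + n - x)\, \Phi_n^s(x, z),
\]
where $\Phi_n^s(x, z) = \E\!\left[\exp\!\bigl(\int_0^s \xi(-t, B^{x,z+n}_t)\,dt - \tfrac12 R(0) s\bigr)\right]$ with $B^{x,z+n}$ a Brownian bridge from $x$ to $z+n$ on $[0,s]$. A Girsanov/Jensen argument in the spirit of Lemma~\ref{l.estipolymer}, exploiting the smoothness of $R$, gives uniform $L^p$ bounds
\[
\|\Phi_n^s(x,z)\|_p + \|\Phi_n^s(x,z)^{-1}\|_p + \|\partial_x \Phi_n^s(x,z)\|_p \leq C
\]
for $s \in (0,1]$, $n \in \Z$, $x \in \R$, $z \in \bT$. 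Differentiating term by term,
\[
\partial_x \log \G_{0,-s}(x, z) = \frac{\sum_n \partial_x q_s(z+n-x)\, \Phi_n^s + \sum_n q_s(z+n-x)\, \partial_x \Phi_n^s}{\sum_n q_s(z+n-x)\, \Phi_n^s}.
\]

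For $s < 1$ I bound the modulus of this ratio by
\[
\frac{Q_s(z-x)}{G_s(z-x)} \cdot \frac{\sup_n \Phi_n^s}{\inf_n \Phi_n^s} + \frac{\sup_n |\partial_x \Phi_n^s|}{\inf_n \Phi_n^s}.
\]
The rapid Gaussian decay of $q_s$ and $\partial_x q_s$ means that only $O(1)$ values of $n$ contribute non-negligibly; a standard truncation plus tail estimate reduces the sup/inf to an $O(1)$ window around $\lfloor x-z \rfloor$, and Hölder's inequality applied to the uniform $\Phi_n^s$-bounds closes the $s<1$ case. For $s \geq 1$ I use the semigroup identity
\[
\G_{0,-s}(x,z) = \int_\bT \G_{0,-1}(x,w)\, \G_{-1,-s}(w,z)\,dw = C_s \int_\bT \G_{0,-1}(x,w)\, \beta(w)\,dw,
\]
where $\beta$ is a random probability density on $\bT$, independent of $\G_{0,-1}$ because the two factors depend on disjoint time intervals of the noise. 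Then $\partial_x \log \G_{0,-s}(x,z) = \partial_x \log \int_\bT \G_{0,-1}(x,w)\beta(w)\,dw$, and applying the $s=1$ Brownian-bridge decomposition above now produces the same type of bound with $Q_1/G_1$ replaced by a constant (both $G_1$ and $Q_1$ are uniformly bounded on $\bT$); a conditional Jensen argument in $\beta$ transfers the uniform moment control on $\Phi_n^1, 1/\Phi_n^1, \partial_x\Phi_n^1$ through the $\beta$-average.

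The main obstacle is controlling the infinite sums over $n \in \Z$ and the resulting sup/inf-type ratios while keeping the $L^p$ norms uniform in $(s,x,z)$. This is resolved by the super-exponential decay of $q_s$, which effectively truncates the sum to $O(1)$ relevant terms, combined with the uniform $L^p$ bounds on $\Phi_n^s$ and its reciprocal and $x$-derivative, the latter coming from Girsanov and the spatial smoothness of $\xi(-t,\cdot)$.
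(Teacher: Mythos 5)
Your reduction via Lemma~\ref{l.reduction} and the shear equality in law $\G^\theta\stackrel{\text{law}}{=}\G$ is exactly the paper's first step, but after that your route diverges and is considerably longer than the paper's. For $s<1$ the paper simply applies Cauchy--Schwarz, $\EE|\partial_x\log\G_{0,-s}(x,z)|^p\le(\EE|\partial_x\G_{0,-s}(x,z)|^{2p})^{1/2}(\EE\,\G_{0,-s}(x,z)^{-2p})^{1/2}$, and quotes Lemma~\ref{l.bddegreen}, whose proof already contains the Brownian-bridge decomposition you re-derive; for $s\ge1$ it quotes \eqref{e.aid} of Lemma~\ref{l.mmbd}, which rests on the same one-unit factorization you use but controls the ratio through $\sup_y|\partial_x\G_{t,t-1}(x,y)|/\inf_y\G_{t,t-1}(x,y)$ and the negative-moment bound on the infimum from \cite[Lemma 4.1]{GK21}. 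Your conditional-Jensen argument in $\beta$ for $s\ge1$ (using independence of the two time blocks, Jensen for $r\mapsto r^{-2p}$, and the pointwise moment bounds at $t=1$) is a legitimate and somewhat more self-contained alternative to that citation; this part is fine and genuinely different.

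The $s<1$ step, however, has a gap as written. The pointwise bound $\frac{Q_s(z-x)}{G_s(z-x)}\frac{\sup_n\Phi_n^s}{\inf_n\Phi_n^s}+\frac{\sup_n|\partial_x\Phi_n^s|}{\inf_n\Phi_n^s}$ involves suprema and infima over all of $\Z$, and uniform-in-$n$ bounds on $\|\Phi_n^s\|_p$, $\|(\Phi_n^s)^{-1}\|_p$, $\|\partial_x\Phi_n^s\|_p$ do not give $L^p$ control of $\sup_n\Phi_n^s$ or of $(\inf_n\Phi_n^s)^{-1}$; moreover, once the Gaussian weights have been discarded you can no longer ``truncate to an $O(1)$ window,'' because the sup and inf no longer see those weights. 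The repair is to keep the weights: lower-bound the denominator by its single dominant term $q_s(z+n_0-x)\Phi_{n_0}^s$, where $n_0$ minimizes $|z+n-x|$, apply the triangle inequality and H\"older term by term in the numerator using your uniform moment bounds (including the negative moments of $\Phi_{n_0}^s$), and note that for $s\le1$ one has $G_s(z-x)\le C\,q_s(z+n_0-x)$, so the resulting deterministic factor is $C\,(Q_s(z-x)+G_s(z-x))/G_s(z-x)=C(1+Q_s(z-x)/G_s(z-x))$, as required. With this modification your argument closes, but at that point you have essentially reproved Lemma~\ref{l.bddegreen}, which together with Cauchy--Schwarz is all the paper needs.
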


\begin{proof}
By Lemma~\ref{l.reduction}, we have 
\[
\U_{\theta}(0,x;-s,y)\stackrel{\text{law}}{=}\theta+\partial_x  \log \G_{0,-s}(x,y-\theta s),
\]
so it remains to estimate the moments of the second term on the r.h.s. 

For $s\geq1$, the estimate comes from Lemma~\ref{l.mmbd} below. 

For $s<1$, we apply Cauchy-Schwarz to derive
\[
\begin{aligned}
&\EE |\partial_x  \log \G_{0,-s}(x,y-\theta s)|^p \\
&\leq \sqrt{ \EE |\partial_x \G_{0,-s}(x,y-\theta s)|^{2p}\EE \G_{0,-s}(x,y-\theta s)^{-2p}}.
\end{aligned}
\]
Applying Lemma~\ref{l.bddegreen} below,  we have 
\[
\begin{aligned}
&\EE \G_{0,-s}(x,y-\theta s)^{-2p} \leq C G_s(y-\theta s-x)^{-2p},\\
&\EE |\partial_x \G_{0,-s}(x,y-\theta s)|^{2p} \leq C\big(  Q_s(y-\theta s-x)^{2p}+G_s(y-\theta s-x)^{2p}\big).
\end{aligned}
\]
Combining the above two estimates,  we conclude that 
\[
\EE |\partial_x  \log \G_{0,-s}(x,y-\theta s)|^p  \leq C\big(1+\frac{Q_s(y-\theta s-x)^{p}}{G_s(y-\theta s-x)^p}\big),
\]
which completes the proof.
\end{proof}

\subsection{Exponential mixing of polymer and Burgers}
In this section, we prove a few stability results, which will be used to show the convergence of the r.h.s.\ of \eqref{e.7191}.

The first result is the stabilization of the polymer mid-point density: 
\begin{lemma}\label{l.mixpolymer}
    \blue{For any $p\in [1,\infty)$, there exists constants $C,c>0$ depending on $p,R(\cdot)$ such that} for any $T_2\geq T_1\geq 10$, we have 
\[
\begin{aligned}
  &\EE |\bar{\rho}_{\theta}(0,x;s,y;-T_1)-\bar{\rho}_{\theta}(0,x;s,y;-T_2)|^p \\
  &\leq Ce^{-c(T_1-s)}\big(1_{s\geq1}+1_{s<1} G_s(y-\theta s-x)^p\big).
\end{aligned}
\]
As a result, $\bar{\rho}_\theta(0,x;s,y;-T)$ converges in $L^p(\Omega)$ as $T\to\infty$.
\end{lemma}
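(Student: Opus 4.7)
The plan is to start from the representation derived in the proof of Lemma~\ref{l.bdrho}, which gives in law
\[
\bar{\rho}_\theta(0,x;s,y;-T)=\frac{\rhob(0,x;-s,y-\theta s)\,\rhof(-s,y-\theta s;-T,\mathrm{m})}{\int_0^1\rhob(0,x;-s,y'-\theta s)\,\rhof(-s,y'-\theta s;-T,\mathrm{m})\,dy'}.
\]
The crucial observation is that, when we couple the two realizations at $-T_1$ and $-T_2$ by using the same noise on $[-T_1,0]$, the backward factor $\rhob(0,x;-s,\cdot)$ depends only on the noise in $[-s,0]$ and is identical in both expressions. All $T$-dependence is therefore concentrated in the forward endpoint density $\rhof(-s,\cdot;-T,\mathrm{m})$, which depends only on the noise in $[-T,-s]$.

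Writing $f_i:=\rhof(-s,\cdot;-T_i,\mathrm{m})$ and $D_i:=\int_0^1\rhob(0,x;-s,y'-\theta s)\,f_i(y'-\theta s)\,dy'$, a short algebraic manipulation gives
\[
\bar{\rho}_\theta(0,x;s,y;-T_1)-\bar{\rho}_\theta(0,x;s,y;-T_2)=\frac{\rhob(0,x;-s,y-\theta s)}{D_2}(f_1-f_2)(y-\theta s)-\bar{\rho}_\theta(0,x;s,y;-T_1)\,\frac{D_1-D_2}{D_2}.
\]
Since $\rhob(0,x;-s,\cdot)$ is $1$-periodic and integrates to $1$ over $\bT$, we have $|D_1-D_2|\leq\|f_1-f_2\|_{L^\infty(\bT)}$. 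The task thus reduces to controlling $\|f_1-f_2\|_{L^\infty(\bT)}$ in $L^p(\Omega)$; the remaining factors $\rhob$, $1/D_2$ and $\bar{\rho}_\theta(0,x;s,y;-T_1)$ are handled by \eqref{e.7221} and Lemma~\ref{l.bdrho} via H\"older's inequality, with the moments of $\rhob$ producing precisely the factor $1_{s<1}G_s(y-\theta s-x)^p$ appearing in the statement.

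The main obstacle, and the technical heart of the argument, is the exponential mixing estimate
\[
\EE\,\bigl\|\rhof(-s,\cdot;-T_1,\mathrm{m})-\rhof(-s,\cdot;-T_2,\mathrm{m})\bigr\|_{L^\infty(\bT)}^p\leq Ce^{-c(T_1-s)}.
\]
By the Markov property of the polymer measure, $\rhof(-s,\cdot;-T_2,\mathrm{m})=\rhof(-s,\cdot;-T_1,\nu)$ where $\nu$ is the (random) polymer endpoint distribution at time $-T_1$ started from $\mathrm{m}$ at $-T_2$. The bound thus becomes a \emph{quenched contraction} statement for the normalized propagator $\G_{-s,-T_1}$: two distinct initial distributions at time $-T_1$ are forgotten exponentially fast by time $-s$. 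I would establish this via a Birkhoff--Hopf/coupling contraction for the forward polymer propagator on the compact torus, exploiting uniform positivity and upper bounds of $\G_{-s',-s'-1}(x,y)$ on unit time windows together with moment estimates as in \eqref{e.7221}, in a spirit close to arguments of \cite{GK23}. Combining this contraction with the moment bounds via H\"older's inequality yields the claimed estimate, and the Cauchy criterion then delivers the $L^p$ convergence of $\bar{\rho}_\theta(0,x;s,y;-T)$ as $T\to\infty$.
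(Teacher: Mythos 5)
Your proposal is correct and follows essentially the same route as the paper: the same representation of $\bar{\rho}_\theta$ through $\rhob$ and $\rhof$ (from Lemma~\ref{l.reduction}), the same two-term decomposition isolating all $T$-dependence in the forward endpoint density, and the same reduction to the exponential mixing estimate for $\rhof(-s,\cdot;-T,\mathrm{m})$, which the paper simply quotes from \cite[Proposition A.1]{GK23} rather than reproving by the contraction argument you sketch. Note also that the weaker, pointwise-in-$y$ bound $\sup_{y}\|\rhof(-s,y-\theta s;-T_1,\mathrm{m})-\rhof(-s,y-\theta s;-T_2,\mathrm{m})\|_p\leq Ce^{-c(T_1-s)}$ suffices even for the denominator term (via Minkowski's integral inequality and H\"older), so the moment bound on the $L^\infty(\bT)$-norm of the difference that you propose to establish is not actually needed.
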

This shows that for any fixed $s>0$, as $T\to\infty$, the mid-point density $\bar{\rho}_\theta(0,x;s,y;-T)$ converges. The proof utilizes the fast mixing of the endpoint density, proved in \cite[Theorem 2.3]{GK21}, see also \cite{rosati} for a different proof.

\begin{proof}[Proof of Lemma~\ref{l.mixpolymer}]
From \eqref{e.midpointdensity} (as a result of Lemma~\ref{l.reduction}), we have
\[
\{\bar{\rho}_\theta(0,x;s,y;-T)\}_{x,s,y,T}\stackrel{\text{law}}{=}\left\{\frac{\rhob(0,x;-s,y-\theta s)\rhof(-s,y-\theta s;-T,\mathrm{m})}{\int_0^1\rhob(0,x;-s,y'-\theta s)\rhof(-s,y'-\theta s;-T,\mathrm{m})dy'}\right\}_{x,s,y,T}.
\]
Thus, it suffices to prove the same result for the r.h.s.\ of the above display, which we denote by $A(T)$.

 First, we write $A(T_1)-A(T_2)=\mathrm{err}_1+\mathrm{err}_2$ with 
\[
\mathrm{err}_1=\frac{\rhob(0,x;-s,y-\theta s)[\rhof(-s,y-\theta s;-T_1,\mathrm{m})-\rhof(-s,y-\theta s;-T_2,\mathrm{m})]}{\int_0^1\rhob(0,x;-s,y'-\theta s)\rhof(-s,y'-\theta s;-T_1,\mathrm{m})dy'},
\]
and
\[
\begin{aligned}
\mathrm{err}_2=&\frac{\rhob(0,x;-s,y-\theta s)\rhof(-s,y-\theta s;-T_2,\mathrm{m})}{\prod_{j=1}^2\int_0^1\rhob(0,x;-s,y'-\theta s)\rhof(-s,y'-\theta s;-T_j,\mathrm{m})dy'}\\
&\times\int_0^1 \rhob(0,x;-s,y'-\theta s)[\rhof(-s,y'-\theta s;-T_2,\mathrm{m})-\rhof(-s,y'-\theta s;-T_1,\mathrm{m})]dy'.
\end{aligned}
\]
To estimate $\EE|\mathrm{err}_i|^p$ with $i=1,2$, besides \eqref{e.7221}, we also need the following estimate (\cite[Proposition A.1]{GK23}):
\[
\begin{aligned}
\sup_{y}\|\rhof(-s,y-\theta s;-T_1,\mathrm{m})-\rhof(-s,y-\theta s;-T_2,\mathrm{m})\|_p \leq Ce^{-c(T_1-s)}.
\end{aligned}
\]
Applying the H\"older inequality as in the proof of Lemma~\ref{l.bdrho}, we obtain the desired bound for $\EE|\mathrm{err}_i|^p$ for $i=1,2$. 
\end{proof}

The second goal of this section is to show that the solutions to the Burgers equation, started from different initial data, are exponentially close in time:
\begin{proposition}\label{p.burgersofos}
For any $p$, there exist constants $C<\infty$ and $c>0$ so that 
\begin{equation}\label{e.7211}
\sup_{x,y\in[0,1]}\EE|\U_\theta(0,x;-s,y)-u_\theta(0,x;-T)|^p \leq Ce^{-cs}, \quad\quad  1\leq s\leq T.
\end{equation}
We also have 
\begin{equation}\label{e.7212}
\EE |u_{\theta}(-s,y;-T_2)-u_{\theta}(-s,y;-T_1)|^p \leq C e^{-c(T_1-s)}, \quad\quad 0\leq s\leq T_1\leq T_2.
\end{equation}
\end{proposition}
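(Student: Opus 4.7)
My plan is to reduce both parts to a single Burgers contraction estimate on the torus, which I would prove via the Hopf--Cole transform.

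\textbf{Step 1 (Reduction and common structure).} Using Lemma~\ref{l.reduction} together with the law equivalence $\G^\theta \stackrel{\text{law}}{=} \G$, I reduce to $\theta=0$. For part~(1), the semigroup identity
\[
\int_0^1 \G_{0,-T}(x,w)\,dw \;=\; \int_0^1 \G_{0,-s}(x,z)\, F(z)\,dz, \qquad F(z):=\G_{-s,-T}(z,-),
\]
splits the noise into an $[-s,0]$ part (captured by $\G_{0,-s}$) and a $[-T,-s]$ part (captured by $F$), which are independent. Factoring out the common term $u_0(0,x;-s) = \partial_x \log \int_0^1 \G_{0,-s}(x,z')\,dz'$ by writing $\G_{0,-s}(x,z) = c(x,s)\,\rhob(0,x;-s,z)$ with $c(x,s) = \int_0^1 \G_{0,-s}(x,z')\,dz'$ yields
\[
\U_0(0,x;-s,y) - u_0(0,x;-T) = \partial_x\log \rhob(0,x;-s,y) - \partial_x\log\!\int_0^1 \!\rhob(0,x;-s,z)\,\bar F(z)\,dz,
\]
where $\bar F = F/\int_0^1 F(z')\,dz'$ is a probability density on $\bT$ independent of $\rhob(0,x;-s,\cdot)$. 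Part~(2) reduces by the same semigroup trick on $[-T_1,-s]$ to a parallel expression with the uniform measure $\mathrm{m}$ and the normalized $\G_{-T_1,-T_2}(\cdot,-)$ playing the roles of $\nu_1,\nu_2$.

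\textbf{Step 2 (The contraction).} The central task is then the universal estimate
\[
\sup_{x \in \bT}\EE\!\left|\partial_x\log\!\int_0^1 \!\rhob(0,x;-s,z)\,\nu_1(dz) - \partial_x\log\!\int_0^1\! \rhob(0,x;-s,z)\,\nu_2(dz)\right|^p \leq C\,e^{-cs},
\]
for every pair of probability measures $\nu_1,\nu_2 \in \mathcal M_1(\bT)$ and every $s \geq 1$. This is the Burgers contraction on the torus: two Burgers solutions at time $0$ under the same noise on $[-s,0]$ but started from different initial data at time $-s$ merge exponentially fast. I would establish it via Hopf--Cole: the ratio $\phi := Z_1/Z_2$ of the two SHE solutions $Z_i(x) = \int_0^1 \G_{0,-s}(x,z)\,\nu_i(dz)$ satisfies on the torus the transport--diffusion equation $\partial_t \phi = \frac12\Delta\phi + u_2\,\partial_x\phi$ with random bounded drift $u_2 = \partial_x\log Z_2$ (whose $L^p$ moments are controlled by Lemma~\ref{l.bdu}); exponential ergodicity of this equation forces $\phi$ to become asymptotically constant in $x$ at rate $e^{-cs}$, from which the claim on $\partial_x\log\phi$ follows. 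Alternatively, one can invoke directly the exponential mixing of $\rhob(0,x;-s,\cdot)$ in the starting point $x$ on the torus from \cite{GK21, GK23}, the same type of estimate used in the proof of Lemma~\ref{l.mixpolymer}. Applying the contraction to the measures identified in Step~1, after conditioning on the noise that determines the (independent) $\bar F$, gives (1) with rate $e^{-cs}$; running the parallel application on $[-T_1,-s]$ gives (2) with rate $e^{-c(T_1-s)}$.

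\textbf{Main obstacle.} The principal technical difficulty is handling the Dirac initial measure $\nu_1 = \delta_y$ in a neighborhood of $s = 1$, where $\partial_x \log \G_{0,-s}(x,y)$ is nearly singular, as reflected by the $Q_s/G_s$ factor in Lemma~\ref{l.bdU} for $s<1$. I would resolve this by splitting the time interval $[-s,0]$ into $[-s,-s+\tfrac12]$ and $[-s+\tfrac12, 0]$: over the first half-unit of time the Feynman--Kac semigroup smooths the Dirac into a bona fide density with moments controlled by Lemmas~\ref{l.bdrho} and~\ref{l.bdU}; the uniform mixing is then invoked on $[-s+\tfrac12, 0]$, producing the factor $e^{-c(s-1/2)}$, which is comparable to $e^{-cs}$ for $s \geq 1$ after adjusting constants.
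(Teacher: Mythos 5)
Your Step 1 is correct and is essentially the same reduction the paper makes: after the shear of Lemma~\ref{l.reduction} and the semigroup identity, both \eqref{e.7211} and \eqref{e.7212} become statements that two Burgers solutions driven by the same noise on a common time interval, but started from different initial measures (one of which is random but independent of that noise, so one may condition), merge exponentially fast, uniformly over the initial measures. The paper states exactly this as Proposition~\ref{p.burgersapp}.

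The gap is in Step 2, which is where all the work lies: the uniform-in-$\nu$ contraction is asserted rather than proved, and the inputs you invoke do not deliver it at the level actually needed. The quantity to control is $\partial_x\log(Z_1/Z_2)$, i.e.\ the Burgers solution itself, not the endpoint densities. The estimates you cite as ``the same type used in Lemma~\ref{l.mixpolymer}'' (e.g.\ the stability $\sup_y\|\rhof(-s,y;-T_1,\mathrm{m})-\rhof(-s,y;-T_2,\mathrm{m})\|_p\le Ce^{-c(T_1-s)}$) are sup-norm statements about the densities with no derivative in the starting point; smallness of $\phi=Z_1/Z_2$ minus a constant does not bound $\partial_x\phi/\phi$ without an extra smoothing/derivative step (e.g.\ running one more unit of time through the quenched kernel and using bounds of the type in Lemma~\ref{l.bddegreen} together with negative-moment/lower-bound control of the denominator). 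Likewise, ``exponential ergodicity'' of $\partial_t\phi=\frac12\Delta\phi+u_2\partial_x\phi$ is not free: the drift is random and only has bounded moments, so the per-unit-time Doeblin/oscillation-contraction constant is itself random and can degenerate on bad noise events; turning this into a deterministic rate in $L^p(\Omega)$ requires exploiting independence of the noise over disjoint unit time intervals and quantitative tail bounds (this is precisely the Markov-chain construction of \cite[Prop.~4.7]{GK21}). The paper's Appendix~\ref{s.pbofos} resolves exactly these two points: it builds a $\nu$-independent, $\F_{[t-k,t]}$-measurable approximation of both $X(t,\cdot)=\E^\omega[\G_{t,t-1}(\cdot,Y_1)]$ \emph{and} of $\partial_x X(t,\cdot)$ (cf.\ \eqref{e.7213}--\eqref{e.7214}), and controls the ratio $\partial_x X/X$ via the lower-bound event $C_k$. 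Your outline would be complete if you either carried out this derivative-level approximation yourself or cited the Busemann/Burgers-level synchronization statement (\cite[Prop.~4.7]{GK21}) rather than the density-level mixing; as written, the central estimate of the proposition is assumed. Your handling of the Dirac data near $s=1$ by splitting off a half-unit of smoothing time is fine, but it is not the main obstacle.
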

The above proposition is essentially  the one-force-one-solution principle of the stochastic Burgers equation with a periodic forcing, which is well-known  given the classical result of Sinai \cite{sinai}. Since we could not find an exact reference for our purpose, we provide a proof in the appendix for the convenience of the readers.


\subsection{Proof of Theorem~\ref{t.ofos}}
Now we have all the ingredients that are needed to complete the proof of Theorem~\ref{t.ofos}. 

Fix $x\in\bT$ in this section. Recall from \eqref{e.7191} that 
\begin{equation}\label{e.exgtheta}
\begin{aligned}
g_{\theta}&(0,x;-T)=1+\int_0^1 \phi_{\ic}(y)\bar{\rho}_{\theta}(0,x;T,y;-T)[\U_\theta(0,x;-T,y)-u_\theta(0,x;-T)]dy\\
&+\int_0^T\int_0^1u_{\theta}(-s,y;-T)\bar{\rho}_{\theta}(0,x;s,y;-T) [\U_\theta(0,x;-s,y)-u_\theta(0,x;-T)]dyds.
\end{aligned}
\end{equation}
Let us first check that the integrals on the r.h.s.\ \blue{are} well-defined: take the second integral as an example, combining Lemmas~\ref{l.bdu}, \ref{l.bdrho} and \ref{l.bdU}, we have that for any $p\in [1,\infty)$, 
\begin{equation}\label{e.bdintegrand}
\begin{aligned}
&\|u_{\theta}(-s,y;-T)\bar{\rho}_{\theta}(0,x;s,y;-T) [\U_\theta(0,x;-s,y)-u_\theta(0,x;-T)]\|_p\\
&\leq C \big(1_{s\geq1}+1_{s<1}G_s(y-\theta s-x)\big)\big(1+1_{s<1}\tfrac{Q_s(y-\theta s-x)}{G_s(y-\theta s-x)})\big)
\end{aligned}
\end{equation}
\blue{We claim the r.h.s.\ of \eqref{e.bdintegrand} belongs to $L^1([0,T]\times [0,1])$. To see why, we  consider the range of $s<1$ and prove that $G_s(y)$ and $Q_s(y)$ are locally integrable. By the definition of $G_s$ and $Q_s$, the integrability is a consequence of  the facts that $\int_{\R} q_s(y)dy=1$ and $\int_{\R} |\partial_y q_s(y)|dy=cs^{-1/2}$ for a constant $c>0$.}

To show Theorem~\ref{t.ofos}, it is enough to prove that $\{g_{\theta}(0,x;-T)\}_{T\geq0}$ is a Cauchy sequence in $L^p(\Omega)$ \blue{and the  convergence to the limit is exponentially fast}. \blue{For} any $T_2\geq T_1\geq 10$, we decompose the error as follows:
\[
g_{\theta}(0,x;-T_2)-g_{\theta}(0,x;-T_1)=J_0+J_1+J_2-J_3,
\]
with the following definitions of $J_1,J_2,J_3,J_4$:
\begin{align*}
    J_0&=\int_0^1 \phi_{\ic}(y)\bar{\rho}_{\theta}(0,x;T_2,y;-T_2)[\U_\theta(0,x;-T_2,y)-u_\theta(0,x;-T_2)]dy\\
&-\int_0^1 \phi_{\ic}(y)\bar{\rho}_{\theta}(0,x;T_1,y;-T_1)[\U_\theta(0,x;-T_1,y)-u_\theta(0,x;-T_1)]dy,\\
J_1&=\int_0^{\frac12 T_1}\int_0^1 u_{\theta}(-s,y;-T_2)\bar{\rho}_{\theta}(0,x;s,y;-T_2) [\U_\theta(0,x;-s,y)-u_\theta(0,x;-T_2)]dyds\\
&-\int_0^{\frac12 T_1}\int_0^1 u_{\theta}(-s,y;-T_1)\bar{\rho}_{\theta}(0,x;s,y;-T_1) [\U_\theta(0,x;-s,y)-u_\theta(0,x;-T_1)]dyds,\\
J_2&=\int_{\frac12 T_1}^{T_2}\int_0^1 u_{\theta}(-s,y;-T_2)\bar{\rho}_{\theta}(0,x;s,y;-T_2) [\U_\theta(0,x;-s,y)-u_\theta(0,x;-T_2)]dyds,\\
   J_3&=\int_{\frac12 T_1}^{T_1}\int_0^1 u_{\theta}(-s,y;-T_1)\bar{\rho}_{\theta}(0,x;s,y;-T_1) [\U_\theta(0,x;-s,y)-u_\theta(0,x;-T_1)]dyds.
\end{align*}

We have the following two lemmas:
\begin{lemma}\label{l.J23}
    \blue{For any $p\in [1,\infty)$, there exist constants $C,c>0$ depending on $p, R(\cdot)$ such that} 
$\|J_0\|_p+\|J_2\|_p+\|J_3\|_p\leq Ce^{-cT_1}$.
\end{lemma}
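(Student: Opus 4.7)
The strategy is to estimate each of $J_0$, $J_2$, $J_3$ separately by combining H\"older's inequality with the pointwise moment bounds from Lemmas~\ref{l.bdu}, \ref{l.bdrho}, and \ref{l.bdU} together with the exponential mixing estimate \eqref{e.7211} in Proposition~\ref{p.burgersofos}. The crucial feature that makes all three contributions of size $Ce^{-cT_1}$ is that in every case the relevant time index $s$ is bounded below by $T_1/2$ (or equals $T_1$ or $T_2$ for $J_0$), so the mid-point density sits in the non-singular regime $s\geq 1$ of Lemma~\ref{l.bdrho} and the exponential factor $e^{-cs}$ supplied by Proposition~\ref{p.burgersofos} dominates.

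For $J_0$, the function $\phi_{\ic}$ is continuous and $1$-periodic, hence uniformly bounded. For $T\in\{T_1,T_2\}$ with $T\geq 10$, Lemma~\ref{l.bdrho} gives $\|\bar{\rho}_{\theta}(0,x;T,y;-T)\|_{2p}\leq C$ uniformly in $y$, while \eqref{e.7211} applied at $s=T$ yields $\|\U_{\theta}(0,x;-T,y)-u_{\theta}(0,x;-T)\|_{2p}\leq Ce^{-cT}$. A twofold H\"older inequality followed by integration in $y$ then gives $\|J_0\|_p\leq C(e^{-cT_1}+e^{-cT_2})\leq Ce^{-cT_1}$.

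For $J_2$, I would estimate the $L^p$ norm of the integrand pointwise in $(s,y)\in[T_1/2,T_2]\times[0,1]$ via a threefold H\"older inequality with exponents $3p$: Lemma~\ref{l.bdu} controls $\|u_{\theta}(-s,y;-T_2)\|_{3p}\leq C(|\theta|+1)$; Lemma~\ref{l.bdrho} controls $\|\bar{\rho}_{\theta}(0,x;s,y;-T_2)\|_{3p}\leq C$ uniformly in $y$ (since $s\geq T_1/2\geq 5\geq 1$); and \eqref{e.7211} yields $\|\U_{\theta}(0,x;-s,y)-u_{\theta}(0,x;-T_2)\|_{3p}\leq Ce^{-cs}$ (applicable because $s\leq T_2$). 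Multiplying these bounds and integrating,
\[
\|J_2\|_p \;\leq\; C\int_{T_1/2}^{T_2}\int_0^1 e^{-cs}\,dy\,ds \;\leq\; Ce^{-cT_1/2}.
\]
The term $J_3$ is handled identically, with $T_2$ replaced by $T_1$ throughout and integration over $s\in[T_1/2,T_1]$, again giving $\|J_3\|_p\leq Ce^{-cT_1/2}$.

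I do not foresee a substantive obstacle: the main technical work (singular moment bounds on the density and polymer endpoint estimates) has already been carried out in the preceding subsections, and this lemma is essentially a bookkeeping step that converts those estimates, combined with the Burgers mixing bound \eqref{e.7211}, into the claimed exponential Cauchy rate. The reason for the cutoff at $s=T_1/2$ in the decomposition is precisely to keep these three terms away from the short-time regime $s<1$, where the density $\bar{\rho}_\theta$ carries the Gaussian singularity $G_s(y-\theta s-x)$; that regime appears only in $J_1$, which is treated separately.
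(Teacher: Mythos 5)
Your proposal is correct and follows essentially the same route as the paper: the paper likewise bounds $J_0$ using Lemma~\ref{l.bdrho}, Proposition~\ref{p.burgersofos} and the boundedness/integrability of $\phi_{\ic}$, and bounds $J_2,J_3$ by H\"older with Lemmas~\ref{l.bdu}, \ref{l.bdrho} and Proposition~\ref{p.burgersofos}, yielding $\|J_2\|_p+\|J_3\|_p\leq C\int_{T_1/2}^{\infty}e^{-cs}\,ds$. Your write-up simply spells out the H\"older exponents and the observation that $s\geq T_1/2\geq 1$ keeps the density in the non-singular regime, which is exactly the implicit content of the paper's terse argument.
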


\begin{proof}
For $J_0$, we apply Lemma~\ref{l.bdrho}, Proposition~\ref{p.burgersofos} and the fact that $\phi_{\ic}\in L^1(\bT)$ to derive that $\|J_0\|_p\leq Ce^{-cT_1}$. Similarly, applying Lemmas~\ref{l.bdu}, \ref{l.bdrho} and Proposition~\ref{p.burgersofos}, we have 
\[
\|J_2\|_p+\|J_3\|_p \leq C\int_{T_1/2}^\infty e^{-cs}ds,
\]
which completes the proof.
\end{proof}

%
 
\begin{lemma}\label{l.J1}
    \blue{For any $p\in [1,\infty)$, there exist constants $C,c>0$ depending on $p, R(\cdot)$ such that} 
$\|J_1\|_p \leq Ce^{-cT_1}$.
\end{lemma}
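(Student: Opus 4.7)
The plan is to exploit the fact that in $J_1$ the $s$-integral is confined to $[0, T_1/2]$, leaving a gap of at least $T_1/2$ between the integration variable and the initial times $-T_1, -T_2$; consequently every application of the exponential mixing estimates from \eqref{e.7212} and Lemma~\ref{l.mixpolymer} pays an extra factor of order $e^{-cT_1/2}$. To realize this, I introduce the shorthands $A_i = u_\theta(-s,y;-T_i)$, $B_i = \bar{\rho}_\theta(0,x;s,y;-T_i)$, and $C_i = \U_\theta(0,x;-s,y) - u_\theta(0,x;-T_i)$ for $i=1,2$, so that the integrand of $J_1$ equals $A_2 B_2 C_2 - A_1 B_1 C_1$. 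I then telescope via
$$A_2 B_2 C_2 - A_1 B_1 C_1 = (A_2 - A_1) B_2 C_2 + A_1 (B_2 - B_1) C_2 + A_1 B_1 (C_2 - C_1),$$
and by Minkowski reduce the task to bounding the $L^p(\Omega)$ norm of each summand pointwise in $(s,y)$ and then integrating over $[0, T_1/2] \times [0, 1]$.

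For each of the three summands I plan to apply H\"older with three exponents equal to $3p$. The exponential smallness is carried entirely by the difference factor: \eqref{e.7212} gives $\|A_2 - A_1\|_{3p} \leq Ce^{-c(T_1 - s)}$ and, since $C_2 - C_1 = u_\theta(0,x;-T_1) - u_\theta(0,x;-T_2)$ is independent of $(s,y)$, $\|C_2 - C_1\|_{3p} \leq Ce^{-cT_1}$; meanwhile Lemma~\ref{l.mixpolymer} gives $\|B_2 - B_1\|_{3p} \leq Ce^{-c(T_1 - s)}\big[1_{s\geq 1} + 1_{s < 1} G_s(y - \theta s - x)\big]$. The remaining two factors in each product admit $T_i$-uniform $L^{3p}$ bounds from Lemmas~\ref{l.bdu}, \ref{l.bdrho}, and \ref{l.bdU}; multiplying these together reproduces, up to the exponential factor, exactly the integrable majorant $1_{s \geq 1} + 1_{s<1}(G_s(y-\theta s - x) + Q_s(y-\theta s - x))$ already used in \eqref{e.bdintegrand}.

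Putting the pieces together, each of the three summands has $L^p$ norm pointwise bounded by $Ce^{-c(T_1 - s)}\big[1_{s\geq 1} + 1_{s<1}(G_s(y - \theta s - x) + Q_s(y - \theta s - x))\big]$. Because $s \leq T_1/2$ on the domain of integration, $e^{-c(T_1 - s)} \leq e^{-cT_1/2}$ uniformly, so I may pull this factor outside the $(s,y)$-integral. What remains is finite by the same argument used after \eqref{e.bdintegrand}: $\int_0^1 G_s(y)\,dy = 1$, while $\int_0^1 Q_s(y)\,dy$ equals $\|\partial_y q_s\|_{L^1(\R)} \lesssim s^{-1/2}$, which is integrable near $s = 0$. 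This yields $\|J_1\|_p \leq Ce^{-cT_1/2}$, completing the bound. The only delicate point is the small-$s$ singularity in the moment bound on $B_i$, but since this singularity is $L^1$ in $(s,y)$ it causes no essential trouble---the exponential decay in $T_1$ survives intact.
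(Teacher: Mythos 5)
Your proof is correct and takes essentially the same route as the paper: your three-term telescoping of $A_2B_2C_2-A_1B_1C_1$ is exactly the paper's splitting of $J_1$ into $J_{11}=A_1+A_2$ plus the analogous $J_{12}$ term, estimated with the same ingredients (\eqref{e.7212} of Proposition~\ref{p.burgersofos}, Lemma~\ref{l.mixpolymer}, H\"older, and the moment bounds of Lemmas~\ref{l.bdu}, \ref{l.bdrho}, \ref{l.bdU}) and the same integrable majorant in $(s,y)$. One small nit: after you bound $e^{-c(T_1-s)}\le e^{-cT_1/2}$ and pull it out, the remaining integral over $[0,T_1/2]\times[0,1]$ is of order $T_1$ rather than uniformly bounded (the $s\ge 1$ region contributes linearly), but this polynomial factor is absorbed into the exponential, so the claimed bound holds with a slightly smaller constant $c$.
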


\begin{proof}
We can further write $J_1=J_{11}+J_{12}$, with 
\[
\begin{aligned}
J_{11}=\int_0^{\frac12T_1}\int_0^1&[u_{\theta}(-s,y;-T_2)\bar{\rho}_{\theta}(0,x;s,y;-T_2)-u_{\theta}(-s,y;-T_1)\bar{\rho}_{\theta}(0,x;s,y;-T_1)]\\
&\times \U_\theta(0,x;-s,y) dyds=A_1+A_2,
\end{aligned}
\]
where we define
\[
A_1=\int_0^{\frac12 T_1}\int_0^1[u_{\theta}(-s,y;-T_2)-u_{\theta}(-s,y;-T_1)]\bar{\rho}_{\theta}(0,x;s,y;-T_2) \U_\theta(0,x;-s,y) dyds,
\]
and
\[
    \begin{aligned}
        A_2&=\int_0^{\frac12 T_1}\int_0^1 u_\theta(-s,y;-T_1)[\bar{\rho}_{\theta}(0,x;s,y;-T_2)-\bar{\rho}_{\theta}(0,x;s,y;-T_1)]\U_{\theta}(0,x;-s,y)\\&\hspace{4.5in}dyds.
\end{aligned}
\]
For $A_1$, by Proposition~\ref{p.burgersofos} and Lemmas~\ref{l.bdrho} and \ref{l.bdU}, we have 
\[
\begin{aligned}
\|A_1\|_p&\leq C\int_0^{T_1/2}\int_0^1 e^{-c(T_1-s)}(1_{s<1}G_s(y-\theta s-x)+1_{s\geq 1})\big(1+1_{s<1}\tfrac{Q_s(y-\theta s-x)}{G_s(y-\theta s -x)}\big) dyds\\
&\leq Ce^{-cT_1}.
\end{aligned}
\]
For $A_2$, the proof is the same except that we apply Lemma~\ref{l.mixpolymer} rather than Proposition~\ref{p.burgersofos}. This shows that $\|J_{11}\|_p \leq Ce^{-cT_1}$. The proof for $J_{12}$ is very similar, so we omit it.
\end{proof}

Combining Lemmas~\ref{l.J23} and \ref{l.J1}, we conclude that for $T_2\geq T_1\geq 10$, 
\[
\|g_{\theta}(0,x;-T_2)-g_{\theta}(0,x;-T_1)\|_p\leq Ce^{-cT_1}.
\]
 
One can actually write the limit $\tilde{g}_\theta(0,x)=\lim_{T\to\infty} g_\theta(0,x;-T)$ explicitly. Since $\bar{\rho}_\theta(0,x;s,y;-T)$ converges in $L^p(\Omega)$ as $T\to\infty$, in light of  Lemma~\ref{l.mixpolymer}, let us denote the limit by $\bar{\rho}_\theta(0,x;s,y;-\infty)$. \blue{For the first integral on the r.h.s of \eqref{e.exgtheta}, we can apply  Lemma~\ref{l.bdrho} and Proposition~\ref{p.burgersofos} to derive
\[
\int_0^1 \phi_{\ic}(y)\bar{\rho}_{\theta}(0,x;T,y;-T)[\U_\theta(0,x;-T,y)-u_\theta(0,x;-T)]dy\to0
\]
in $L^p(\Omega)$ as $T\to\infty$ (recall that by our assumption $\phi_{\ic}$ is bounded).} Then, as in the above proofs, we can pass to the limit in \eqref{e.exgtheta} and obtain 
\begin{equation}\label{e.explicitg}
\tilde{g}_\theta(0,x)=1+\int_0^\infty\int_0^1\scU_{\theta}(-s,y)\bar{\rho}_{\theta}(0,x;s,y;-\infty) [\U_\theta(0,x;-s,y)-\scU_\theta(0,x)]dyds,
\end{equation}
where we recall that $\scU_{\theta}(t,x)=\lim_{T\to\infty}u_\theta(t,x;-T)$ is the global solution of the stochastic Burgers equation. The proof of Theorem~\ref{t.ofos} is complete.


\section{Proof of Theorems~\ref{t.mainth} and \ref{t.envi}}
\label{s.proofmain}

In this section, we first prove Theorem~\ref{t.mainth}, through an application of Theorem~\ref{t.ofos}. Then we give the proof of Theorem~\ref{t.envi} and discuss the interpretation of $\tilde{g}_\theta(0,0)$ as the Radon-Nikodym derivative of the environment seen from the particle advected by the Burgers flow. We continue to assume that $\xi$ is smooth in space.

We first show that the $\tilde{g}_\theta(t,x)$, obtained in Theorem~\ref{t.ofos}, is a global solution to the Fokker-Planck equation \eqref{e.fk1}. Recall that $g_\theta$ solves \eqref{e.fpg}, so for any $T>0$ and $t>s\geq -T$, by the mild formulation of the Fokker-Planck equation, we have
\begin{equation}\label{e.mildfk}
\begin{aligned}
g_\theta(t,x;-T)=&\int_\bT G_{t-s}(x-y)g_\theta(s,y;-T)dy\\
&+\int_s^t\int_{\bT} \partial_x G_{t-\ell}(x-y)u_{\theta}(\ell,y;-T)g_\theta(\ell,y;-T)dyd\ell.
\end{aligned}
\end{equation}
\blue{Here we have applied integration by parts and the fact that $\partial_x G_{t-\ell}(x-y)=-\partial_y G_{t-\ell}(x-y)$.}
Sending $T\to\infty$ on both sides of \eqref{e.mildfk}, \blue{since $\partial_x G_{t-\ell}(x-y)$ is locally integrable as a function of $(\ell,y)$,} we can apply Theorem~\ref{t.ofos} and Proposition~\ref{p.burgersofos}, we obtain 
\[
\begin{aligned}
\tilde{g}_\theta(t,x)=\int_\bT G_{t-s}(x-y)\tilde{g}_\theta(s,y)dy+\int_s^t\int_{\bT} \partial_x G_{t-\ell}(x-y)\scU_{\theta}(\ell,y)\tilde{g}_\theta(\ell,y)dyd\ell,
\end{aligned}
\]
and this shows that $\tilde{g}_\theta$ is a global solution to \eqref{e.fk1}. 

It is straightforward to check that $\tilde{g}_\theta$ is spacetime stationary. 

Next, we claim that,  if  the initial data is chosen to be $g_\theta(-T,x;-T)=1$, then 
 \begin{equation}\label{e.fkex1}
\EE g_\theta(t,x;-T)=1.
\end{equation}
 This comes from the fact that the Fokker-Planck equation preserves expectation, as can be seen by taking expectations in the mild formulation \eqref{e.mildfk}. More precisely, we have from \eqref{e.mildfk} that
\begin{equation}\label{e.9261}
\begin{aligned}
\EE g_\theta(t,x;-T)=&\int_\bT G_{t-s}(x-y)\EE g_\theta(s,y;-T)dy\\
&+\int_s^t\int_{\bT} G_{t-\ell}(x-y)\EE\big[\partial_y \big(u_{\theta}(\ell,y;-T)g_\theta(\ell,y;-T)\big)\big]dyd\ell.
\end{aligned}
\end{equation}
Since $g_\theta(-T,x;-T)$ is a constant, we know that, for each fixed  $s\geq -T$, $\{g_\theta(s,x;-T)\}_{x\in\bT}$ is stationary, so the expectation $\EE g_\theta(s,x;-T)$ does not depend on the spatial variable, which we will denote by $f(s;-T)$. Furthermore, for each $\ell$, since we know that the process $\{u_{\theta}(\ell,y;-T)g_\theta(\ell,y;-T)\}_{y\in\bT}$ is smooth and stationary, we have \[\EE\big[\partial_y \big(u_{\theta}(\ell,y;-T)g_\theta(\ell,y;-T)\big)\big]=0.\] Combining these two facts, we conclude from \eqref{e.9261} that 
\[
f(t;-T)=f(s;-T), \quad\quad t>s,
\]
which shows that the expectation is preserved and \eqref{e.fkex1} holds. Using \eqref{e.fkex1} and the convergence of $g_\theta(t,x;-T)\to \tilde{g}_\theta(t,x)$ in $L^p(\Omega)$ for any $p\in[1,\infty)$, we conclude that $\EE \tilde{g}_\theta(t,x)=1$. 

Lastly, since $g_\theta(t,x;-T)$ is strictly positive, by the convergence result in Theorem~\ref{t.ofos}, we know that $\tilde{g}_\theta(t,x)\geq0$. Furthermore, since we have $\int_0^1 g_\theta(t,x;-T)dx=1$ for all $T$, it implies that $\int_0^1 \tilde{g}_\theta(t,x)dx=1$ almost surely. Thus, we can conclude that, for any $t\in\R $, the nonnegative function $\tilde{g}_\theta(t,\cdot)$ is not identically zero almost surely. \blue{Applying the strong maximum principle for the Fokker--Planck equation, namely, the strict positivity of the fundamental solution, see \cite[Chapter 2, Theorem 11]{FRI08}, we conclude that $\tilde{g}_\theta$ is strictly positive, almost surely. Alternatively, one can rewrite the Fokker-Planck equation as $\partial_t \tilde{g}_\theta=\frac12\Delta \tilde{g}_\theta+\mathscr{U}_\theta \nabla \tilde{g}_\theta+\tilde{g}_\theta \nabla \mathscr{U}_\theta$, and write the solution by the Feynman-Kac formula as in \cite[7.6 Theorem]{KS14}, and use the strict positivity of the polymer endpoint density to conclude it. }



\subsection{Uniqueness}
In this section, we show that $\tilde{g}_\theta$ is the unique global solution to \eqref{e.fk1}, which is the last piece we need to complete the proof of Theorem~\ref{t.mainth}. This comes from the following proposition: 
\begin{proposition}\label{p.congnew}
If we consider $g_\theta$ as the solution to the following equation:
\begin{equation}\label{e.fk2}
\begin{aligned}
\partial_t 
g_{\theta}(t,x;-T)=\frac12\Delta g_{\theta}(t,x;-T)+\nabla(\scU_\theta(t,x)g_{\theta}(t,x;-T)), \quad\quad t>-T,x\in\bT,
\end{aligned}
\end{equation}
with initial data $g_\theta(-T,x;-T)=g_{\ic}(x)\geq0$ satisfying $\int_{\bT} g_{\ic}(x)dx=1$, then the same result as in Theorem~\ref{t.ofos}   holds, namely, $g_\theta(t,x;-T)$ converges in $L^p(\Omega)$ to  $\tilde{g}_\theta(t,x)$ given by \eqref{e.explicitg}, which does not depend on $g_{\ic}$.
\end{proposition}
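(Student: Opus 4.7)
The plan is to parallel the proof of Theorem~\ref{t.ofos} in Section~\ref{s.ofos}, with the stationary drift $\scU_\theta$ playing the role of $u_\theta(\cdot,\cdot;-T)$ and the general initial data $g_{\ic}$ in place of the constant $1$. Everything in Section~\ref{s.ofos}---probabilistic representation, moment estimates, exponential mixing---has a natural analog, and the ingredients developed there can largely be reused.

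First, I would establish a probabilistic representation analogous to \eqref{e.exgtheta}. Writing $\scU_\theta=\partial_x\log \mathscr{Z}_\theta$ via Hopf--Cole for a global stationary solution $\mathscr{Z}_\theta$ of the SHE and using $\int_\bT \scU_\theta\,dx=\theta$ to obtain $\mathscr{Z}_\theta(t,y+n)=e^{\theta n}\mathscr{Z}_\theta(t,y)$, the quenched density on $\bT$ of the SDE $dX_s=-\scU_\theta(-s,X_s)\,ds+dB_s$ with $X_0=x$ takes, by a Feynman--Kac/Girsanov computation mirroring Lemma~\ref{l.exrho}, the form
\[
\bar{\rho}^\star_\theta(0,x;s,y)=\frac{\mathscr{Z}_\theta(-s,y)\sum_{n}e^{\theta n}\cZ_{0,-s}(x,y+n)}{\mathscr{Z}_\theta(0,x)}.
\]
An analog of Lemma~\ref{l.debarrho} then yields $\partial_x\log\bar{\rho}^\star_\theta(0,x;s,y)=\U_\theta(0,x;-s,y)-\scU_\theta(0,x)$. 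Using $g_\theta=1+\partial_x\phi_\theta$ exactly as in Section~\ref{s.bfp}, I arrive at
\begin{align*}
g_\theta(0,x;-T)&=1+\int_0^1\phi_{\ic}(y)\bar{\rho}^\star_\theta(0,x;T,y)[\U_\theta(0,x;-T,y)-\scU_\theta(0,x)]\,dy\\
&\quad+\int_0^T\int_0^1\scU_\theta(-s,y)\bar{\rho}^\star_\theta(0,x;s,y)[\U_\theta(0,x;-s,y)-\scU_\theta(0,x)]\,dy\,ds.
\end{align*}

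Second, I would verify that the moment estimates of Section~\ref{s.ofos} transfer. Bounds on $\scU_\theta$ follow from Lemma~\ref{l.bdu} together with the $L^p$ convergence $u_\theta(\cdot,\cdot;-T)\to\scU_\theta$ from Proposition~\ref{p.burgersofos}. For $\bar{\rho}^\star_\theta$, I would identify it with the $L^p$ limit $\lim_{T\to\infty}\bar{\rho}_\theta(0,x;s,y;-T)$ provided by Lemma~\ref{l.mixpolymer}, which transports the estimate of Lemma~\ref{l.bdrho} verbatim. Combined with the exponential bound $\|\U_\theta(0,x;-s,y)-\scU_\theta(0,x)\|_p\le Ce^{-cs}$ following from Proposition~\ref{p.burgersofos}, these give absolute convergence of the $s$-integral, and the $\phi_{\ic}$-contribution vanishes as $T\to\infty$ because $\phi_{\ic}$ is bounded. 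Sending $T\to\infty$ produces
\[
1+\int_0^\infty\int_0^1\scU_\theta(-s,y)\bar{\rho}_\theta(0,x;s,y;-\infty)[\U_\theta(0,x;-s,y)-\scU_\theta(0,x)]\,dy\,ds,
\]
which matches \eqref{e.explicitg} and, crucially, is independent of $g_{\ic}$.

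The principal technical hurdle is the Girsanov/Hopf--Cole identification $\bar{\rho}^\star_\theta=\bar{\rho}_\theta(0,x;\cdot,\cdot;-\infty)$: this requires a careful normalization of $\mathscr{Z}_\theta$ and a justification that the quenched density of the diffusion driven by the stationary drift $\scU_\theta$ is genuinely the $L^p$ limit of the finite-horizon quenched densities analyzed in Section~\ref{s.ofos}. Everything else is a bookkeeping exercise that reuses the estimates already in hand.
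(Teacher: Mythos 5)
Your proposal is correct and follows essentially the same route as the paper: there, too, the quenched mid-point density for the drift $\scU_\theta$ is rewritten as a polymer density with stationary terminal weight $e^{H_\theta(-T,\cdot)}$, where $H_\theta(-T,\cdot)$ is an anti-derivative of $\scU_\theta(-T,\cdot)$ (equivalent, up to a multiplicative constant that cancels in the Gibbs ratio, to your global solution $\mathscr{Z}_\theta$), yielding exactly your representation of $g_\theta(0,x;-T)$, after which the moment and mixing estimates of Section~\ref{s.ofos} are reused to pass to the limit. The identification you flag as the main hurdle is precisely what the paper handles via \cite[Lemma 4.2]{GK23} together with the assertion that this density satisfies the bound of Lemma~\ref{l.bdrho} and converges to the same limit $\bar{\rho}_\theta(0,x;s,y;-\infty)$ as in Lemma~\ref{l.mixpolymer}.
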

Comparing \eqref{e.fk2} with \eqref{e.fk3},  the only difference is the coefficient $u_\theta$ being replaced by $\scU_\theta$. The proof is almost identical, so we only briefly sketch it below and discuss the places where changes are needed.

The key difference is the formula for the quenched mid-point density $\rho_\theta(0,x;s,y;-T)$. \blue{This is because the ``terminal'' condition for the directed polymer is changed from being flat to being stationary.}

By   \cite[Lemma 4.2]{GK23}, we have in this case 
\[
\begin{aligned}
\rho_{\theta}(0,x;s,y;-T)&=\frac{\E[e^{\int_0^T \xi(-t,x+B_t)dt}e^{H_\theta(-T,x+B_T)}\delta_y(x+B_s) ]}{\E[e^{\int_0^T \xi(-t,x+B_t)dt}e^{H_\theta(-T,x+B_T)}]}\\
&=\frac{\cZ_{0,-s}(x,y)\int_{\R} \cZ_{-s,-T}(y,w)e^{H_\theta(-T,w)}dw}{\int_{\R}\cZ_{0,-T}(x,w)e^{H_\theta(-T,w)}dw}.
\end{aligned}
\]
Here $H_\theta(-T,\cdot)$ can be chosen as  any anti-derivative of $\scU_\theta(-T,\cdot)$ (adding any constant to $H_\theta$ does not change the expression). \blue{Note that we chose not to change the notation $\rho_\theta$, although one should distinguish the above expression from the one given in Lemma~\ref{l.exrho}. As mentioned above, the difference is minor and only comes from the ``terminal'' condition. We were hoping that the interested reader would be able to go over the same proof of Theorem~\ref{t.ofos} using the above $\rho_\theta$, without causing any extra confusion.
}

Recall that in Lemma~\ref{l.exrho} of  Section~\ref{s.ofos}, we instead have $H_\theta(-T,x)=\theta x$, which is the anti-derivative of $u_\theta(-T,x;-T)=\theta$. Since $\int_\bT \scU_\theta(t,\cdot) =\theta$, we can assume 
\[
H_\theta(-T,x)=\theta x+\mathscr{H}_\theta(-T,x),
\] where $\mathscr{H}_\theta(-T,\cdot)$ is a $1-$periodic function.

With the above change, \eqref{e.7191} becomes 
\begin{equation}\label{e.exgnew}
\begin{aligned}
g_{\theta}(0,x;-T)&=1+\int_0^1 \phi_{\ic}(y)\bar{\rho}_{\theta}(0,x;T,y;-T)[\U_\theta(0,x;-T,y)-\scU_\theta(0,x)]dy\\
&+\int_0^T\int_0^1\scU_{\theta}(-s,y)\bar{\rho}_{\theta}(0,x;s,y;-T) [\U_\theta(0,x;-s,y)-\scU_\theta(0,x)]dyds,
\end{aligned}
\end{equation}
where there are two changes: (i) $u_\theta\mapsto \scU_\theta$, (ii) the periodized mid-point density in \eqref{e.9271} becomes 
\[
\bar{\rho}_\theta(0,x;s,y;-T)=\frac{\G_{0,-s}^\theta(x,y-\theta s)\int_0^1 \G_{-s,-T}^\theta(y-\theta s,w)e^{\mathscr{H}_\theta(-T,\theta T+w)}dw}{\int_0^1 \G_{0,-T}^\theta(x,w)e^{\mathscr{H}_\theta(-T,\theta T+w)}dw}.
\]
One can check that the same moment estimate in Lemma~\ref{l.bdrho} holds, and moreover, as $T\to\infty$, $\bar{\rho}_\theta(0,x;s,y;-T)$ converges to $\bar{\rho}_\theta(0,x;s,y;-\infty)$, the same limit obtained in Lemma~\ref{l.mixpolymer}. 

Therefore, one can follow the same proof and pass to the limit of $T\to\infty$ in \eqref{e.exgnew} to derive the convergence of $g_\theta(0,x;-T)$, with the limit taking the form 
\[
\tilde{g}_\theta(0,x)=1+\int_0^\infty\int_0^1\scU_{\theta}(-s,y)\bar{\rho}_{\theta}(0,x;s,y;-\infty) [\U_\theta(0,x;-s,y)-\scU_\theta(0,x)]dyds.
\]
This completes the proof of Proposition~\ref{p.congnew}.

\subsection{Environment seen from the particle}

In this section, we provide another interpretation of the random variable $\tilde{g}_\theta(0,0)$, as a Radon-Nikodym derivative associated with a particle advected by the Burgers drift, and complete the proof of Theorem~\ref{t.envi}. Recall that
\[
\frac{d\mathscr{Q}}{d\mathscr{P}}=\tilde{g}_\theta(0,0).
\] and $\omega_t=\tau_{(t,Y_t)}\omega$ with  $Y_t$ solving the SDE 
\[
dY_t=-\scU_\theta(t,Y_t)dt+dB_t, \quad\quad Y_0=0.
\]

\begin{proof}
Throughout the proof, we use $\EE_{\mathscr{Q}}, \EE_{\mathscr{P}}$ for the \blue{total} expectation when $\omega_0$ is sampled from $\mathscr{Q}, \mathscr{P}$ respectively. \blue{In other words, $\EE_{\mathscr{Q}}, \EE_{\mathscr{P}}$ average with respect to all sources of randomnesses, including both the random environments and the driving Brownian motion.}

To show $\mathscr{Q}$ is an invariant measure of $\omega_t$, it suffices to show that for any bounded $F:C(\R\times\bT)\to\R$, it holds that (note $\omega_0=\omega$)
\begin{equation}\label{e.7241}
\EE_{\mathscr{Q}}F(\omega_t)=\EE_{\mathscr{Q}}F(\omega_0), \quad\quad \mbox{ for any } t>0.
\end{equation} 
To show $\omega_t$ converges in distribution to $\mathscr{Q}$, it is enough to show 
\begin{equation}\label{e.7242}
\EE_{\mathscr{P}}F(\omega_t)\to \EE_{\mathscr{Q}}F(\omega).
\end{equation}

First, we can write 
\[
\EE_{\mathscr{Q}}F(\omega_t)=\EE_{\mathscr{Q}}F(\tau_{(t,Y_t)}\omega)=\EE_{\mathscr{P}}F(\tau_{(t,Y_t)}\omega)\tilde{g}_\theta(0,0;\omega),
\]
where we have spelled out  the dependence of $\tilde{g}_\theta$ on $\omega$. We introduce another quantity: let $\mathscr{G}_{t,s}(x,y;\omega)$ be the propagator of the Fokker-Planck equation satisfied by $\tilde{g}_\theta$, namely, for any $t>s$ and $x,y\in\bT$, we have 
\[
\tilde{g}_\theta(t,x;\omega)=\int_{\bT}\mathscr{G}_{t,s}(x,y;\omega)\tilde{g}_\theta(s,y;\omega)dy.
\]
It is clear that one can view $\mathscr{G}$ as the transition density of the underlying diffusion, so the quenched density of $Y_t$ is $\mathscr{G}_{t,0}(\cdot,0;\omega)$. \blue{For the existence of the propagator, or the so-called fundamental solution, in our setting and the interpretation in terms of the quenched density, we refer to \cite[Page 368--369]{KS14} -- one can check the conditions listed in \cite[7.8 Remark]{KS14} apply in our case since the coefficient $\mathscr{U}_\theta$ is smooth and periodic  in the spatial variable. The spatial smoothness of the global solution to the stochastic Burgers equation is a folklore, and one can find a proof e.g.\ in \cite[Section 2.3-2.4]{DGR21}.}

In this way, one can write (recall that $Y_0=0$)
\blue{
\[
\E F(\tau_{(t,Y_t)}\omega) = \int_{\bT} \mathscr{G}_{t,0}(x,0;\omega)F(\tau_{(t,x)}\omega)dx,
\]
where the expectation $\E$ on the left is taken with respect to the driving Brownian motion. In other words, when evaluating the expectation $\EE_{\mathscr{P}}$, we first average out the Brownian motion.} Now we have 
\[
\begin{aligned}
\EE_{\mathscr{Q}}F(\omega_t)&=\EE_{\mathscr{P}}\big[\tilde{g}_\theta(0,0;\omega)\int_{\bT} \mathscr{G}_{t,0}(x,0;\omega)F(\tau_{(t,x)}\omega)dx\big]\\
&=\int_{\bT} \EE_{\mathscr{P}}\big[\tilde{g}_\theta(0,0;\omega)\mathscr{G}_{t,0}(x,0;\omega)F(\tau_{(t,x)}\omega) \big]dx\\
&=\int_{\bT} \EE_{\mathscr{P}}\big[\tilde{g}_\theta(0,0;\tau_{(-t,-x)}\omega)\mathscr{G}_{t,0}(x,0;\tau_{(-t,-x)}\omega)F(\omega) \big]dx.
\end{aligned}
\]
\blue{In the second ``='', we applied Fubini, while in the last ``='' we used the fact that $\tau_{(t,x)}$ preserves $\mathscr{P}$ (the variables $t,x$ are both fixed when we used the stationarity to derive that the two expectations equal to each other).} We note that \[
\begin{aligned}
&\tilde{g}_\theta(0,0;\tau_{(-t,-x)}\omega)=\tilde{g}_\theta(-t,-x;\omega),\\
&\mathscr{G}_{t,0}(x,0;\tau_{(-t,-x)}\omega)=\mathscr{G}_{0,-t}(0,-x;\omega),
\end{aligned}
\]
using which one can further derive
\[
\begin{aligned}
\EE_{\mathscr{Q}}F(\omega_t)&=\int_{\bT} \EE_{\mathscr{P}}\big[\tilde{g}_\theta(-t,-x;\omega)\mathscr{G}_{0,-t}(0,-x; \omega)F(\omega) \big]dx\\
&=\EE_{\mathscr{P}}\big[F(\omega)\int_{\bT}\tilde{g}_\theta(-t,-x;\omega)\mathscr{G}_{0,-t}(0,-x; \omega)dx\big]\\
&=\EE_{\mathscr{P}}\big[F(\omega)\tilde{g}_{\theta}(0,0;\omega)\big]=\EE_{\mathscr{Q}}F(\omega).
\end{aligned}
\]
This completes the proof of \eqref{e.7241}.

To show \eqref{e.7242}, by a similar argument we have
\[
\begin{aligned}
\EE_{\mathscr{P}} F(\omega_t)&=\EE_{\mathscr{P}}\int_{\bT} F(\tau_{(t,x)}\omega)\mathscr{G}_{t,0}(x,0;\omega)dx\\
&=\EE_{\mathscr{P}} F(\omega)\int_{\bT}\mathscr{G}_{t,0}(x,0;\tau_{(-t,-x)}\omega)dx=\EE_{\mathscr{P}} F(\omega)\int_{\bT}\mathscr{G}_{0,-t}(0,-x;\omega)dx.
\end{aligned}
\]
It is easy to see that $\int_{\bT}\mathscr{G}_{0,-t}(0,-x;\omega)dx=g_\theta(0,0;-t)$ with $g_\theta$ solving \eqref{e.fk2} with the initial data $g_\theta(-t,x;-t)\equiv1$. By Proposition~\ref{p.congnew}, we know that $g_\theta(0,0;-t)\to \tilde{g}_\theta(0,0)$ as $t\to\infty$, in $L^p(\Omega)$ for any $p\in[1,\infty)$. This implies that 
\[
\EE_{\mathscr{P}}F(\omega_t)\to \EE_{\mathscr{P}}F(\omega)\tilde{g}_\theta(0,0)=\EE_{\mathscr{Q}}F(\omega),
\]
which completes the proof.
\end{proof}

\section{The   spacetime white noise}
\label{s.white}

In this section, we consider the case when $\xi$ is a spacetime white noise on $\R\times \bT$ and prove Theorem~\ref{t.white}. Now the solution to the stochastic Burgers equation is only distribution-valued, and the Fokker-Planck equation \eqref{e.fk1} is formal since it has distribution-valued coefficients. 

Recall that \[
u_\theta(t,x;-T)=\partial_x \log Z_\theta(t,x;-T),
\]
with $Z_\theta(t,x;-T)$ solving 
the corresponding stochastic heat equation with initial data $Z_\theta(-T,x;-T)=e^{\theta x}$. Using the propagator of SHE, defined in \eqref{e.defZtsxy}, one can write it as 
\begin{equation}\label{e.greenre}
Z_{\theta}(t,x;-T)=\int_\R\cZ_{t,-T}(x,y)e^{\theta y}dy,
\end{equation}
so the differentiability of $Z_\theta$ in $\theta$ is clear. Define the Busemann function
\begin{equation}\label{e.defB}
\begin{aligned}
B_{\theta}(t,x;-T)&:=\int_0^x u_\theta(t,y;-T)dy\\
&=\log Z_\theta(t,x;-T)-\log Z_\theta(t,0;-T).
\end{aligned}
\end{equation}
The convergence of $B_\theta(t,x;-T)$ as $T\to\infty$ is well-known, and the proof can be e.g.\ adapted from the proof of \cite[Theorem 2.3]{GK21} or the one given in Section~\ref{s.pbofos} for Burgers equation. The convergence holds in $L^p(\Omega)$ for any $p\in [1,\infty)$, and we will denote the limit by $\mathscr{B}_\theta(t,x)$. \blue{More precisely, by the same shear transformation as  in Lemma~\ref{l.reduction}, we first reduce it to the case of $\theta=0$. Then by Remark~\ref{r.busemanrho} below, we further reduce it to the study of the polymer endpoint density, and this is precisely the content of \cite[Proposition 4.7]{GK21} and \cite[Theorem 2.3]{GK21}. }

To show Theorem~\ref{t.white}, the idea is the same as before: for any $\lambda$ and $t,x,T$, we write 
\[
B_\lambda(t,x;-T)-B_0(t,x;-T)=\int_0^\lambda \partial_\theta B_\theta(t,x;-T)d\theta,
\]
and try to pass to the limit of $T\to\infty$ on both sides.

For any $\theta\in\R$, by the representation in \eqref{e.greenre}, we have
\begin{equation}
\begin{aligned}
\partial_\theta \log Z_{\theta}(t,x;-T)
=\frac{\int_{\R} \cZ_{t,-T}(x,y)e^{\theta y}ydy}{\int_{\R}\cZ_{t,-T}(x,y)e^{\theta y}dy}.\\
\end{aligned}
\end{equation}
As before, it is enough to consider the case of $t=0$. By an approximation argument (by a mollification of $\xi$, the Feynman-Kac formula and passing to the limit) and the same proof for Lemma~\ref{l.reduction}, we have 
\begin{equation}\label{e.4245}
\partial_\theta\log Z_{\theta}(0,x;-T)=x+\theta T+\psi_\theta(0,x;-T),
\end{equation}
with 
\begin{equation}\label{e.quenchmean}
\psi_{\theta}(0,x;-T)=\frac{\int_{\R} \cZ^\theta_{0,-T}(x,y)(y-x)dy}{\int_{\R} \cZ^\theta_{0,-T}(x,y)dy}.
\end{equation}
We remind the readers that $\cZ^\theta$ is the propagator of the stochastic heat equation driven by $\xi^\theta(t,x)=\xi(t,x-\theta t)$. \blue{For the convenience of the reader, we present a proof of \eqref{e.4245} in the appendix.}

It is straightforward to check that $\psi_\theta$ is $1-$periodic in $x$, and one should view $\psi_\theta(0,x;-T)$ as the quenched mean of the endpoint displacement of the directed polymer, starting at $(0,x)$ and running backward in time. Or equivalently, by periodicity, one may view it as the winding number of the polymer path around a cylinder. 

Therefore, for any $\lambda\in\R, x\in\bT$ and $T>0$, we can write 
\begin{equation}\label{e.7251}
\begin{aligned}
B_\lambda(0,x;-T)-B_0(0,x;-T)&=\int_0^\lambda[\partial_\theta \log Z_{\theta}(0,x;-T)-\partial_\theta\log Z_{\theta}(0,0;-T)] d\theta\\
&=\int_0^\lambda [x+\psi_{\theta}(0,x;-T)-\psi_\theta(0,0;-T)]d\theta\\
&=\lambda x+\int_0^\lambda[\psi_{\theta}(0,x;-T)-\psi_\theta(0,0;-T)]d\theta.
\end{aligned}
\end{equation}
The factor $\lambda x$ simply comes from the drift, so the key is the second term: the quantity $\psi_\theta(0,x;-T)-\psi_\theta(0,0;-T)$ measures the difference between the quenched mean of the polymer paths starting at $(0,x)$ and $(0,0)$ respectively. By the fast mixing of the endpoint distribution of the directed polymer on a torus, one may expect $\psi_\theta(0,x;-T)-\psi_\theta(0,0;-T)$ to stabilize as $T\to\infty$. This is indeed the case and was studied in detail in \cite[Section 3]{GK23}. 

With the above preparations, now we can present the proof:

\begin{proof}[Proof of Theorem~\ref{t.white}]
To show the convergence of $\psi_\theta(0,x;-T)-\psi_\theta(0,0;-T)$ as $T\to\infty$, starting from the expression of \eqref{e.quenchmean} and the fact that $\xi^\theta\stackrel{\text{law}}{=}\xi$, we only need to consider the case of $\theta=0$. This was done in \cite{GK23}. The convergence is a direct consequence of the proof of \cite[Proposition 3.2]{GK23}, see also \cite[Remark 3.10]{GK23}.

To show \eqref{e.lawBB}, it suffices to consider the case of $\theta=0$. To ease the notation, we just write 
\[
\psi(0,x;-T)=\frac{\int_{\R} \cZ_{0,-T}(x,y)(y-x)dy}{\int_{\R} \cZ_{0,-T}(x,y)dy},
\]
and denote the limit of $\psi(0,x;-T)-\psi(0,0;-T)$  by $\tilde{\psi}(0,x)$.
Define a stationary version of $\psi$:
\[
\Psi(0,x;-T)=\frac{\int_{\R} \cZ_{0,-T}(x,y)(y-x)e^{\B(y)}dy}{\int_{\R} \cZ_{0,-T}(x,y)e^{\B(y)}dy},
\]
where $\B$ is a standard Brownian bridge connecting $(0,0)$ and $(1,0)$, periodically extended to $\R$. By \cite[Proposition 3.2]{GK23}, we have 
\[
\|[\psi(0,x;-T)-\psi(0,0;-T)]-[\Psi(0,x;-T)-\Psi(0,0;-T)]\|_p \leq Ce^{-cT}.
\]
This implies that $\Psi(0,x;-T)-\Psi(0,0;-T)\to \tilde{\psi}(0,x)$ in $L^p(\Omega)$, so that it is enough to study the asymptotic distribution of the increments of $\Psi$ (rather than $\psi$). By \cite[Proposition 4.8]{GK23}, we have (for any $T>0$)
\[
\{x+\Psi(0,x;-T)-\Psi(0,0;-T)\}_{x\in\bT}\stackrel{\text{law}}{=}\left\{\int_0^x \rho_T(y)dy\right\}_{x\in\bT},
\]
with the density
\[
\rho_T(y)=\int_{\bT} \frac{\G_{0,-T}(z,y)e^{\B_1(y)}}{\int_{\bT} \G_{0,-T}(z,y')e^{\B_1(y')}dy'} dz,
\]
where $\B_1$ a standard Brownian bridge independent of the noise $\xi$. By an application of \cite[Theorem 2.3]{GK21}, we have $\rho_T(\cdot)\Rightarrow \rho_\infty(\cdot)$ in distribution in $C(\bT)$, with 
\[
\rho_\infty(y)=\frac{  e^{\B_1(y)+\B_2(y)}}{\int_0^1 e^{\B_1(y')+\B_2(y')}dy'}.
\]
This shows that 
\[\{x+\Psi(0,x;-T)-\Psi(0,0;-T)\}_{x\in\bT}\Rightarrow\left\{\int_0^x \rho_\infty(y)dy\right\}_{x\in\bT} 
\] as $T\to\infty$, and completes the proof of \eqref{e.lawBB}.
\end{proof}

 \begin{remark}\label{r.density}
Let us explain the connection between Theorem~\ref{t.white} and Theorem~\ref{t.mainth}, at least on a heuristic level. Starting from the difference between the Busemann functions
\[
\mathscr{B}_\lambda(t,x )-\mathscr{B}_0(t,x )=\int_0^\lambda [x+\tilde{\psi}_\theta(t,x)]d\theta, 
\]
one can take the derivative with respect to $x$ on both sides to derive
\[
\mathscr{U}_\lambda(t,x)-\mathscr{U}_0(t,x)=\int_0^\lambda[1+\partial_x\tilde{\psi}_\theta(t,x)]d\theta.
\]
This corresponds to \eqref{e.mainidentity} in the statement of Theorem~\ref{t.mainth}, with $\tilde{g}_\theta(t,x)$ replaced by $1+\partial_x \tilde{\psi}_\theta(t,x)$. Further differentiating in $x$ on \eqref{e.lawBB} leads to 
\begin{equation}\label{e.lawBB1}
\{1+\partial_x\tilde{\psi}_\theta(t,x)\}_{x\in\bT}\stackrel{\text{law}}{=}\left\{\frac{ e^{\B_1(x)+\B_2(x)}}{\int_0^1 e^{\B_1(y)+\B_2(y)}dy}\right\}_{x\in\bT}.
\end{equation}
Thus, in the white noise case, the distribution of $\tilde{g}_\theta(t,\cdot)$ is explicit (for fixed $t$ and $\theta$), given by the r.h.s.\ of \eqref{e.lawBB1}, which is actually the ``stationary distribution'' of a directed polymer in the white noise environment. To see why it holds, let us recall that $\tilde{g}_\theta(t,\cdot)$ is the ``stationary distribution'' of the diffusion in the Burgers drift 
\[
dY_t=-\scU_\theta(t,Y_t)dt+dB_t.
\] In the white noise setting, $\{-\scU_\theta(t,x)\}_{t,x}$ has the same law as $\{\scU_\theta(-t,x)\}_{t,x}$, through which we can relate $Y_t$ to another diffusion 
\[
d\tilde{Y}_t=\scU_\theta(-t,Y_t)dt+dB_t,
\] but this is just another way of describing a directed polymer in the white noise environment, see \cite[Lemma 4.2]{GK23}. For $\tilde{Y}_t$, through the Gibbs measure formulation, one can derive the density expression on the r.h.s.\ of \eqref{e.lawBB1}.
\end{remark}

\appendix


\section{ Burgers equation with periodic forcing}
\label{s.aburgers}

In this section, we present some standard results on the stochastic Burgers equation with a \blue{smooth and  periodic forcing}. The one-force-one-solution principle stated below is based on \cite{GK21}, which dealt with a closely related object, the endpoint distribution of the directed polymer on a cylinder, and the proof is inspired by Sinai's classical work \cite{sinai}.

\blue{In this section, we consider the case of $\xi$ being smooth in the spatial variable.}

\subsection{Moment estimates}
\blue{Recall that $u_\theta$ solves the stochastic Burgers equation \eqref{e.burgers}, with the initial data $u_{\theta}(-T,x;-T)=\theta$. The following lemma provides a moment bound on $u_\theta$, that is uniform in $t$.}
\begin{lemma}\label{l.mmbd}
    For any $p\in [ 1,\infty)$, there exists $C=C(p,R)>0$ such that 
\begin{equation}\label{e.bdu0422}
\sup_{t\geq0 }\EE |u_0(t,0;0)|^p \leq C.
\end{equation}
\blue{In addition, we have 
\begin{equation}\label{e.aid}
\sup_{t>1,x,y\in\bT} \EE |\partial_x \log \G_{t,0}(x,y)|^p\leq C.
\end{equation}}
\end{lemma}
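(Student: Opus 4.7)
The strategy in both cases is to pass through the Hopf--Cole transformation and to recognize the logarithmic derivative as (the logarithmic derivative of) a polymer endpoint density on the torus, for which the needed uniform-in-time positive and inverse moment bounds are essentially available from \cite{GK23} and \eqref{e.7221}.

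For \eqref{e.bdu0422}, set $Z(t,x):=\int_0^1 \G_{t,0}(x,y)\,dy$, so that $u_0(t,x;0)=\partial_x\log Z(t,x)$ and $Z$ solves the SHE on $\bT$ with constant initial data $1$. Since $\int_0^1 Z(t,x')\,dx'$ is independent of $x$,
\[
u_0(t,x;0) \;=\; \partial_x\log\rhof(t,x;0,\mathrm{m}) \;=\; \frac{\partial_x\rhof(t,x;0,\mathrm{m})}{\rhof(t,x;0,\mathrm{m})},
\]
and Cauchy--Schwarz reduces the problem to bounding $\|\rhof^{-1}\|_{2p}$ and $\|\partial_x\rhof\|_{2p}$. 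For $t\geq 1$ these are standard polymer endpoint estimates on the torus: the first is \eqref{e.7221}, and the second follows by differentiating the definition of $\rhof$ and applying Lemma~\ref{l.bddegreen} after an intermediate-time smoothing step. For $t\in[0,1]$ the initial data $Z(0,\cdot)\equiv 1$ is smooth, and direct It\^o/BDG estimates for the SHE with spatially smooth noise yield $\|Z(t,0)-1\|_{2p}\les\sqrt{t}$ and $\|\partial_x Z(t,0)\|_{2p}\les\sqrt{t}$; a short-time perturbation off of $1$ then gives $\|Z(t,0)^{-1}\|_{2p}\leq C$, and Cauchy--Schwarz closes this range.

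For \eqref{e.aid}, the same algebraic identity gives $\partial_x\log\G_{t,0}(x,y)=\partial_x\log\rhof(t,x;0,\delta_y)$, since $\int_0^1\G_{t,0}(x',y)\,dx'$ is $x$-independent. Because $t>1$, the Dirac source has already been smoothed: splitting the propagator across time $1/2$ and absorbing the early piece into an $L^p$-bounded positive initial density at time $1/2$ reduces the problem to the setting of the previous paragraph, with bounds now uniform in $(x,y)\in\bT^2$.

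I expect the main technical point to be the uniform-in-time derivative moment bound $\|\partial_x\rhof(t,x;0,\nu)\|_p\leq C$ for $t\geq 1$; once the short-time regime is isolated by a half-step smoothing, it follows from differentiating the Feynman--Kac representation and applying BDG to the resulting stochastic integral, combined with the already cited polymer estimates. Everything else---positivity of $Z$, Cauchy--Schwarz, and the reduction to $\rhof$---is routine.
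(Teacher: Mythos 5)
Your treatment of the regime $t\geq 1$, and of \eqref{e.aid}, is essentially the paper's own argument: after Hopf--Cole you factor the propagator over the last unit time interval so that the contribution of the far past cancels between numerator and denominator, and then you combine the negative moments of $\inf_{y}\G_{t,t-1}(x,y)$ from \cite[Lemma 4.1]{GK21} with the derivative moment bounds of Lemma~\ref{l.bddegreen}; phrasing this through $\rhof(t,x;0,\mathrm{m})$ and $\rhof(t,x;0,\delta_y)$ rather than through $Z_0$ and $\G$ directly is only cosmetic. Do note that the cancellation of the time-$(t-1)$ factor is essential and must happen before any H\"older splitting, since the moments of $\int_\bT\G_{t-1,0}(z,y)\,dy$ are not bounded uniformly in $t$ (intermittency); and, as in the paper, you need the sup-in-$y$ version of the derivative bound, which requires a small Sobolev-type upgrade of \eqref{e.7201}--\eqref{e.7202}.

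The short-time regime $t\in[0,1]$ is where you genuinely depart from the paper, and it is where the gap is: the negative-moment bound $\|Z(t,0)^{-1}\|_{2p}\leq C$ does \emph{not} follow from positivity together with $\|Z(t,0)-1\|_{2q}\lesssim\sqrt t$. Chebyshev only yields $\Pb(Z(t,0)\leq 1/2)\lesssim t^{q}$, a bound that does not improve as the threshold decreases; to integrate $Z^{-2p}$ over the event $\{Z\leq 1/2\}$ you need small-ball estimates of the form $\Pb(Z(t,0)\leq\eps)\lesssim\eps^{q}$ with $q>2p$, uniformly in $t$, and no $L^q$ perturbation bound around $1$ supplies these: a positive random variable can be within $C\sqrt t$ of $1$ in every $L^q$ and still have infinite negative moments (take the value $e^{-1/t}$ with probability $t^{10}$, and $1$ otherwise). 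The standard fix---and the paper's actual argument for $t\leq 1$---is to use the Feynman--Kac representation together with Jensen's inequality: $\EE\,\big[\E\, e^{\int_0^t\xi(t-s,B_s)ds}\big]^{-n}\leq \EE\,\E\, e^{-n\int_0^t\xi(t-s,B_s)ds}=e^{\frac12 n^2R(0)t}$, because for each fixed Brownian path the exponent is a centered Gaussian of variance $R(0)t$. Your It\^o/BDG bound $\|\partial_x Z(t,0)\|_{2p}\lesssim\sqrt t$ for the numerator is fine (the paper instead bounds the numerator directly from Feynman--Kac by H\"older and Gaussian computations), but the negative-moment step must be replaced by an argument of this type rather than a perturbation off of $1$.
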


\begin{proof}
For $t\in[0,1]$, we write the solution by the Feynman-Kac formula:
\[
u_0(t,0;0)=\frac{\E e^{\int_0^t \xi(t-s,B_s)ds}\int_0^t\partial_x\xi(t-s,B_s)ds}{\E e^{\int_0^t \xi(t-s,B_s)ds}}.
\]
 For the denominator we have the negative moment bound 
\[
\sup_{t\in[0,1]}\EE |\E e^{\int_0^t \xi(t-s,B_s)ds}|^{-n} \leq C
\]
\blue{for arbitrary $n\in\Z_+$, which can be shown  using the Jensen's inequality and the calculation of the Gaussian moment generating function: \blue{ we have 
\[
\EE |\E e^{\int_0^t \xi(t-s,B_s)ds}|^{-n} \leq \EE\E e^{-n\int_0^t \xi(t-s,B_s)ds}=e^{\frac12n^2R(0)t}.
\] The last step comes from the fact that for each fixed realization of $B$, the random variable $\int_0^t \xi(t-s,B_s)ds\sim N(0,R(0)t)$. }
For the numerator, we apply H\"older inequality to derive 
\[
\begin{aligned}
&\EE |\E e^{\int_0^t \xi(t-s,B_s)ds}\textstyle\int_0^t\partial_x\xi(t-s,B_s)ds|^{2n}\\
&\leq \EE \E e^{2n\int_0^t \xi(t-s,B_s)ds}(\textstyle\int_0^t\partial_x\xi(t-s,B_s)ds)^{2n}\\
& \leq \sqrt{\EE \E e^{4n\int_0^t \xi(t-s,B_s)ds}}\sqrt{\EE\E (\textstyle\int_0^t\partial_x\xi(t-s,B_s)ds)^{4n}}.
\end{aligned}
\]
The first factor is computed in the same way as before. For the second factor,  for each fixed realization of the Brownian motion, we have $\int_0^t \partial_x \xi(t-s,B_s)\sim N(0, -R''(0)t)$, so it is enough to evaluate the expectation $\EE$ first to complete the proof for $t\in[0,1]$.}

For $t>1$, we employ a different proof. The Hopf-Cole transformation says that 
\[
u_0(t,x;0)= \partial_x \log Z_0(t,x;0),
\]
where $Z_0$ solves the stochastic heat equation with initial data $Z_0(0,x;0)=1$, so we can view $Z_0$ as the solution to the equation on the torus. By the propagator $\G$, we can write 
\[
Z_0(t,x;0)=\int_{\bT} \G_{t,t-1}(x,y)Z_0(t-1,y;0)dy,
\]
and 
\[
\partial_x Z_0(t,x;0)=\int_{\bT}\partial_x \G_{t,t-1}(x,y)Z_0(t-1,y;0)dy.
\]
This implies that 
\[
|\partial_x \log Z_0(t,x;0)| \leq \frac{\sup_y |\partial_x \G_{t,t-1}(x,y)|}{ \inf_y \G_{t,t-1}(x,y)}.
\]
By \cite[Lemma 4.1]{GK21}, we have $\EE |\inf_y \G_{t,t-1}(x,y)|^{-p} \leq C$. Applying Lemma~\ref{l.bddegreen} below, we have $\EE \big(\sup_y |\partial_x \G_{t,t-1}(x,y)|\big)^p \leq C$. This completes the proof of \eqref{e.bdu0422}. The same proof also implies \eqref{e.aid}.
\end{proof}


The following lemma is on the moment bounds on the derivatives of the stochastic heat equation propagator.

\begin{lemma}\label{l.bddegreen}
For $t\in(0,2]$, we have 
\begin{equation}\label{e.7201}
\begin{aligned}
&\|\G_{t,0}(x,y)^{-1}\|_p \leq C G_t(x-y)^{-1},\\
&\|\partial_x \G_{t,0}(x,y)\|_p \leq C\big(G_t(x-y)+Q_t(x-y)\big).
\end{aligned}
\end{equation}
For $t\in[\frac12,2]$, we have 
\begin{equation}\label{e.7202}
\|\partial_x\partial_ y\G_{t,0}(x,y)\|_p  \leq C.
\end{equation}
\end{lemma}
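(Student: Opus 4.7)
The plan is to combine the Feynman--Kac representation of $\G$ with Jensen's inequality and a Cameron--Martin straightening of the Brownian bridge. I would begin by writing, for each $n\in\Z$,
\[
\cZ_{t,0}(x,y+n) = q_t(x-y-n)\,M_t^{(n)},\qquad M_t^{(n)} := \E^{x\to y+n}\!\Big[\exp\Big(\int_0^t \xi(t-s,B_s)\,ds - \tfrac12 R(0)t\Big)\Big],
\]
where $\E^{x\to y+n}$ is expectation under a Brownian bridge of duration $t$ from $x$ to $y+n$, so that $\G_{t,0}(x,y) = \sum_n q_t(x-y-n)\,M_t^{(n)}$. Since each $M_t^{(n)}$ is the exponential of a Gaussian functional, conditioning on the bridge and using the Gaussian moment-generating function yields $\|M_t^{(n)}\|_p + \|(M_t^{(n)})^{-1}\|_p \le C$ uniformly in $n$ and $t\in(0,2]$. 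The same reasoning applied to the toroidal-bridge representation $\G_{t,0}(x,y) = G_t(x-y)\,\wtilde M_t(x,y)$, combined with Jensen's inequality $\wtilde M_t(x,y)^{-1} \le \E^{\wtilde B}[\exp(-\int_0^t \xi(t-s,\wtilde B_s)\,ds + \tfrac12 R(0)t)]$ and Minkowski in the bridge expectation, immediately gives the negative-moment bound $\|\G_{t,0}(x,y)^{-1}\|_p \le C G_t(x-y)^{-1}$.

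For the spatial derivative, I would differentiate termwise:
\[
\partial_x \G_{t,0}(x,y) = \sum_n \partial_x q_t(x-y-n)\,M_t^{(n)} + \sum_n q_t(x-y-n)\,\partial_x M_t^{(n)}.
\]
Minkowski together with the uniform bound on $M_t^{(n)}$ controls the first sum by $CQ_t(x-y)$. For the second sum, the key step is the Cameron--Martin shift $B_s = x(1-s/t) + (y+n)(s/t) + \tilde B_s$ with $\tilde B$ a standard bridge from $0$ to $0$: after this shift the bridge measure is $x$-independent and differentiation pulls down the explicit factor $\int_0^t (1-s/t)\,\partial_x\xi(t-s,\cdot)\,ds$. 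Conditional on $\tilde B$ this factor is Gaussian with variance $\int_0^t (1-s/t)^2(-R''(0))\,ds \le Ct$, so Cauchy--Schwarz in $\E^{\tilde B}$ combined with a Gaussian MGF estimate for the exponential factor gives $\|\partial_x M_t^{(n)}\|_p \le C$ uniformly in $n$. Summing against the weights $q_t(x-y-n)$ then produces the complementary bound $CG_t(x-y)$.

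For the mixed second derivative on $t\in[\tfrac12,2]$, I would use the semigroup decomposition
\[
\G_{t,0}(x,y) = \int_\bT \G_{t,t/2}(x,z)\,\G_{t/2,0}(z,y)\,dz,
\]
distribute $\partial_x,\partial_y$ onto the two factors, and exploit the fact that the two propagators depend on $\xi$ over disjoint time intervals and are therefore independent, so that $\|XY\|_p = \|X\|_p\|Y\|_p$. Invoking the already-proved bound at time horizon $t/2\in[\tfrac14,1]$ reduces matters to
\[
\|\partial_x\partial_y \G_{t,0}(x,y)\|_p \le C\int_\bT (G_{t/2}+Q_{t/2})(x-z)\,(G_{t/2}+Q_{t/2})(z-y)\,dz,
\]
which is $O(1)$ uniformly in $x,y\in\bT$ because $G_{t/2}$ and $Q_{t/2}$ are bounded on $\bT$ once $t/2\ge 1/4$.

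The main obstacle will be the uniform-in-$n$ estimate $\|\partial_x M_t^{(n)}\|_p \le C$. Differentiating directly under the Brownian-bridge measure is awkward because the law of the bridge itself depends on the endpoint $x$; the Cameron--Martin straightening is what makes the calculation tractable, turning that dependence into an explicit affine one so that the $x$-derivative becomes a Gaussian functional whose $L^p$ norm is a standard moment-generating-function estimate. Uniformity in $n$ is then a consequence of the spatial stationarity of $\xi$, which prevents the Gaussian variances in the estimate from depending on the integer shift.
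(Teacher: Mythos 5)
Your treatment of \eqref{e.7201} is essentially the paper's own argument: the same decomposition $\G_{t,0}(x,y)=\sum_n q_t(x-y-n)X_n$ with $X_n$ a pinned-bridge expectation, Jensen plus a Gaussian moment-generating-function computation for the negative moment, and differentiation after straightening the bridge as $x+\frac{y+n-x}{t}s+\tilde B_s$, which pulls down the Gaussian factor $\int_0^t(1-\frac{s}{t})\partial_x\xi\,ds$ with variance of order $-R''(0)t$; this matches \eqref{e.7203} exactly (note the straightening is just the affine representation of the bridge, not really a Cameron--Martin change of measure, but the computation is the same). Where you genuinely depart from the paper is \eqref{e.7202}: the paper simply differentiates \eqref{e.7203} once more in $y$ and estimates directly (declaring it standard), while you split $\G_{t,0}(x,y)=\int_\bT\G_{t,t/2}(x,z)\G_{t/2,0}(z,y)\,dz$, put one derivative on each factor, and use independence of the noise on $[0,t/2]$ and $[t/2,t]$ so that the $L^p$ norm factorizes. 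This is a clean alternative, but as written it has a small gap: the bound you invoke for the second factor is on $\partial_y\G_{t/2,0}(z,y)$, a derivative in the \emph{backward} (initial) spatial variable, whereas \eqref{e.7201} is proved only for $\partial_x\G_{t,0}(x,y)$, and pathwise $\partial_y\G\neq-\partial_x\G$ since the kernel is not a function of $x-y$ alone. You need either to rerun the bridge computation differentiating in $y$ (the derivative of $q_t(x-y-n)$ is again controlled by $Q_t$, and the exponential now contributes $\int_0^t\frac{s}{t}\partial_x\xi\,ds$, which has the same Gaussian variance bound), or to note that by reversing the bridge and the white-in-time noise the field $(x,y)\mapsto\G_{t,0}(x,y)$ has the same law as $(x,y)\mapsto\G_{t,0}(y,x)$, so the $\partial_y$ bound follows in law. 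With that one-line addition, your argument goes through, and the convolution $\int_\bT(G_{t/2}+Q_{t/2})(x-z)(G_{t/2}+Q_{t/2})(z-y)\,dz$ is indeed uniformly bounded for $t\in[\frac12,2]$.
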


\begin{proof}
First, we can write $\G_{t,0}(x,y)$ as 
\[
\begin{aligned}
&\G_{t,0}(x,y)=\sum_n \cZ_{t,0}(x,y+n)=\sum_n\E \big[e^{\int_0^t \xi(t-s,x+B_s)ds-\frac12R(0)t}\delta_{y+n}(x+B_t)\big]\\
&=\sum_nq_t(x-y-n)\E \big[e^{\int_0^t \xi(t-s,x+B_s)ds-\frac12R(0)t}\,|\,x+B_t=y+n\big]=\sum_n q_t(x-y-n)X_n,
\end{aligned}
\]
where we have \blue{denoted} the term $\E[\cdot]$ by $X_n$ to simplify the notation. 

To estimate $\EE \G_{t,0}(x,y)^{-p}$, we apply Jensen's inequality to derive 
\[
\EE \G_{t,0}(x,y)^{-p} \leq G_t(x-y)^{-p}\sum_n \frac{q_t(x-y+n)}{G_t(x-y)}\EE X_n^{-p}.
\]
By applying Jensen's inequality again and a Gaussian calculation, it is straightforward to check that $\EE X_n^{-p}\leq C$, and this completes the proof of the first inequality in \eqref{e.7201}. 

To estimate the derivative in $x$, we first note that, under the condition $x+B_t=y+n$, the process $\{x+B_s\}_{s\in[0,t]}$ is a Brownian bridge connecting $x$ and $y+n$, so one can write it as $x+B_s=x+\tfrac{y+n-x}{t}s+\tilde{B}_s$ where $(\tilde{B}_s)_{s\in[0,t]}$ is a Brownian bridge connecting $0$ and $0$. This leads to the expression
\[
\G_{t,0}(x,y)=\sum_n q_t(x-y-n) \E \big[e^{\int_0^t \xi(t-s,x+\tfrac{y+n-x}{t}s+\tilde{B}_s)ds-\frac12R(0)t}\,|\,\tilde{B}_t=0\big].
\]
Taking the derivative in $x$ gives 
\begin{equation}\label{e.7203}
\begin{aligned}
&\partial_x \G_{t,0}(x,y)=\sum_n \partial_xq_t(x-y-n)\E \big[e^{\int_0^t \xi(t-s,x+\tfrac{y+n-x}{t}s+\tilde{B}_s)ds-\frac12R(0)t}\,|\,\tilde{B}_t=0\big]\\
&\qquad\qquad\quad+\sum_nq_t(x-y-n)\E \Big[e^{\int_0^t \xi(t-s,x+\tfrac{y+n-x}{t}s+\tilde{B}_s)ds-\frac12R(0)t}\\&\qquad\qquad\qquad\qquad\qquad\qquad\qquad\cdot\textstyle\int_0^t \partial_x \xi(t-s,x+\tfrac{y+n-x}{t}s+\tilde{B}_s)(1-\tfrac{s}{t})ds\,\Big|\,\tilde{B}_t=0\Big],
\end{aligned}
\end{equation}
from which we obtain the moment estimates in \eqref{e.7201} (for $t$ in a bounded interval)
\[
\begin{aligned}
\|\partial_x \G_{t,0}(x,y)\|_p&\leq C\big(G_t(x-y)+\sum_n |\partial_xq_t(x-y-n)|\big)\\
&=C \big(G_t(x-y)+Q_t(x-y)\big).
\end{aligned}
\]

To show \eqref{e.7202}, it remains to take the derivative in $y$ in \eqref{e.7203}. Since the rest of the proof is rather standard, we do not provide it here. 
\end{proof}

\subsection{One-force-one-solution principle: proof of Proposition~\ref{p.burgersofos}}
\label{s.pbofos}
The goal of this section is to prove Proposition~\ref{p.burgersofos}. 

 For any $s<t$, let $\F_{[s,t]}$ be the $\sigma-$algebra generated by the noise $\{\xi(\ell,x)\}_{\ell\in[s,t],x\in\bT}$. Take any measure $\nu$ on $\bT$, consider the solution to the Burgers equation given by the Hopf-Cole transformation 
\[
u(t,x)=\partial_x \log \int_{\bT} \G_{t,0}(x,y)\nu(dy).
\]
We will show
\begin{proposition}\label{p.burgersapp}
For $t\geq 100$ and any $k\leq t-10$, there exists $u_k(t,\cdot)\in C(\bT)$ which is $\F_{[t-k,t]}-$measurable and does not depend on $\nu$, such that for any $p\in[1,\infty)$, 
\begin{equation}\label{e.expmixing}
\sup_{\nu} \EE \|u(t,\cdot)-u_k(t,\cdot)\|_{C(\bT)}^p \leq Ce^{-ck}.
\end{equation}
\end{proposition}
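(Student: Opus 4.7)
The plan is to exploit the exponential mixing of forward polymer endpoint densities on the torus, which is the content of \cite[Theorem 2.3]{GK21} and is the main technical ingredient. I would define the approximation to be the Burgers solution at time $t$ obtained by running from uniform initial data at time $t-k$, namely
\[
u_k(t,x):=\partial_x\log\int_{\bT}\G_{t,t-k}(x,z)\,dz,
\]
which is manifestly $\F_{[t-k,t]}$-measurable and independent of $\nu$. Using the semigroup property of $\G$ and the fact that the factors in the denominators of \eqref{e.forwardbackward} do not depend on $x$, one rewrites
\[
u(t,x) = \partial_x\log\rhof(t,x;t-k,\mu_k),\qquad u_k(t,x)=\partial_x\log\rhof(t,x;t-k,\mathrm{m}),
\]
where $\mu_k\in\cM_1(\bT)$ is the (random) normalization of $z\mapsto\int_{\bT}\G_{t-k,0}(z,y)\nu(dy)$. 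Thus the target estimate reduces to controlling, uniformly over initial measures, the $C(\bT)$-norm of the difference of log-derivatives of two forward endpoint densities with the same random environment but different initial measures at time $t-k$.

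To obtain the exponential decay, I would decompose $[t-k,t] = [t-k,t-1]\cup[t-1,t]$ and use the semigroup identity
\[
\rhof(t,x;t-k,\mu)=\frac{\int_{\bT}\G_{t,t-1}(x,z)\,\rhof(t-1,z;t-k,\mu)\,dz}{\int_{\bT^2}\G_{t,t-1}(x',z)\,\rhof(t-1,z;t-k,\mu)\,dz\,dx'}.
\]
On the long piece, \cite[Theorem 2.3]{GK21} (or the form stated in \cite[Proposition A.1]{GK23}) gives
\[
\sup_{\mu,\mu'\in\cM_1(\bT)}\|\rhof(t-1,\cdot;t-k,\mu)-\rhof(t-1,\cdot;t-k,\mu')\|_{L^p(\Omega\times\bT)}\le C e^{-c(k-1)},
\]
so $f:=\rhof(t-1,\cdot;t-k,\mu_k)$ and $g:=\rhof(t-1,\cdot;t-k,\mathrm{m})$ are exponentially close. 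On the short piece, writing the log-derivative difference as
\[
u(t,x)-u_k(t,x)=\frac{\int_{\bT}\partial_x\G_{t,t-1}(x,z)[f(z)-g(z)]\,dz}{\int_{\bT}\G_{t,t-1}(x,z)f(z)\,dz}+\text{(symmetric remainder)},
\]
and applying H\"older together with the moment bounds of Lemma~\ref{l.bddegreen} for $\partial_x\G_{t,t-1}$ and the negative-moment bounds for $\G_{t,t-1}$ from \cite[Lemma 4.1]{GK21} (adapted to a torus integration against a density bounded uniformly above and below in suitable $L^p$ sense), one converts the $L^p$ closeness of $f-g$ into an $L^p(\bT)$ bound for $u(t,\cdot)-u_k(t,\cdot)$ of order $e^{-ck}$.

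The main obstacle is upgrading the pointwise or $L^p(\bT)$ bound to the $C(\bT)$ norm required by \eqref{e.expmixing}, uniformly in $\nu$. For this I would use a Kolmogorov continuity argument: the same strategy applied to $\partial_x u(t,x)-\partial_x u_k(t,x)$ yields $L^p(\bT)$ bounds of order $e^{-ck}$, using moment control of $\partial_x^2\G_{t,t-1}$ obtained by the same kind of Feynman--Kac computation as in the proof of Lemma~\ref{l.bddegreen} (extended one derivative further). Combining $L^p$ bounds on both $u-u_k$ and $\partial_x(u-u_k)$ and applying Sobolev embedding $W^{1,p}(\bT)\hookrightarrow C(\bT)$ for $p>1$, one obtains $\EE\|u(t,\cdot)-u_k(t,\cdot)\|_{C(\bT)}^p\le C e^{-ck}$, and the supremum over $\nu$ is automatic because all of the bounds above were uniform in the initial measure (only the mixing result from \cite{GK21} depends on $\nu$ and is uniform).
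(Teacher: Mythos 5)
Your construction is correct, but it takes a genuinely different route from the paper. The paper follows \cite[Proposition 4.7]{GK21} almost verbatim: it uses the Markov-chain representation $Z(t,x)=c_{t,\nu}(\omega)\,\E^\omega[\G_{t,t-1}(x,Y_1)]$, imports from \cite[Section 4.5]{GK21} an $\F_{[t-k,t]}$-measurable approximation $X_k(t,\cdot)$ of $X(t,\cdot)=\E^\omega[\G_{t,t-1}(\cdot,Y_1)]$ together with bounds $\EE\sup_\nu\|X-X_k\|_{C(\bT)}^p\le Ce^{-ck}$ (and the same for $\partial_x X-\partial_x X_k$), and then sets $u_k=\frac{\partial_x X_k}{X_k}\1_{C_k}$, where $C_k\in\F_{[t-k,t]}$ is a high-probability event on which $X_k$ is bounded below; the truncation by $\1_{C_k}$ is exactly what keeps the denominator under control while preserving $\F_{[t-k,t]}$-measurability. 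You instead take $u_k$ to be the honest Burgers solution restarted from constant data at time $t-k$, reduce the comparison to two forward endpoint densities $\rhof(t-1,\cdot\,;t-k,\mu_k)$ and $\rhof(t-1,\cdot\,;t-k,\mathrm{m})$, invoke the mixing of \cite[Theorem 2.3]{GK21} on $[t-k,t-1]$, and then do a one-step smoothing over $[t-1,t]$ with the moment bounds of Lemma~\ref{l.bddegreen} and \cite[Lemma 4.1]{GK21}, upgrading to $C(\bT)$ via a bound on $\partial_x(u-u_k)$ and Sobolev embedding. Both arguments share the ``one unit of time near $t$'' smoothing idea and ultimately rest on the same synchronization mechanism from \cite{GK21}, but your choice of $u_k$ avoids the truncation event entirely (negative moments of $\inf_z\G_{t,t-1}(x,z)$ do the work), at the price of only yielding the $\sup_\nu\EE$ form of the bound, whereas the paper's construction gives the stronger $\EE\sup_\nu$ statement (the proposition only claims the former, so this is not a loss here). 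Two small points you should make explicit: since $\mu_k$ is random, apply the uniform-in-initial-measure mixing estimate after conditioning on the $\sigma$-algebra generated by the noise before time $t-k$, using that the propagators on $[t-k,t]$ are independent of it---this is exactly where the weaker $\sup_\nu\EE$ formulation is what your argument delivers; and handle the trivial regimes separately ($p=1$ via H\"older from $p>1$ before the Sobolev embedding, and bounded $k$ by adjusting constants using the uniform moment bounds on $u(t,\cdot)$).
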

Combined with  Lemma~\ref{l.reduction}, the above proposition implies both \eqref{e.7211} and \eqref{e.7212}.

The proof of the above proposition follows closely the proof of \cite[Proposition 4.7]{GK21}, almost verbatim. In the following, we will only sketch it and refer to the detailed arguments in \cite{GK21} when necessary.

Let $Z(t,x)=\int_{\bT} \G_{t,0}(x,y)\nu(dy)$ be the solution to the stochastic heat equation started at $\nu$. Recall that $\G$ is the propagator of the equation on torus. By \cite[Eq. (4.16)]{GK21}, we know that 
\[
Z(t,x)=c_{t,\nu}(\omega) \E^\omega[\G_{t,t-1}(x,Y_1)],
\]
where $c_{t,\nu}(\omega)$ is a random constant that depends on $t,\nu$, $\{Y_k\}_{k\geq 1}$ is a Markov chain running backwards in time, whose transition kernel is constructed  through $\G$ (hence depends on the noise $\xi$), see \cite[Section 4]{GK21}, and $\E^\omega$ is the expectation on the Markov chain. Here we have used $\omega$ to denote the random environment $\xi(\cdot,\cdot)$.

In this way, one can write the solution to the Burgers equation as  
\begin{equation}\label{e.9272}
u(t,x)= \frac{\partial_x Z(t,x)}{Z(t,x)}=\frac{\E^\omega[\partial_x\G_{t,t-1}(x,Y_1)]}{\E^\omega[\G_{t,t-1}(x,Y_1)]},
\end{equation}
so that the random constant $c_{t,\nu}(\omega)$ cancels.

In \cite[Section 4.5]{GK21}, an approximation of $X(t,\cdot):=\E^\omega[\G_{t,t-1}(\cdot,Y_1)]$ was constructed, denoted by $X_k(t,\cdot)$, such that $X_k(t,\cdot)$ is $\F_{[t-k,t]}-$measurable, independent of $\nu$, and 
\begin{equation}\label{e.7213}
\EE \sup_{\nu} \|X(t,\cdot)-X_k(t,\cdot)\|_{C(\bT)}^p \leq Ce^{-ck},
\end{equation}
see \cite[Lemma 4.6]{GK21}. The same proof actually shows that 
\begin{equation}\label{e.7214}
\EE \sup_{\nu} \|\partial_x X(t,\cdot)-\partial_x X_k(t,\cdot)\|_{C(\bT)}^p \leq Ce^{-ck}.
\end{equation}

Now one can construct the approximation of $u(t,\cdot)$ given in \eqref{e.9272}. It was known from \cite[Eq. (4.53)]{GK21} that there exists $\delta\in(0,1)$ and a set $C_k\in \F_{[t-k,t]}$ such that  
\[
\inf_{x\in\bT} X_k(t,x)\geq \delta \inf_{x,y\in\bT} \G_{t,t-1}(x,y), \quad\quad \omega\in C_k, 
\]
and $\PP[C_k^c] \leq (1-\delta)^k$. Define 
\[
u_k(t,x)=\frac{\partial_x X_k(t,x)}{X_k(t,x)}1_{C_k}(\omega).
\]
Following the same proof as \cite[Proposition 4.7]{GK21}, we derive \eqref{e.expmixing}.

\blue{
\begin{remark}\label{r.busemanrho}
In the case of a spacetime white noise, the solution to the stochastic Burgers equation is distribution-valued, and the expression \eqref{e.expmixing} makes no sense. In this case, one may instead consider the increments of the KPZ solution 
\[
B(t,x):=\log \int_{\bT}\G_{t,0}(x,y)\nu(dy)- \log \int_{\bT} \G_{t,0}(0,y)\nu(dy),\quad\quad t>0, x\in\bT.
\]
It can be rewritten using the forward polymer endpoint density as follows
\[
\begin{aligned}
B(t,x)&= \log\frac{\int_{\bT}\G_{t,0}(x,y)\nu(dy)}{\int_{\bT^2} \G_{t,0}(x',y)\nu(dy)dx'}- \log \frac{\int_{\bT} \G_{t,0}(0,y)\nu(dy)}{\int_{\bT^2} \G_{t,0}(x',y)\nu(dy)dx'}\\
&=\log \rhof(t,x;0,\nu)-\log \rhof(t,0;0,\nu).
\end{aligned}
\]
Thus, the same result as Proposition~\ref{p.burgersapp} holds for $B(t,x)$, once we apply \cite[Proposition 4.7]{GK21}.
\end{remark}
}

\section{\blue{Technical lemmas}}

\blue{First, we have the following lemma on the (pathwise) smoothness of the solution to the stochastic heat equation. By the Hopf-Cole transformation, the same holds for  the solution to the stochastic Burgers equation.}

\blue{\begin{lemma}\label{l.jointsmooth}
Fix $t>0$. With probability $1$, the function $Z_\theta(t,x;0)$ is smooth in $(\theta,x)$. 
\end{lemma}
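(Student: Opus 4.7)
The plan is to work from the Feynman--Kac representation
\[
Z_\theta(t,x;0) = \E\!\left[\exp\!\Big(\!\int_0^t \xi(t-s,x+B_s)\,ds - \tfrac{1}{2}R(0)t + \theta(x+B_t)\!\Big)\right],
\]
where $\E$ is expectation with respect to an auxiliary Brownian motion $B$ independent of $\xi$, and to verify that one may differentiate arbitrarily often in $(\theta,x)$ under the expectation sign. Denote the integrand by $F(\theta,x;B,\xi)$ and set $I_\alpha(x;B) := \int_0^t \partial_x^\alpha \xi(t-s,x+B_s)\,ds$ for each multi-index $\alpha$; since $R$ is smooth, each $\partial_x^\alpha \xi$ is itself a well-defined Gaussian noise (white in time, smooth in space) with spatial covariance essentially $(-1)^{|\alpha|}\partial^{2\alpha}R$, so $I_\alpha(x;B)$ makes sense as an It\^o--Walsh integral for each $(x,B)$.

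\textbf{Step 1 (pathwise smoothness of the integrand).} I would first argue that, on a single full-measure event in $\xi$, the field $(x,B)\mapsto I_0(x;B)$ (and analogously each $I_\alpha$) admits a jointly continuous modification in $x$ whose $x$-derivatives are exactly $I_\alpha$. This follows from Kolmogorov's continuity criterion: conditional on $B$, each $I_\alpha(\cdot;B)$ is Gaussian with variance of order $t\cdot |\partial^{2\alpha}R(0)|$, and differences $I_\alpha(x;B)-I_\alpha(x';B)$ have Gaussian moments controlled by $|x-x'|^2$ via the smoothness of $R$. On this full-measure event, $F$ is smooth in $(\theta,x)$ for each Brownian path.

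\textbf{Step 2 (uniform domination).} Each mixed derivative $\partial_\theta^m\partial_x^n F$ is a universal polynomial $P_{m,n}$ in the quantities $\{I_\alpha(x;B)\}_{1\le|\alpha|\le n}$, $\theta$ and $(x+B_t)$, multiplied by $F$ itself. Fix any compact box $K\subset\R^2$. On $K$,
\[
\sup_{(\theta,x)\in K}\big|\partial_\theta^m\partial_x^n F\big| \;\le\; M(B,\xi,K)\cdot \exp\!\Big(C_K|B_t| + \sup_{x\in K_x}|I_0(x;B)|\Big),
\]
where $M(B,\xi,K)$ is a polynomial in $|B_t|$ and in $\sup_{x\in K_x}|I_\alpha(x;B)|$ for $|\alpha|\le n$, and $K_x$ is the projection of $K$ to the $x$-axis.

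\textbf{Step 3 (integrability and interchange).} Conditional on $B$, each $I_\alpha(\cdot;B)$ is a centered Gaussian field on $K_x$ with uniformly bounded variance and H\"older-continuous covariance; by Borell--TIS (or Dudley's entropy bound) $\sup_{x\in K_x}|I_\alpha(x;B)|$ has all exponential moments, with bounds depending only on $t, K_x$ and finitely many derivatives of $R$ at the origin, not on $B$. Combined with $\E e^{c|B_t|}<\infty$ for all $c$, this shows that the dominating function has finite joint $\E\otimes\EE$ expectation. Hence by dominated convergence we may differentiate under $\E$ to any order, and for every fixed $(\theta,x)$ the formal derivatives of $Z_\theta(t,x;0)$ agree almost surely with $\E[\partial_\theta^m\partial_x^n F]$.

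\textbf{Step 4 (a single full-measure event).} The derivatives $\E[\partial_\theta^m\partial_x^n F]$ satisfy moment bounds uniform on every compact $K$, and by the same Gaussian/polynomial chaining, their increments in $(\theta,x)$ are controlled in $L^p(\Omega)$ with H\"older exponents as good as we please. Kolmogorov's continuity criterion therefore produces a single full-probability event on which all these derivatives exist as jointly continuous functions of $(\theta,x)$ and coincide with $\partial_\theta^m\partial_x^n Z_\theta(t,x;0)$. Exhausting $\R^2$ by a countable family of compact boxes and intersecting the resulting events yields the almost sure joint smoothness claimed.

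The main technical obstacle is Step 3: securing Gaussian tail bounds for $\sup_{x\in K_x}|I_\alpha(x;B)|$ with constants independent of $B$, so that the expectation of the dominating function is finite after averaging over $B$ as well. Once Borell--TIS is applied carefully to the conditional Gaussian field, the remainder is routine dominated convergence and a standard Kolmogorov continuity argument.
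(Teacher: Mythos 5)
Your proposal is correct and follows essentially the same route as the paper: differentiate the Feynman--Kac representation under the expectation over $B$, dominate the mixed derivatives uniformly on compacts, and obtain almost-sure finiteness of the dominating function by also averaging over $\xi$ via Gaussian computations. The only differences are technical—where you control $\sup_{x}|I_\alpha(x;B)|$ by Borell--TIS/Dudley for the conditional Gaussian field (whose covariance is indeed independent of $B$), the paper instead bounds the supremum by a fundamental-theorem-of-calculus trick and details only the first $x$-derivative, and your final Kolmogorov-continuity step is not needed once differentiation under the integral holds on a full-measure event.
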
}

\blue{\begin{proof}
Recall that $Z_\theta(t,x;0)$ admits the Feynman-Kac representation
\[
Z_\theta(t,x;0)=\E \left[e^{\int_0^t\xi(t-s,x+B_s)ds-\frac12R(0)t}e^{\theta (x+B_t)}\right].
\]
To show $Z_\theta(t,x;0)$ is smooth in $(\theta,x)$, it suffices to justifying the interchange of taking the expectation and taking the derivative, since, for each realization of $B$, the function 
\[
f(x,\theta,B):=e^{\int_0^t\xi(t-s,x+B_s)ds-\frac12R(0)t}e^{\theta (x+B_t)}
\] is smooth in $(\theta,x)$ (here one can fix the realization of $\xi$). In the following, we  discuss the differentiability in $x$ as an example -- the one in $\theta$ and higher derivatives can be analyzed in the same way. We claim that for any $a<b$,
\begin{equation}\label{e.4233}
\E \sup_{x\in [a,b]}\partial_x f(x,\theta,B)<\infty.
\end{equation}
With \eqref{e.4233}, one can apply the dominated convergence theorem to move $\partial_x$ inside the expectation and derive 
\[
\partial_x Z_\theta(t,x;0)=\E \partial_x f(x,\theta,B).
\] To show \eqref{e.4233}, we write it as 
\[
\partial_x f(x,\theta,B)=\partial_x f(a,\theta,B)+\int_a^x \partial_x^2 f(x',\theta,B)dx',
\]
so it remains to prove
\begin{equation}\label{e.4234}
\E \left[|\partial_x f(a,\theta,B)|+\int_a^b |\partial_x^2 f(x',\theta,B)|dx'\right]<\infty.
\end{equation}
Taking expectation with respect to $\xi$, by a standard calculation involving Gaussian random variables, we have 
\[
\EE\E \left[|\partial_x f(a,\theta,B)|+\int_a^b |\partial_x^2 f(x',\theta,B)|dx'\right]<\infty,
\]
which implies that \eqref{e.4234} and \eqref{e.4233} hold almost surely. The proof is complete.
\end{proof}}

\blue{Next, we summarize some estimates that were used in the paper concerning the endpoint  density of the directed polymer on the cylinder.}

\blue{\begin{lemma}\label{l.estipolymer}
For any $p\in[1,\infty)$, there exists $C(p,R)$ such that  
\begin{equation}\label{e.4241}
\|\sup_{x\in\bT}\rhof(t,x;0,\nu)\|_p+\|\sup_{x\in\bT} \rhof(t,x;0,\nu)^{-1}\|_p \leq C, \quad\quad t>1, \nu\in \mathcal{M}_1(\bT),
\end{equation}
and
\begin{equation}\label{e.4242}
\|\rhof(t,x;0,y)\|_p \leq C G_t(x-y), \quad \|\rhof(t,x;0,y)^{-1}\|_p \leq C G_t(x-y)^{-1}, \quad\quad t\in(0,1].
\end{equation}
If $\nu=\mathrm{m}$, then \eqref{e.4241} holds for all $t>0$. The same results hold for $\rhob$.
\end{lemma}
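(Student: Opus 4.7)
\textbf{Proof proposal for Lemma~\ref{l.estipolymer}.} The plan is to treat the three regimes separately, reducing each to already-established moment estimates for the propagator $\G_{t,s}$ of the stochastic heat equation on the torus.

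For the \emph{large time} case \eqref{e.4241} with $t>1$, I would exploit the semigroup identity
\[
\int_{\bT}\G_{t,0}(x,y)\nu(dy)=\int_{\bT}\G_{t,t-1}(x,z)\,f(z)\,dz,\qquad f(z):=\int_{\bT}\G_{t-1,0}(z,y)\nu(dy),
\]
which factors out the dependence on $\nu$ into the nonnegative function $f$. Dividing numerator and denominator of $\rhof(t,x;0,\nu)$ by $\int_\bT f(z)\,dz$ yields
\[
\rhof(t,x;0,\nu)=\frac{\int_\bT \G_{t,t-1}(x,z)\,\bar f(z)\,dz}{\int_{\bT^2}\G_{t,t-1}(x',z)\,\bar f(z)\,dz\,dx'}
\]
where $\bar f$ is a probability density on $\bT$. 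Bounding the numerator by $\sup_{x,z}\G_{t,t-1}(x,z)$ and the denominator from below by $\inf_{x,z}\G_{t,t-1}(x,z)$, we obtain
\[
\sup_{x\in\bT}\rhof(t,x;0,\nu)\leq \frac{\sup_{x,z}\G_{t,t-1}(x,z)}{\inf_{x,z}\G_{t,t-1}(x,z)},
\]
and an analogous lower bound on the $\inf$. The required $L^p$ moments of $\sup\G_{t,t-1}$ and $(\inf\G_{t,t-1})^{-1}$ over a unit time window are standard (cf.\ \cite[Lemma 4.1]{GK21}), and the final bound follows by H\"older's inequality.

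For the \emph{small time} case \eqref{e.4242} with $\nu=\delta_y$, I would write $\rhof(t,x;0,y)=\G_{t,0}(x,y)/D_t(y)$ where $D_t(y):=\int_\bT \G_{t,0}(x',y)dx'=\int_\R \cZ_{t,0}(x',y)dx'$ equals the total mass of the stochastic heat equation started from $\delta_y$. Using the Feynman--Kac representation $\G_{t,0}(x,y)=\sum_n q_t(x-y-n)X_n$ with $X_n$ a conditional Brownian-bridge expectation, Minkowski and Jensen give $\|\G_{t,0}(x,y)\|_p\leq CG_t(x-y)$ and $\|\G_{t,0}(x,y)^{-1}\|_p\leq CG_t(x-y)^{-1}$ by the same Gaussian-moment computation that appeared in Lemma~\ref{l.bddegreen}. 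The denominator $D_t(y)$ has uniformly bounded positive and negative $L^p$ moments on $t\in(0,1]$: a direct Feynman--Kac computation (with the mass being a stochastic exponential) combined with Jensen's inequality suffices. H\"older's inequality then delivers \eqref{e.4242}.

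For the case $\nu=\mathrm{m}$ with arbitrary $t>0$, the small-time singularity disappears because the numerator $N(t,x):=\int_\bT \G_{t,0}(x,y)\,dy$ is exactly the solution at $(t,x)$ of the SHE with constant initial data $1$. Both $N(t,x)$ and its spatial average $\int_\bT N(t,x')dx'$ have uniformly bounded positive and negative $L^p$ moments for $t$ in any bounded interval of $(0,\infty)$, by the Hopf-Cole formulation and Feynman-Kac (the Gaussian-integrated log-integrand is subexponential). Taking the sup over $x\in\bT$ of the ratio and combining with the $t>1$ bound from the first paragraph handles all $t>0$.

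The only slightly delicate point is the negative-moment bound on $D_t(y)$ for $t\in(0,1]$ in the Dirac case. The integrand $\cZ_{t,0}(x',y)$ is concentrated near $x'=y\bmod 1$ for small $t$, so Jensen's inequality against the Gaussian weight $q_t(x'-y-n)$ must be applied carefully to avoid picking up a divergent factor; I would localize to $|x'-y|\leq \sqrt{t}$ and bound the conditional Brownian-bridge exponential uniformly from below, as in the proofs of \cite[Lemma 4.1]{GK21} and \cite[Proposition A.1]{GK23}, which I will cite for the details.
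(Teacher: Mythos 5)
Your proposal is correct and follows essentially the same route as the paper: for $t>1$ you apply the Chapman--Kolmogorov/semigroup decomposition over a fixed window (the paper uses $[t-\tfrac12,t]$, you use $[t-1,t]$, which is immaterial) and bound the ratio by $\sup\G/\inf\G$ via \cite[Lemma 4.1]{GK21}, and for $t\in(0,1]$ you use the pointwise moment bounds of Lemma~\ref{l.bddegreen} for the numerator together with uniform positive and negative moments of the total mass in the denominator, exactly as in the paper. The only difference is that your final ``delicate point'' is not actually delicate: the negative moments of $D_t(y)$ follow directly from Jensen's inequality applied to the Feynman--Kac representation (as the paper notes, a proof as in Lemma~\ref{l.bddegreen} or \cite[Lemma B.2]{GK21} suffices), so no localization to $|x'-y|\leq\sqrt t$ is needed.
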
}

\begin{proof}
\blue{First, for $t>1$, we write the forward endpoint density as 
\[
\rhof(t,x;0,\nu)=\frac{\int_{\bT} \G_{t,t-\frac12}(x,x')\rhof(t-\frac12,x';0,\nu)dx'}{\int_{\bT^2} \G_{t,t-\frac12}(x'',x')\rhof(t-\frac12,x';0,\nu)dx'dx''},
\]
which implies that 
\[
\begin{aligned}
\inf_{x,x'\in\bT} \G_{t,t-\frac12}(x,x')&\inf_{x',x''\in\bT}\G_{t,t-\frac12}^{-1}(x'',x')\\
&\leq  \rhof(t,x;0,\nu) \leq \sup_{x,x'\in\bT} \G_{t,t-\frac12}(x,x') \sup_{x',x''\in\bT}\G_{t,t-\frac12}^{-1}(x'',x').
\end{aligned}
\]
Applying \cite[Lemma 4.1]{GK21}, we derive \eqref{e.4241}.}

\blue{To show \eqref{e.4242}, we know  from  \eqref{e.7201} that $\|\G_{t,0}(x,y)^{-1}\|_p \leq C G_t(x-y)^{-1}$. The same proof gives $\|\G_{t,0}(x,y)\|_p \leq C G_t(x-y)$. 
%
Recall that $\rhof$ is given by
\begin{equation}\label{e.4243}
\rhof(t,x;0,y)=\frac{\G_{t,0}(x,y)}{\int_{\bT} \G_{t,0}(x',y)dx'}.
\end{equation}
For the denominator, a similar proof as in Lemma~\ref{l.bddegreen} (alternatively, applying \cite[Lemma B.2]{GK21})  leads to 
\[
\|\int_{\bT} \G_{t,0}(x',y)dx'\|_p+\|(\int_{\bT} \G_{t,0}(x',y)dx')^{-1}\|_p \leq C, \quad\quad t\in(0,1].
\]
Thus, one can apply H\"older inequality to the r.h.s.\ of \eqref{e.4243}  to conclude \eqref{e.4242}. To prove \eqref{e.4241} in the case of $\nu=\mathrm{m}$ and $t\in(0,1]$, we  apply the above estimates again. The proof is complete.}
\end{proof}

\blue{In the end, we present a proof of \eqref{e.4245}, which relates $\partial_\theta \log Z_\theta$ to the quenched mean of the polymer endpoint in the white noise setting.}

\blue{\begin{lemma}
In the white noise case, we have 
\[
\partial_\theta\log Z_{\theta}(0,x;-T)=x+\theta T+\psi_\theta(0,x;-T),
\]
with 
\[
\psi_{\theta}(0,x;-T)=\frac{\int_{\R} \cZ^\theta_{0,-T}(x,y)(y-x)dy}{\int_{\R} \cZ^\theta_{0,-T}(x,y)dy}.
\]
\end{lemma}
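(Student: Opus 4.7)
The plan is to establish a pathwise shear identity between the propagators $\cZ$ and $\cZ^\theta$, substitute it into the quotient formula for $\partial_\theta \log Z_\theta$, and then change variables to recognize the quenched mean $\psi_\theta$. From \eqref{e.greenre} one directly computes
\[
\partial_\theta \log Z_\theta(0,x;-T) = \frac{\int_\R \cZ_{0,-T}(x,y) \, y \, e^{\theta y} \, dy}{\int_\R \cZ_{0,-T}(x,y) \, e^{\theta y} \, dy},
\]
where differentiation under the integral is justified by the Gaussian-type decay of $\cZ_{0,-T}(x,y)$ in $|y|$, which dominates the exponential weight $e^{\theta y}$.

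The key input I would establish is the pathwise identity
\[
\cZ_{0,-T}(x,y) \, e^{\theta y} = e^{\theta x + \theta^2 T/2} \, \cZ^\theta_{0,-T}(x, y - \theta T),
\]
which encodes a Cameron--Martin shift at the level of the stochastic heat propagator. In the smooth noise setting this follows from a Girsanov calculation analogous to Lemma~\ref{l.reduction}: starting from the Feynman--Kac formula for $\cZ_{0,-T}(x,y)$, the shift $B_t \mapsto B_t + \theta t$ converts $\xi$ into $\xi^\theta$ and produces exactly the claimed exponential prefactor, together with a shift of the endpoint by $\theta T$. To transfer this to the white noise case, I would mollify $\xi$ at spatial scale $\eps$, apply the smooth identity to the mollified propagators $\cZ^{(\eps)}$ and $\cZ^{\theta,(\eps)}$, and pass to the limit using the standard $L^p$-convergence of mollified SHE propagators to the white-noise propagators.

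Once the shear identity is in hand, the remaining steps are routine. The common prefactor $e^{\theta x + \theta^2 T/2}$ cancels between numerator and denominator, and after the substitution $u = y - \theta T$ both integrals reduce to moments of $\cZ^\theta_{0,-T}(x,u)$. Decomposing $u + \theta T = (u - x) + x + \theta T$ in the numerator then yields
\[
\partial_\theta \log Z_\theta(0,x;-T) = x + \theta T + \frac{\int_\R \cZ^\theta_{0,-T}(x,u) \, (u - x) \, du}{\int_\R \cZ^\theta_{0,-T}(x,u) \, du} = x + \theta T + \psi_\theta(0,x;-T).
\]

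The main obstacle is the passage to the white-noise limit in the shear identity. Although mollified SHE propagators converge to the white-noise propagator in $L^p$ on compact sets, the integrals $\int_\R \cZ^{(\eps)}_{0,-T}(x,y) \, e^{\theta y} \, dy$ involve a non-compact region together with an exponential weight, so one needs uniform-in-$\eps$ tail estimates on the mollified propagators (essentially Gaussian upper bounds with possibly logarithmic corrections) to justify swapping the $\eps\to 0$ limit with the spatial integration. These uniform bounds are standard in the SHE literature on $\R$, but invoking them carefully is the only delicate point in the proof.
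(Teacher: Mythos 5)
Your proposal is correct and follows essentially the same route as the paper: mollify the noise spatially, use the Feynman--Kac formula together with a Cameron--Martin shift to convert $\xi$ into the sheared noise $\xi^\theta$ (producing the prefactor $e^{\theta x+\theta^2T/2}$ and the endpoint shift by $\theta T$), rearrange, and send $\eps\to0$. The only difference is organizational: you package the shift as a pointwise kernel identity $\cZ_{0,-T}(x,y)e^{\theta y}=e^{\theta x+\theta^2T/2}\cZ^\theta_{0,-T}(x,y-\theta T)$ and pass to the limit there, whereas the paper applies the shift directly inside the integrated numerator and denominator and takes $\eps\to0$ only at the end; both hinge on the same mollification limit that you correctly flag as the one delicate point.
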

\begin{proof}
Recall that \[
\partial_\theta \log Z_{\theta}(0,x;-T)
=\frac{\int_{\R} \cZ_{0,-T}(x,y)e^{\theta y}ydy}{\int_{\R}\cZ_{0,-T}(x,y)e^{\theta y}dy}.
\]
For any $\eps>0$, let $\xi_\eps$ be a spatial mollification of (the white noise) $\xi$ on the scale of $\eps$, and $\cZ_{t,s,\eps}(x,y)$ be the propagator of the SHE with noise $\xi_\eps$. Consider the r.h.s.\ of the above display with $\cZ$ replaced by $\cZ_\eps$:
\[
\frac{\int_{\R} \cZ_{0,-T,\eps}(x,y)e^{\theta y}ydy}{\int_{\R}\cZ_{0,-T,\eps}(x,y)e^{\theta y}dy}.
\]
For the numerator, through the Feynman-Kac formula, we have 
\[
\int_{\R} \cZ_{0,-T,\eps}(x,y)e^{\theta y}ydy=\E\left[ e^{\int_0^T \xi_\eps(-s,x+B_s)ds-\frac12R_\eps(0)T}e^{\theta (x+B_T)}(x+B_T)\right],
\]
where $R_\eps$ is the spatial covariance function of $\xi_\eps$. By the same argument as in Lemma~\ref{l.reduction} (applying Cameron-Martin theorem), the r.h.s.\ is equal to 
\[
\begin{aligned}
e^{\frac12\theta^2 T}\E\left[ e^{\int_0^T \xi_\eps^\theta(-s,x+B_s)ds-\frac12R_\eps(0)T}e^{\theta x}(x+\theta T+B_T)\right].
\end{aligned}
\]
The same discussion applies to the denominator and in the end we obtain 
\[
\begin{aligned}
\frac{\int_{\R} \cZ_{0,-T,\eps}(x,y)e^{\theta y}ydy}{\int_{\R}\cZ_{0,-T,\eps}(x,y)e^{\theta y}dy}&=x+\theta T+\frac{\E\left[ e^{\int_0^T \xi_\eps^\theta(-s,x+B_s)ds-\frac12R_\eps(0)T} B_T\right]}{\E\left[ e^{\int_0^T \xi_\eps^\theta(-s,x+B_s)ds-\frac12R_\eps(0)T}\right]}\\
&=x+\theta T+\frac{\int_{\R} \cZ_{0,-T,\eps}^\theta (x,y)(y-x)dy}{\int_{\R}\cZ_{0,-T,\eps}^\theta(x,y) dy},
\end{aligned}
\]
where $\cZ_\eps^\theta$ is the propagator of the SHE driven by $\xi_\eps^\theta$.
Sending $\eps\to0$, the proof is complete.
\end{proof}}


 \end{document}